\theoremstyle{plain}
\newtheorem{thm}{Theorem}
\newtheorem{lem}{Lemma}[section]
\newtheorem{cor}[lem]{Corollary}
\newtheorem{prop}[lem]{Proposition}
\theoremstyle{definition}
\newtheorem{defi}[lem]{Definition}
\newtheorem{rem}[lem]{Remark}
\newtheorem{ex}[lem]{Example}
\newcommand{\R}{\mathbb{R}}
\newcommand{\Z}{\mathbb{Z}}
\newcommand{\N}{\mathbb{N}}
\newcommand{\C}{\mathbb{C}}
\newcommand{\qH}{\mathbb{H}}
\newcommand{\K}{\mathbb{K}}
\newcommand{\I}{\mathbb{I}}
\newcommand{\X}{\mathcal{X}}
\newcommand{\Y}{\mathcal{Y}}
\newcommand{\G}{\mathcal{G}}
\newcommand{\E}{\mathcal{E}}
\newcommand*{\EnsQuot}[2]%
{\ensuremath{%
    #1/\!\raisebox{-.65ex}{\ensuremath{{#2}}}}}
\newcommand{\Top}{\rule{0pt}{3ex}}
\mathchardef\za="710B  
\mathchardef\zb="710C  
\mathchardef\zg="710D  
\mathchardef\zd="710E  
\mathchardef\zve="710F 
\mathchardef\zz="7110  
\mathchardef\zh="7111  
\mathchardef\zvy="7112 
\mathchardef\zi="7113  
\mathchardef\zk="7114  
\mathchardef\zl="7115  
\mathchardef\zm="7116  
\mathchardef\zn="7117  
\mathchardef\zx="7118  
\mathchardef\zp="7119  
\mathchardef\zr="711A  
\mathchardef\zs="711B  
\mathchardef\zt="711C  
\mathchardef\zu="711D  
\mathchardef\phi="711E 
\mathchardef\zq="711F  
\mathchardef\zc="7120  
\mathchardef\zw="7121  
\mathchardef\ze="7122  
\mathchardef\zy="7123  
\mathchardef\zf="7124  
\mathchardef\zvr="7125 
\mathchardef\zvs="7126 
\mathchardef\zf="7127  
\mathchardef\zG="7000  
\mathchardef\zD="7001  
\mathchardef\zY="7002  
\mathchardef\zL="7003  
\mathchardef\zX="7004  
\mathchardef\zP="7005  
\mathchardef\zS="7006  
\mathchardef\zU="7007  
\mathchardef\zF="7008  
\mathchardef\zW="700A  
\newcommand{\degr}{\widetilde}
\DeclareMathOperator{\tr}{tr}
\DeclareMathOperator{\gtr}{\G\!\tr}
\DeclareMathOperator{\gdet}{\G\!\!\det}
\DeclareMathOperator{\gdetinv}{\G\!\!\det\,\!\!^{-1}}
\DeclareMathOperator{\Ddet}{D\!\det}
\DeclareMathOperator{\ber}{Ber}
\DeclareMathOperator{\gber}{\G\!\ber}
\DeclareMathOperator{\qi}{i}
\DeclareMathOperator{\qj}{j}
\DeclareMathOperator{\qk}{k}
\newcommand{\op}[1]{\!\!\mathop{\rm ~#1}\nolimits}
\newcommand{\be}{\begin{equation}}
\newcommand{\ee}{\end{equation}}
\newcommand{\bea}{\begin{eqnarray}}
\newcommand{\eea}{\end{eqnarray}}
\newcommand{\beas}{\begin{eqnarray*}}
\newcommand{\eeas}{\end{eqnarray*}}
\newcommand{\la}{\langle}
\newcommand{\ra}{\rangle}
\newcommand{\lp}{\left(}
\newcommand{\rp}{\right)}
\newcommand{\lpq}{\left[}
\newcommand{\rpq}{\right]}
\def\g{\gamma}
\def\G{\Gamma}
\font\black=cmbx10 \font\sblack=cmbx7 \font\ssblack=cmbx5 \font\blackital=cmmib10  \skewchar\blackital='177
\font\sblackital=cmmib7 \skewchar\sblackital='177 \font\ssblackital=cmmib5 \skewchar\ssblackital='177
\font\sanss=cmss12 \font\ssanss=cmss8 scaled 900 \font\sssanss=cmss8 scaled 600 \font\blackboard=msbm10
\font\sblackboard=msbm7 \font\ssblackboard=msbm5 \font\caligr=eusm10 \font\scaligr=eusm7 \font\sscaligr=eusm5
\font\bsymb=cmsy10 scaled\magstep2
\def\all#1{\setbox0=\hbox{\lower1.5pt\hbox{\bsymb
       \char"38}}\setbox1=\hbox{$_{#1}$} \box0\lower2pt\box1\;}
\def\exi#1{\setbox0=\hbox{\lower1.5pt\hbox{\bsymb \char"39}}
       \setbox1=\hbox{$_{#1}$} \box0\lower2pt\box1\;}
\def\sss#1{{\fam\ssfam\relax#1}}
\def\pmb#1{\setbox0\hbox{${#1}$} \copy0 \kern-\wd0 \kern.2pt \box0}
\def\pmbb#1{\setbox0\hbox{${#1}$} \copy0 \kern-\wd0
      \kern.2pt \copy0 \kern-\wd0 \kern.2pt \box0}
\def\pmbbb#1{\setbox0\hbox{${#1}$} \copy0 \kern-\wd0
      \kern.2pt \copy0 \kern-\wd0 \kern.2pt
    \copy0 \kern-\wd0 \kern.2pt \box0}
\def\pmxb#1{\setbox0\hbox{${#1}$} \copy0 \kern-\wd0
      \kern.2pt \copy0 \kern-\wd0 \kern.2pt
      \copy0 \kern-\wd0 \kern.2pt \copy0 \kern-\wd0 \kern.2pt \box0}
\def\pmxbb#1{\setbox0\hbox{${#1}$} \copy0 \kern-\wd0 \kern.2pt
      \copy0 \kern-\wd0 \kern.2pt
      \copy0 \kern-\wd0 \kern.2pt \copy0 \kern-\wd0 \kern.2pt
      \copy0 \kern-\wd0 \kern.2pt \box0}
\begin{document}

\title[$(\Z_2)^n$-graded Trace and Berezinian]
{Higher Trace and Berezinian\\\vskip0.2cm of\\\vskip0.2cm Matrices
over a Clifford Algebra}

\author{Tiffany Covolo
\hskip 1cm Valentin Ovsienko \hskip 1cm Norbert Poncin}
\address{
Tiffany Covolo, Universit\'e du Luxembourg, UR en math\'ematiques,
6, rue Richard Coudenhove-Kalergi, L-1359 Luxembourg,
Grand-Duch\'e de Luxembourg}
\address{
Valentin Ovsienko, CNRS, Institut Camille Jordan, Universit\'e
Claude Bernard Lyon~1, 43, boulevard du 11 novembre 1918, F-69622
Villeurbanne cedex, France}
\address{
Norbert Poncin, Universit\'e du Luxembourg, UR en math\'ematiques,
6, rue Richard Coudenhove-Kalergi, L-1359 Luxembourg,
Grand-Duch\'e de Luxembourg}\email{tiffany.covolo@uni.lu,
ovsienko@math.univ-lyon1.fr, norbert.poncin@uni.lu}

\begin{abstract}
We define the notions of trace, determinant and, more generally,
Berezinian of matrices over a $(\Z_2)^n$-graded commutative
associative algebra~$A$. The applications include a new approach
to the classical theory of matrices with coefficients in a
Clifford algebra, in particular of quaternionic matrices. In a
special case, we recover the classical Dieudonn\'e determinant of
quaternionic matrices, but in general our quaternionic determinant
is different. We show that the graded determinant of purely even
$(\Z_2)^n$-graded matrices of degree~$0$ is polynomial in its
entries. In the case of the algebra $A=\qH$ of quaternions, we
calculate the formula for the Berezinian in terms of a product of
quasiminors in the sense of Gelfand, Retakh, and Wilson. The
graded trace is related to the graded Berezinian (and determinant)
by a $(\Z_2)^n$-graded version of Liouville's formula.
\end{abstract}

\vspace{2mm}
\noindent \subjclass[2010]{17A70, 58J52, 58A50, 15A66,
11R52}\medskip \noindent\keywords{Clifford linear algebra,
quaternionic determinants, $(\Z_2)^n$-graded commutative algebra}
\maketitle \thispagestyle{empty}

\tableofcontents

\section{Introduction}

Linear algebra over quaternions is a classical subject. Initiated
by Hamilton and Cayley, it was further developed by Study
\cite{Stu} and Dieudonn\'e \cite{Die}, see \cite{Asl} for a
survey. The best known version of quaternionic determinant is due
to Dieudonn\'e, it is far of being elementary and still attracts a
considerable interest, see \cite{GRW03}. The Dieudonn\'e
determinant is not related to any notion of trace. To the best of
our knowledge, the concept of trace is missing in the existing
theories of quaternionic matrices.
\medskip

The main difficulty of any theory of matrices over quaternions,
and more generally over Clifford algebras, is related to the fact
that these algebras are not commutative. It turns out however,
that the classical algebra $\qH$ of quaternions can be understood
as a graded-commutative algebra. It was shown in \cite{Lyc},
\cite{AM}, \cite{AM1} that $\qH$ is a graded commutative algebra
over the Abelian group $(\Z_2)^2=\Z_2\times\Z_2$ (or over the even
part of $(\Z_2)^3$, see \cite{MO}). Quite similarly, every
Clifford algebra with $n$ generators is $(\Z_2)^n$-graded
commutative \cite{AM1} (furthermore, a Clifford algebra is
understood as even $(\Z_2)^{n+1}$-graded commutative algebra in
\cite{OM}). This viewpoint suggests a natural approach to linear
algebra over Clifford algebras as generalized Superalgebra.
\medskip

Geometric motivations to consider $(\Z_2)^n$-gradings come from
the study of higher vector bundles \cite{GR07}. If $E$ denotes a
vector bundle with coordinates $(x,\xi)$, a kind of universal
Legendre transform
$$T^*E\ni (x,\xi,y,\eta)\leftrightarrow (x,\eta,y,-\xi)\in
T^*E^*$$ provides a natural and rich $(\Z_2)^2$-degree $((0,0),
(1,0), (1,1), (0,1))$ on $T^*[1]E[1]$. Multigraded vector bundles
give prototypical examples of $(\Z_2)^n$-graded manifolds.
\medskip

Quite a number of geometric structures can be encoded in
supercommutative algebraic structures, see e.g., \cite{GKP11b},
\cite{GKP10a}, \cite{GKP10b}, \cite{GP04}. On the other hand,
supercommutative algebras define supercommutative geometric
spaces. It turns out, however, that the classical $\Z_2$-graded
commutative algebras $\op{Sec}(\wedge E^*)$ of vector bundle forms
are far from being sufficient. For instance, whereas Lie
algebroids are in 1-to-1 correspondence with homological vector
fields of split supermanifolds $\op{Sec}(\wedge E^*)$, the
supergeometric interpretation of Loday algebroids \cite{GKP11a}
requires a $\Z_2$-graded commutative algebra of non-Grassmannian
type, namely the shuffle algebra $\mathcal{D}(E)$ of specific
multidifferential operators. However, not only other types of
algebras, but also more general grading groups must be
considered.\medskip

Let us also mention that classical Supersymmetry and
Supermathematics are not completely sufficient for modern physics
(i.e., the description of anyons, paraparticles).
\medskip

All the aforementioned problems are parts of our incentive to
investigate the basic notions of linear algebra over
a~$(\Z_2)^n$-graded commutative unital associative algebra~$A$. We
consider the space $\sss{M}(\mathbf{r};A)$ of matrices with
coefficients in $A$ and introduce the notions of graded trace and
Berezinian (in the simplest case of purely even matrices we will
talk of the determinant). We prove an analog of the Liouville
formula that connects both concepts. Although most of the results
are formulated and proved for arbitrary $A$, our main goal is to
develop a new theory of matrices over Clifford algebras and, more
particularly, over quaternions.
\medskip

Our main results are as follows:\medskip

\begin{itemize}
\item There exists a unique homomorphism of graded $A$-modules and
graded Lie algebras
$$\op{\zG tr}: \sss{M}(\mathbf{r};A)\to A\;,$$
defined for arbitrary matrices with coefficients in $A$.\smallskip
\item There exists a unique map
$$\gdet:\sss{M}^0(\mathbf{r}_0;A)\to A^0\;,$$
defined on purely even homogeneous matrices of degree 0 with
values in the commutative subalgebra $A^0\subset{}A$ consisting of
elements of degree 0 and characterized by three properties: a)
$\gdet$ is multiplicative, b) for a block-diagonal matrix $\gdet$
is the product of the determinants of the blocks, c) $\gdet$ of a
lower (upper) unitriangular matrix equals 1. In the case $A=\qH$,
the absolute value of $\gdet$ coincides with the classical
Dieudonn\'e determinant.\smallskip \item There exists a unique
group homomorphism
$$\gber:\sss{GL}^0(\mathbf{r};A)\to (A^0)^{\times}\;,$$
defined on the group of invertible homogeneous matrices of degree
$0$ with values in the group of invertible elements of $A^0$,
characterized by properties similar to a), b), c).\smallskip \item
The graded Berezinian is connected with the graded trace by a
$(\Z_2)^n$-graded version of Liouville's formula
$$\gber(\exp(\ze X))=\exp(\op{\zG tr}(\ze X))\;,$$ where $\ze$
is a nilpotent formal parameter of degree 0 and $X$ a graded
matrix.\smallskip \item For the matrices with coefficients in a
Clifford algebra, there exists a unique way to extend the graded
determinant to homogeneous matrices of degree different from zero,
if and only if the total matrix dimension$\,|\,\mathbf{r}|$
satisfies the condition
$$|\,\mathbf{r}|=0,1\quad(\op{mod} 4).$$
In the case of matrices over $\qH$, this graded determinant
differs from that of Dieudonn\'e.
\end{itemize}

\medskip

The reader who wishes to gain a quick and straightforward insight
into some aspects of the preceding results, might envisage having
a look at Section \ref{ExSect} at the end of this paper, which can
be read independently.\medskip

Our main tools that provide most of the existence results and
explicit formul{\ae} of graded determinants and graded
Berezinians, are the concepts of quasideterminants and
quasiminors, see~\cite{GGRW05} and references therein.
\medskip

Let us also mention that in the case of matrices over a Clifford
algebra, the restriction for the dimension of the $A$-module,
$|\,\mathbf{r}|=0,1$ (mod 4), provides new insight into the old
problem initiated by Arthur Cayley, who considered specifically
two-dimensional linear algebra over quaternions. It follows that
Cayley's problem has no solution, at least within the framework of
graded algebra adopted in this paper. In particular, the notion of
determinant of a quaternionic $(2\times2)$-matrix related to a
natural notion of trace does not exist.
\medskip

Basic concepts of $(\Z_2)^n$-graded Geometry based on the linear
algebra developed in the present paper are being studied in a
separate work; applications to quaternionic functions and to
Mathematical Physics are expected. We also hope to investigate the
cohomological nature of $(\Z_2)^n$-graded Berezinians, as well as
the properties of the characteristic polynomial, see \cite{KV} and
references therein.
\medskip

 Another method to treat the problem of generalizing superalgebras and related notions, alternative to the one presented in this paper and which makes use of category theory, is being studied in a separate work. This approach follows from results by Scheunert in \cite{Sch} (in the Lie algebra setting) and Nekludova (in the commutative algebra setting).
 An explicit description of the results of the latter first appeared in \cite{ptrzvk}, and can also be found in \cite{SoS}.

\section{$(\Z_2)^n$-Graded Algebra}\label{ModSect}

In this section we fix terminology and notation used throughout this paper.
Most of the definitions extend well-known definitions of usual superlagebra \cite{Lei80},
see also \cite{VAR}.\medskip

\subsection{General Notions}

Let $(\zG,+)$ be an Abelian group endowed
with a symmetric bi-additive map
$$\la \;,\;\ra:\zG\times\zG\to\Z_2\,.$$
That is,
$$\la\g,\g'\ra=\la\g',\g\ra
\qquad\hbox{and} \qquad
\la\g+\g',\g''\ra=\la\g,\g''\ra+\la\g',\g''\ra \,.
$$
The \textit{even} subgroup $\G_0$ consists of elements $\g\in\G$
such that $\la\g,\g\ra=0$.
One then has a splitting
$$
\G=\G_0\,\cup\,\G_1 \,,
$$
where $\G_1$ consists of \textit{odd} elements $\g\in\G$
such that $\la\g,\g\ra=1$.
Of course, $\G_1$ is not a subgroup of $\G$. \medskip

A basic example is the additive group
$(\Z_2)^n$, $n\in \mathbb{N}$, equipped with the standard scalar product $\la \;,\;\ra$
of $n$-vectors, defined over $\Z_2$,  Section \ref{CliS}.\medskip

A {\it graded vector space} is a direct sum
$$V=\bigoplus_{\zg\in\zG}V^{\zg}$$
of vector spaces $V^{\zg}$ over a
commutative field $\K$ (that we will always assume of characteristic 0).
A graded vector space is always a direct sum:
$$
V=V_0\oplus{}V_1
$$
of its even subspace $V_0=\bigoplus_{\zg\in\zG_0}V^{\zg}$ and its odd
subspace $V_1=\bigoplus_{\zg\in\zG_1}V^{\zg}$.
\medskip

If $V$ and $W$ are
graded vector spaces, then one has:
$$
\sss{Hom}_{\K}(V,W)=\bigoplus_{\zg\in\zG}\sss{Hom}_{\K}^{\zg}(V,W) \,,
$$
where $\sss{Hom}^{\zg}_{\K}(V,W)$
(or simply $\sss{Hom}^{\zg}(V,W)$) the vector space of
$\K$-linear maps  of weight $\zg$
$$
\ell:V\to W,
\qquad
\ell(V^{\zd})\subset W^{\zd+\zg}\,.
$$
We also use the standard notation $\sss{End}_{\K}(V):=\sss{Hom}_{\K}(V,V)$.
\medskip

A {\it graded algebra} is an algebra $A$ which has a structure of a graded vector space $A$
such that the operation of multiplication respects the grading:
$$A^{\zg}A^{\zd}\subset A^{\zg+\zd}\,.$$
If $A$ is associative (resp., associative and unital), we
call it a {\it graded associative algebra} (resp., {\it graded
associative unital algebra}). In this case, the operation of multiplication is denoted by ``$\cdot$''.
A graded associative algebra $A$ is called
\textit{graded commutative} if, for any homogeneous elements $a,b\in A$, we have
\begin{equation}
\label{GCon}
b\cdot a=(-1)^{\la \tilde a,\tilde b\ra}a\cdot b\,.
\end{equation}
Here and below $\tilde a\in\G$ stands for the degree of $a$, that is $a\in A^{\tilde a}$.
Note that graded commutative algebras are also known in the literature
under the name of ``color commutative'' algebras. \medskip

Our main examples of graded commutative algebras
are the classical Clifford algebras equipped with $(\Z_2)^n$-grading,
see Section \ref{CliS}. \medskip

A graded algebra $A$ is called a  {\it graded Lie algebra}
if it is graded anticommutative and satisfies the graded Jacobi
identity.
The operation of multiplication is then denoted by $[\;,\;]$.
The identities read explicitly:
 $$
 \begin{array}{rcl}
 [a,b]&=&-(-1)^{\la \tilde a,\tilde b\ra}[b,a]\,,\\[4pt]
 [a,[b,c]]&=&[[a,b],c]+(-1)^{\la \tilde a,\tilde
b\ra}[b,[a,c]]\,.
\end{array}$$
Graded Lie algebras are often called ``color Lie algebras'', see
\cite{Sch}.
The main example of a graded Lie algebra is the space $\sss{End}_{\K}(V)$
equipped with the commutator:
\begin{equation}
\label{GCom}
[X,Y]=X\circ{}Y - (-1)^{\la\widetilde{X},\widetilde{Y}\ra}\,Y\circ{}X\,,
\end{equation}
for homogeneous $X,Y\in\sss{End}_{\K}(V)$ and extended by linearity.
\medskip

A graded vector space $M$ is called a {\it graded (left) module}
over a graded commutative algebra~$A$ if
there is a $\K$-linear
map $\zl:A\to \sss{End}_{\K}(M)$ of weight $0\in\zG$ that
satisfies
$$\zl(a)\circ\zl(b)=\zl(a\cdot b)\quad\text{and}\quad
\zl(1_A)=\op{id}_M\,,$$
where $a,b\in A$ and where $1_A$ denotes the unit of $A$;
we often write $am$ instead of $\zl(a)(m)$, for $a\in A,\,m\in M$.
The condition of weight 0 for the map $\zl$ reads:
$$
\widetilde{am}= \widetilde{a}+ \widetilde{m}\,.
$$
As usual we inject $\K$ into $A$ by means of $K\ni
k\rightarrowtail k 1_A\in A$, so that $\zl(k)(m)=km$, $m\in M$.
 Graded
right modules over $A$ are defined similarly. Since $A$ is graded
commutative, any graded left $A$-module structure on $M$ defines a
right one,
$$ma:=(-1)^{\la\tilde m,\tilde a\ra}am\,,$$
and vice versa. Hence, we identify both concepts and speak just about graded
modules over a graded commutative algebra, as we do in the
commutative and supercommutative contexts.
Let us mention that one can in general realize a graded left module as a bimodule in many other ways, e.g. by setting
$$ ma := (-1)^{\la\tilde m +\zg,\tilde a\ra}am\,, $$
for any non-zero degree $\zg\in\zG$.
\medskip

Let now $M$ and $N$ be two graded $A$-modules.
Denote by $\sss{Hom}_A^{\zg}(M,N)$ the subspace of $\sss{Hom}^{\zg}_{\K}(V,W)$
consisting of $A$-linear maps $\ell:M\to N$ of weight $\zg$ that is
$$\ell(am)=
(-1)^{\la\zg,\tilde a\ra}a\,\ell(m)
\quad
\text{or, equivalently,}\quad
\ell(ma)=\ell(m)a$$
$$ \mbox{ and } \qquad \ell(M^{\zg'})\subset{}N^{\zg+\zg'}\,.$$
The space
$$\sss{Hom}_{A}(M,N)=\bigoplus_{\zg\in\zG}\sss{Hom}_A^{\zg}(M,N)$$
carries itself an obviously defined graded $A$-module structure.
The space
$$
\sss{End}_{A}(M):=\sss{Hom}_{A}(M,M)
$$
is a graded Lie algebra
with respect to the commutator (\ref{GCom}).
\medskip

Graded $A$-modules and $A$-linear maps of weight 0 form a category
{\tt Gr}$_\zG${\tt Mod}$_A$. Hence, the categorical Hom is the
vector space $\op{Hom}(M,N)=\sss{Hom}_A^0(M,N)$. \medskip

As $A$ is a
graded module over itself, the internal or inner (to {\tt Gr}$_\zG${\tt
Mod}$_A$) \sss{Hom} provides the notion of \textit{dual} module
$M^*=\sss{Hom}_A(M,A)$ of a graded $A$-module $M$. Let us also
mention that the categorical $\op{Hom}$ sets corresponding to
graded associative algebras (resp., graded associative unital
algebras, graded Lie algebras) are defined naturally as the sets
of those $\K$-linear maps of weight $0$ that respect the
multiplications (resp., multiplications and units,
brackets).\medskip

A {\it free graded $A$-module} is a graded $A$-module $M$ whose
terms $M^{\zg}$ admit a basis
$$B^{\zg}=(e_1^{\zg},\ldots,e_{r}^{\zg})\,.$$
Assume that the Abelian group $\zG$ is
of finite order $p$, and fix a basis $\{\zg_1,\ldots,\zg_{p}\}$.
Assume also that $M$ has a finite rank:
$\mathbf{r}=(r_{1},\ldots, r_{p})$, where
$r_{u}\in\mathbb{N}$ is the cardinality of
$B^{\zg_u}$. If $N$ is another free graded $A$-module of finite rank
$\mathbf{s}=(s_1,\ldots, s_{p})$ and basis
$({e'}_1^{\zg_k},\ldots,{e'}_{s_{k}}^{\zg_k})_k$, then every homogeneous
$A$-linear map $\ell\in\sss{Hom}_{A}(M,N)$ is represented by a
matrix $X$ defined by
$$
\sum_{k=1}^{p}\sum_{i=1}^{s_k}{e'}^{\zg_k}_i
(X_{ku})_{ij}:=\ell(e^{\zg_u}_j)\,,
$$
where $u\in\{1,\ldots,p\}$ and $j\in\{1,\ldots,r_u\}$.\medskip

Every homogeneous matrix can be written in the form:
\begin{equation}
\label{MatX}
X=\left(
\begin{array}{c|c|c}
X_{11} \Top &\; \ldots \; &X_{1p}\\[6pt]
\hline
\ldots \Top &\ldots&\ldots\\[3pt]
\hline
&&\\[-4pt]
X_{p1} &\ldots&X_{pp} \\[3pt]
\end{array}
\right)\,,
\end{equation}
where each $X_{ku}$ is a matrix of dimension
$s_k\times r_u$ with entries in $A^{-\zg_k+\zg_u+x}$.
We denote by $\sss{M}^{x}(\mathbf{s},\mathbf{r}; A)$
the set of \textit{homogeneous matrices} of degree $x\in\zG$
and
$$\sss{M}(\mathbf{s}, \mathbf{r}; A)=
\bigoplus_{x\in\G}\sss{M}^{x}(\mathbf{s}, \mathbf{r};A)\,.
$$

The set $\sss{M}^{x}(\mathbf{s},\mathbf{r}; A)$ is in
1-to-1 correspondence with the space
$\sss{Hom}_A^{x}(A^{\mathbf{r}},A^{\mathbf{s}})$ of all weight
$x$ $A$-linear maps between the free graded $A$-modules
$A^{\mathbf{r}}$ and $A^{\mathbf{s}}$ of rank $\mathbf{r}$ and
$\mathbf{s}$, respectively. This correspondence allows
transferring the vector space structure of the latter space to
weight~$x$ graded matrices. We thus obtain:
\begin{itemize}
\item
 the usual matrix sum of matrices;
 \item
  the usual multiplication of matrices.
  \end{itemize}
The \textit{multiplication of matrices by scalars} in $A$ is less obvious.
One has:
\begin{equation}
\label{AmoduleStruc}
a\,X=\left(
\begin{array}{c|c|c}
(-1)^{\la \tilde a,\zg_1\ra}a X_{11} \Top & \quad \ldots \;\;\;\; &(-1)^{\la \tilde a,\zg_1\ra}a X_{1p}\\[6pt]
\hline
\ldots \Top &\ldots&\ldots\\[3pt]
\hline
&&\\[-5pt]
(-1)^{\la \tilde a,\zg_p\ra}a X_{p1}&\ldots&(-1)^{\la \tilde a,\zg_p\ra}a X_{pp} \\[3pt]
\end{array}
\right)
\end{equation}
so that the sign depends on the row of a matrix.
Indeed, the graded $A$-module structure of $\sss{M}(\mathbf{s}, \mathbf{r}; A)$ is
induced by the $A$-module structure on $\sss{Hom}_A(A^{\mathbf{r}},A^{\mathbf{s}})$.
 \medskip

The space
$$
\sss{M}(\mathbf{r};A):=\sss{M}(\mathbf{r}, \mathbf{r}; A)\simeq
\sss{End}_A(A^{\mathbf{r}})
$$
is the most important example of the space of matrices.
This space is a graded $A$-module and a
graded associative unital algebra, hence a graded Lie algebra for
the graded commutator~(\ref{GCom}).
The even invertible matrices form a group that we denote by $\sss{GL}(\mathbf{r};A)$.\medskip

\subsection{$(\Z_2)^n$- and $(\Z_2)^{n+1}$-Gradings on Clifford Algebras}\label{CliS}
From now on, we will consider the following Abelian group:
$$\zG=(\Z_2)^n$$
of order $2^n$.
Elements of $(\Z_2)^n$ are identified with $n$-vectors with coordinates $0$ and $1$,
the element $0:=(0,\ldots,0)$ is the unit element of the group.
We will need the following two simple additional definitions related to $(\Z_2)^n$.

\begin{itemize}
\item
The group $(\Z_2)^n$ is equipped with the standard \textit{scalar product}
with values in $\Z_2 \,$:
\begin{equation}
\label{ScalPr}
\la\g,\g'\ra=\sum_{i=1}^n\g_i\g_i'\,.
\end{equation}
\item
An \textit{ordering} of the elements of $(\Z_2)^n$, such
that the first (resp., the last) $2^{n-1}$ elements are even
(resp., odd).
The order is termed \emph{standard} if in addition, the subsets of even and odd elements are ordered
lexicographically. For instance,
\beas
\Z_2 &=& \{ 0,1\}\,,\\
(\Z_2)^{2}&=& \{ (0,0),(1,1),(0,1),(1,0) \}\,,\\
(\Z_2)^{3}&=& \{ (0,0,0),(0,1,1),(1,0,1),(1,1,0)
,(0,0,1),(0,1,0),(1,0,0),(1,1,1)\}\,. \eeas

\end{itemize}
\medskip

The \textit{real Clifford algebra}
$\op{Cl}_{p,q}(\R)$ is the associative $\R$-algebra
generated by $e_i$, where $1\le i\le n$ and
$n=p+q$, of $\R^n,$ modulo the relations
$$
\begin{array}{rcl}
e_ie_j&=&-e_je_i\, ,
\quad i\neq j\;,\\ [4pt]
e_i^2&=&
\begin{cases}+1\,,\quad i\le p\\ -1\,,\quad
i>p\,.\end{cases}
\end{array}
$$
The pair of integers $(p,q)$ is called the \emph{signature}.
Note that, as a vector space, $\op{Cl}_{p,q}(\R)$ is isomorphic to the
Grassmann algebra $\bigwedge\la e_1,\ldots,e_n\ra$ on the chosen
generators. Furthermore,  $\op{Cl}_{p,q}(\R)$ is often understood as
quantization of the Grassmann algebra (in the same sense as the Weyl algebra is a quantization of the symmetric algebra).\medskip

Real Clifford algebras can be seen as graded commutative algebras essentially in two different ways.

A $(\Z_2)^n$-grading on $\op{Cl}_{p,q}(\R)$ was defined in
\cite{AM1} by setting for the generators
$$
\widehat{e_i}=\left(0,\ldots,0,1,0,\ldots,0\right)\,,
$$
where $1$ occupies the $i$-th position.
However, the graded commutativity condition (\ref{GCon}) is not
satisfied with respect to the standard scalar product
(\ref{ScalPr}), which has to be replaced by another binary
function on $(\Z_2)^n$, see \cite{AM1}.

A $(\Z_2)^{n+1}_{\;0}$-grading on $\op{Cl}_{p,q}(\R)$ has been
considered in \cite{OM}. This grading coincides with the preceding
Albuquerque-Majid degree, if one identifies $(\Z_2)^n$ with the
even subgroup $(\Z_2)^{n+1}_{\;0}$ of $(\Z_2)^{n+1}$. Indeed, the
new degree is defined by
\begin{equation}
\label{GoodGrad}
\widetilde{e_i}:=\left(0,\ldots,0,1,0,\ldots,0,1\right)\,.
\end{equation}
An advantage of this ``even'' grading is that the condition
(\ref{GCon}) is now satisfied with respect to the standard scalar
product. It was proven in \cite{OM} that the defined
$(\Z_2)^{n+1}_{\;0}$-grading on $\op{Cl}_{p,q}(\R)$ is universal
in the following sense: {\it every simple finite-dimensional
associative graded-commutative algebra is isomorphic to a Clifford
algebra equipped with the above $(\Z_2)^{n+1}_{\;0}$-grading}.

\begin{ex}\label{quatgrad}
The $(\Z_2)^3_{\;0}$-grading of the quaternions
$\mathbb{H}=1\R\oplus \qi\R\oplus \qj\R\oplus \qk\R$ is defined by:
\begin{equation}
\label{DegH}
\begin{array}{rcl}
\degr{1}&=&(0,0,0)\,,\\[4pt]
\degr{\qi}&=&(0,1,1)\,,\\[4pt]
\degr{\qj}&=&(1,0,1)\,,\\[4pt]
\degr{\qk}&=&(1,1,0)\,,
\end{array}
\end{equation}
see \cite{MO} for more details.
\end{ex}

\begin{rem}
It is natural to understand Clifford algebras as \textit{even algebras}.
Moreover, sometimes it is useful to consider larger graded algebras
that contain a given Clifford algebra as an even part, see \cite{OM11}.
It is therefore natural to use the even grading (\ref{GoodGrad}).
\end{rem}

\section{$(\Z_2)^n$-Graded Trace}\label{ProofS}

In this section we introduce
the notion of \textit{graded trace} of a matrix over a
$(\Z_2)^n$-graded commutative algebra $A$
that extends the notion of supertrace.
Although the proof of the main result is quite elementary,
this is the first important ingredient of our theory.
Let us also mention that the notion of trace is missing
in the literature on quaternionic matrices
(as well as on matrices with coefficients in Clifford algebras).

\subsection{Fundamental Theorem and Explicit Formula}\label{TrThm}

The first main result of this paper is as follows.

\begin{thm}
\label{TThmGen}
There exists a unique (up to multiplication by a scalar of weight $0$)
$A$-linear graded Lie algebra
homomorphism
$$
\op{\zG tr}:\sss{M}(\mathbf{r}; A)\to A\,,
$$
defined for a homogeneous matrix $X$ of degree $x$ by
\begin{equation}
\label{TheTrace}
\op{\zG tr}(X)=\sum_k(-1)^{\la\zg_k+x,\,\zg_k\ra}\op{tr}(X_{kk})\,,
\end{equation}
where $\op{tr}$ is the usual trace and where $X_{kk}$ is a graded block of $X$,
see formula (\ref{MatX}).
\end{thm}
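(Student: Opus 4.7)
The plan is to verify the three defining properties of formula (\ref{TheTrace})---weight zero, $A$-linearity, and the graded Lie morphism condition---and then prove uniqueness by restricting to an $A$-basis of elementary matrices. Well-definedness is immediate: the entries of $X_{kk}$ for $X$ of degree $x$ all lie in $A^{-\zg_k+\zg_k+x}=A^{x}$, so $\op{tr}(X_{kk})\in A^{x}$ and the formula produces a map of weight $0$. For $A$-linearity, formula (\ref{AmoduleStruc}) shows that the $(k,k)$-block of $aX$ is $(-1)^{\la\tilde a,\zg_k\ra}aX_{kk}$, so inserting this into (\ref{TheTrace}) produces two copies of the factor $(-1)^{\la\tilde a,\zg_k\ra}$ (one from the block action, one from expanding $\la\zg_k+\tilde a+x,\zg_k\ra$), which cancel via $2\la\tilde a,\zg_k\ra\equiv 0\pmod 2$, leaving $\op{\zG tr}(aX)=a\,\op{\zG tr}(X)$.

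Since $A$ is graded commutative its graded bracket is trivial, so the Lie morphism condition reduces to $\op{\zG tr}(XY)=(-1)^{\la x,y\ra}\op{\zG tr}(YX)$ for homogeneous $X,Y$ of degrees $x,y$. I would expand $(XY)_{kk}=\sum_u X_{ku}Y_{uk}$; since all entries of $X_{ku}$ share the common degree $\zg_k+\zg_u+x$ and those of $Y_{uk}$ share $\zg_k+\zg_u+y$, graded commutativity of $A$ gives $\op{tr}(X_{ku}Y_{uk})=(-1)^{\la\zg_k+\zg_u+x,\,\zg_k+\zg_u+y\ra}\op{tr}(Y_{uk}X_{ku})$. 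Substituting, swapping the summation labels $k\leftrightarrow u$, and simplifying the exponent modulo $2$ with $\la\zg_k+\zg_u,\zg_k+\zg_u\ra\equiv\la\zg_k,\zg_k\ra+\la\zg_u,\zg_u\ra$ and the symmetry of $\la\,,\,\ra$, the accumulated exponent collapses to $\la\zg_u+x+y,\zg_u\ra+\la x,y\ra$, which is precisely the required identity.

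For uniqueness, let $\phi\colon\sss{M}(\mathbf{r};A)\to A$ be any such homomorphism and denote by $e^{kl}_{ij}\in\sss{M}^{\zg_k+\zg_l}(\mathbf{r};A)$ the elementary matrix whose only nonzero entry is a $1$ at position $(i,j)$ of the block $(k,l)$; these form an $A$-basis. A direct block computation yields the key commutator identities $[e^{kk}_{ii},e^{kl}_{ij}]=e^{kl}_{ij}$ whenever $(k,i)\ne(l,j)$ (and the analogous version with $e^{ll}_{jj}$), and $[e^{kl}_{ij},e^{lk}_{ji}]=e^{kk}_{ii}-(-1)^{\la\zg_k,\zg_k\ra+\la\zg_l,\zg_l\ra}e^{ll}_{jj}$. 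Since the bracket of $A$ is trivial $\phi$ kills every commutator: the first identity forces $\phi(e^{kl}_{ij})=0$ whenever $(k,i)\ne(l,j)$, and the second shows that the scalar $c:=(-1)^{\la\zg_k,\zg_k\ra}\phi(e^{kk}_{ii})\in A^{0}$ is independent of both $k$ and $i$. Comparison with the formula gives $\phi(e^{kl}_{ij})=c\,\op{\zG tr}(e^{kl}_{ij})$ on the basis, and $A$-linearity extends the equality $\phi=c\,\op{\zG tr}$ to all of $\sss{M}(\mathbf{r};A)$.

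The only genuine obstacle is the combinatorial sign bookkeeping in the graded-Lie step, where one must simultaneously juggle the prefactor $(-1)^{\la\zg_k+x+y,\zg_k\ra}$ coming from $\op{\zG tr}$, the graded-commutativity signs arising when blocks are cycled inside the trace, and the parity identities $2\la\cdot,\cdot\ra\equiv 0$ on $(\Z_2)^n$; the computation is routine but requires care.
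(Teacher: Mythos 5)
Your proposal is correct and follows essentially the same route as the paper: verify weight zero, $A$-linearity and the vanishing of $\op{\zG tr}$ on graded commutators directly from formula (\ref{TheTrace}), then prove uniqueness by reducing, via $A$-linearity, to the elementary matrices and exploiting that any such homomorphism kills graded commutators (the target bracket on $A$ being trivial). The only cosmetic difference is in the uniqueness bookkeeping: you use the commutator identities $[e^{kk}_{ii},e^{kl}_{ij}]=e^{kl}_{ij}$ and $[e^{kl}_{ij},e^{lk}_{ji}]=e^{kk}_{ii}-(-1)^{\la\zg_k,\zg_k\ra+\la\zg_l,\zg_l\ra}e^{ll}_{jj}$, whereas the paper writes $E_{\za\zb}=E_{\za 1}E_{1\zb}$ and swaps the two factors under $\op{\zG tr}$ -- the same mechanism, with your existence checks merely spelling out what the paper declares straightforward.
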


Let us stress the fact that the term ``homomorphism'' must be understood in the categorical sense and means homomorphism of weight 0.
For any $X\in \sss{M}(\mathbf{r}; A)$, we refer to $\op{\zG tr}(X)\in A$
as the graded trace of $X$.
Of course, if $A$ is a usual supercommutative ($\Z_2$-graded) algebra,
we recover the classical supertrace $\op{str}$.

\begin{proof}
It is straightforward to check that formula (\ref{TheTrace}) is
$A$-linear and indeed defines a graded Lie algebra morphism. Let
us prove uniqueness. \medskip

Recall that a homogeneous matrix $X\in \sss{M}^x(\mathbf{r}; A)$
is a matrix that contains $p\times p$ blocks $X_{ku}$ of dimension
$r_k\times r_u$ with entries in $A^{\zg_k+\zg_u+x}$. We denoted
the entry $(i,j)$ of block $X_{ku}$, located on block row $k$ and
block column $u$, by $(X_{ku})_{ij}$. Let us emphasize that if we
view $X$ as an ordinary $\sum_k r_k\,\times \sum_k r_k$ matrix, we
denote its entries by $x_{\za\zb}$. \medskip

Let $E_{\za\zb}\in \sss{M}(\mathbf{r}; A)$ be the matrix containing $1_A$
in entry $(\za,\zb)$ and zero elsewhere. As any row index $\za$ determines
 a unique block row index $k$ and therefore a unique weight $w_{\za}:=\zg_k$,
 matrix $E_{\za\zb}$ is homogeneous of weight $w_{\za}+w_{\zb}$.
 It is easily seen that $E_{\za\zb}E_{\zh\ze}$ equals $E_{\za\ze}$, if $\zb=\zh$,
 and vanishes otherwise.\medskip

In view of Equation (\ref{AmoduleStruc}), any graded matrix
$X\in\sss{M}^x(\mathbf{r}; A)$ reads
$$X=\sum_{\za,\zb}(-1)^{\la
w_{\za}+w_{\zb}+x,w_{\za}\ra}x_{\za\zb}E_{\za\zb}\,.$$ It
follows from the graded $A$-module morphism property of the graded
trace that this functional is completely determined by its values
on the matrices $E_{\za\zb}$. Moreover, the graded Lie algebra
property entails
$$
\op{\zG tr}(E_{\za\zb})= \op{\zG
tr}(E_{\za 1}E_{1\zb})=(-1)^{\la
w_{\za}+w_1,w_1+w_{\zb}\ra}\op{\zG tr}(E_{1\zb}E_{\za
1})=(-1)^{\la w_{\za},w_{\zb}\ra}\zd_{\za\zb}\op{\zG
tr}(E_{11})\,,
$$
where $\zd_{\za\zb}$ is Kronecker's symbol. When
combining the two last results, we get
$$
\op{\zG tr}(X)=\op{\zG
tr}(E_{11})\sum_{\za}(-1)^{\la
w_{\za}+x,w_{\za}\ra}x_{\za\za}=
\sum_k(-1)^{\la
\zg_k+x,\zg_k\ra}\op{tr}(X_{kk})\,.
$$
Hence the uniqueness.
\end{proof}

\begin{rem}
Thanks to its linearity, the graded trace (\ref{TheTrace}) is well-defined for an \textit{arbitrary}
(not necessarily homogeneous or even) matrix $X$.
This will not be the case for the graded determinant or graded Berezinian.
In this sense, the notion of trace is more universal.
On the other hand, in algebra conditions of invariance can always be formulated infinitesimally
so that the trace often suffices.
\end{rem}

In Section \ref{ExSect}, we will give a number of examples of traces
of quaternionic matrices.

\subsection{Application: Lax Pairs}

Let us give here just one application of Theorem \ref{TThmGen}.

\begin{cor}
\label{Lax}
Given two families of even matrices $X(t),Y(t)\in \sss{M}(\mathbf{r}; A)$
(smooth or analytic, etc.) in one real or complex parameter $t$ satisfying the equation
$$
\frac{d}{dt} X=\left[X,Y\right]\,,
$$
the functions $\op{\zG tr}(X),\,\op{\zG tr}(X^2),\ldots$ are independent of $t$.
\end{cor}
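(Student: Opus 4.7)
The plan is to mimic the classical argument for ordinary Lax pairs, taking advantage of the fact that by Theorem~\ref{TThmGen} the graded trace $\op{\zG tr}$ is a homomorphism of graded Lie algebras into $A$, which, as a graded commutative algebra, carries the \emph{trivial} graded Lie bracket. Thus any graded commutator lands in the kernel of $\op{\zG tr}$, and the whole problem reduces to showing that $\frac{d}{dt} X^m$ is itself a graded commutator.

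First I would establish the key algebraic identity
\[
\frac{d}{dt} X^m = [X^m, Y] \qquad (m \ge 1).
\]
Since $X$ and $Y$ are even (degree $0$), the graded commutator in $\sss{M}(\mathbf{r};A)$ reduces to the ordinary commutator, so all signs in (\ref{GCom}) are trivial. I would apply the Leibniz rule to expand $\frac{d}{dt}X^m = \sum_{j=0}^{m-1} X^{j}\bigl(\frac{d}{dt}X\bigr)X^{m-1-j}$, substitute $\frac{d}{dt}X = XY - YX$, and observe that the sum telescopes: setting $T_j := X^{j} Y X^{m-j}$, each summand equals $T_{j+1}-T_j$, so the total collapses to $T_m - T_0 = X^m Y - Y X^m = [X^m, Y]$. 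This is the only calculation of any substance in the proof.

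Next I would conclude. The graded trace is $A$-linear, hence in particular $\K$-linear, so it commutes with $\frac{d}{dt}$:
\[
\frac{d}{dt}\op{\zG tr}(X^m) \;=\; \op{\zG tr}\!\left(\tfrac{d}{dt} X^m\right) \;=\; \op{\zG tr}\bigl([X^m,Y]\bigr).
\]
By Theorem~\ref{TThmGen}, $\op{\zG tr}$ is a morphism of graded Lie algebras into $A$, so the right-hand side equals $[\op{\zG tr}(X^m), \op{\zG tr}(Y)]_A$, which vanishes because the graded commutator on a graded commutative algebra is identically zero (Equation~(\ref{GCon})). Hence $\op{\zG tr}(X^m)$ is constant in $t$ for every $m \ge 1$.

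The proof is essentially immediate once Theorem~\ref{TThmGen} is in hand; the only mild obstacle is the bookkeeping in the telescoping step, and even that is trivial here because the hypothesis that $X$ and $Y$ are even kills every sign that could appear. If one wished to relax the evenness hypothesis (say, allow $X$ homogeneous of arbitrary degree while keeping $Y$ of degree $0$), the same telescoping still works verbatim, since $\la \widetilde{X^m}, 0\ra = 0$; the statement as given is the cleanest version.
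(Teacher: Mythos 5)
Your proof has a genuine gap, and it sits in the very first reduction: you read ``even'' as ``degree $0$'' and conclude that ``all signs in (\ref{GCom}) are trivial''. In this paper \emph{even} means homogeneous of degree $\g$ with $\la\g,\g\ra=0$, not $\g=0$, and for $n\ge 2$ two even degrees can pair nontrivially: e.g.\ in $(\Z_2)^3$ the degrees $(0,1,1)$ and $(1,0,1)$ are both even, yet $\la(0,1,1),(1,0,1)\ra=1$. So for homogeneous even $X,Y$ the hypothesis is $\frac{d}{dt}X=[X,Y]=XY-(-1)^{\la \tilde x,\tilde y\ra}YX$, which may well be $XY+YX$. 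In that case your telescoping collapses no longer: with $\frac{d}{dt}X=XY+YX$ one gets $\frac{d}{dt}X^2=X^2Y+YX^2+2XYX$, whereas $[X^2,Y]=X^2Y-YX^2$ (since $\la 2\tilde x,\tilde y\ra=0$), so the key identity $\frac{d}{dt}X^m=[X^m,Y]$ is simply false in the generality of the statement. Your closing remark (keeping $Y$ of degree $0$) and the case $\la\tilde x,\tilde y\ra=0$ are fine, but they do not cover the corollary as stated; the final step (graded trace kills graded commutators because the bracket on the graded commutative algebra $A$ vanishes) is correct and is also what the paper uses.

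The repair is essentially the paper's argument: use the Leibniz rule to write $\frac{d}{dt}X^k=\sum_{i=1}^k X^{i-1}[X,Y]X^{k-i}$ and then recognize \emph{each summand} as a graded commutator, $X^{i-1}[X,Y]X^{k-i}=\bigl[X,\,X^{i-1}YX^{k-i}\bigr]$, which only requires $\la\tilde x,\tilde x\ra=0$ (evenness of $X$), while the sign $(-1)^{\la\tilde x,\tilde y\ra}$ is carried along untouched. Applying $\op{\zG tr}$ and the graded Lie algebra morphism property of Theorem \ref{TThmGen} to each bracket then gives $\frac{d}{dt}\op{\zG tr}(X^k)=0$. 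So the difference is not cosmetic: the single-commutator identity $\frac{d}{dt}X^m=[X^m,Y]$ must be replaced by a sum of $k$ graded commutators, and the evenness hypothesis enters exactly there, not in trivializing the sign between $X$ and $Y$.
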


\begin{proof}
The function $\op{\zG tr}$ obviously commutes with $\frac{d}{dt}$,
therefore
$$
\frac{d}{dt}\op{\zG tr}(X)=\op{\zG tr}\left(\left[X,Y\right]\right)=0\,.
$$
Furthermore, let us show that
$$
\frac{d}{dt}\op{\zG tr}(X^2)=\op{\zG tr}\left(\left[X,Y\right]X+X\left[X,Y\right]\right)=0\,.
$$
Indeed, by definition of the commutator (\ref{GCom}) one has:
$$
\left[X,Y\right]X=XYX-(-1)^{\la{}y,x\ra}YXX=\left[X,\,YX\right]
$$
and
$$ X \left [X,Y\right]=XXY-(-1)^{\la y,x\ra}XYX=\left[X,\,XY \right] $$
due to the assumption that $X$ is even, i.e., $\la x,x\ra=0$.

This argument can be generalized to prove that $\frac{d}{dt}
\op{\zG tr}(X^k)=0$ for higher $k$, since we find by induction
that
 $$ \frac{d}{dt} X^k = \sum_{i=1}^k \lpq X, X^{i-1}YX^{k-i} \rpq \,.$$

\end{proof}

The above statement is an analog of the Lax representation
that plays a crucial role in the theory of integrable systems.
The functions $\op{\zG tr}(X),\,\op{\zG tr}(X^2),\ldots$
are first integrals of the dynamical system $\frac{d}{dt}X=[X,Y]$ that often suffice to prove its integrability.
Note that integrability in the quaternionic and more generally
Clifford case is not yet understood completely.
We hope that interesting examples of integrable systems
can be found within the framework of graded algebra.

\section{$(\Z _2)^n$-Graded Determinant of Purely Even Matrices of Degree 0}\label{BerSect}

Let $A$ be a \textit{purely even} $(\Z _2)^n$-graded commutative
i.e., a $(\Z_2)^{n}_{\,0}$-graded commutative algebra. We also
refer to matrices over $A$ as \emph{purely even}
$(\Z_2)^{n}$-graded or $(\Z_2)^{n}_{\;0}$-graded matrices. Their
space will be denoted by $\textsf{M}(\mathbf{r}_0;A)$, where
$\mathbf{r}_0 \in \N^q$, $q=2^{n-1}$.

\subsection{Statement of the Fundamental Theorem}

As in usual Superalgebra, the case of purely even matrices is
special, in the sense that we obtain a concept of determinant
which is polynomial (unlike the general Berezinian).

\begin{thm}\label{thmgdet}
(i)
There exists a unique map $\gdet:\sss{M}^0(\mathbf{r}_0;A)\to
A^{0}$ that satisfies:
\begin{enumerate}
    \item For all $X,Y\in \sss{M}^0(\mathbf{r}_0;A)$, we have $$ \gdet(XY)=\gdet(X)\cdot\gdet(Y)\,. $$
    \item If $X$ is block-diagonal, then
    $$ \gdet(X)=\prod_{k=1}^{q} \det(X_{kk})\,.$$
    \item If $X$ is block-unitriangular, then $\gdet(X)=1\,.$
\end{enumerate}
(ii)
For any matrix $X\in\sss{M}^0(\mathbf{r}_0;A)$, the value $\gdet(X)$ is linear in the rows and columns of $X$ and therefore it is polynomial.
\end{thm}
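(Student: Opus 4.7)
The plan is to use block Gauss elimination together with the quasideterminant calculus of \cite{GGRW05} to pin down $\gdet$ uniquely, exhibit it as an explicit product of quasiminors, and then argue that this a priori rational expression is polynomial.

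\textbf{Uniqueness.} Each diagonal block $X_{kk}$ of $X\in\sss{M}^0(\mathbf{r}_0;A)$ has entries in the commutative subalgebra $A^0$, so its classical determinant is well defined. Assume for the moment that every leading principal block of $X$ is invertible over $A^0$. Isolating the first block row and column and performing block Gauss elimination yields
$$X=\begin{pmatrix} I & 0 \\ X_{21}X_{11}^{-1} & I \end{pmatrix}\begin{pmatrix} X_{11} & 0 \\ 0 & \widetilde X \end{pmatrix}\begin{pmatrix} I & X_{11}^{-1}X_{12} \\ 0 & I \end{pmatrix},$$
where the Schur complement $\widetilde X$ is again purely even of degree $0$ and of strictly smaller block size, and all three factors are of degree $0$. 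Axioms (1)--(3) then force
$$\gdet(X)=\det(X_{11})\cdot\gdet(\widetilde X),$$
and induction on the number of diagonal blocks determines $\gdet(X)$ uniquely on the Zariski-dense open set of matrices with invertible leading principal blocks.

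\textbf{Existence.} Take the recursion above as the definition of $\gdet$ on the generic locus; unwinding it, $\gdet(X)$ becomes a product of principal quasiminors in the sense of \cite{GGRW05}. Properties (2) and (3) on block-diagonal and block-unitriangular matrices fall out immediately from the Schur complement formula. Multiplicativity (1) is the substantive check: given $X=L_XD_XU_X$ and $Y=L_YD_YU_Y$ and computing the LDU form of $XY$, one verifies using the heredity identities of quasideterminants under matrix multiplication \cite{GGRW05} and the commutativity of $A^0$ that $\gdet(XY)=\gdet(X)\cdot\gdet(Y)$.

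\textbf{Polynomiality, statement (ii), and main obstacle.} The hard part is (ii): the recursive formula is a priori only rational since $X_{11}^{-1}$ enters through $\widetilde X$. The strategy is to clear denominators step by step using $\det(X_{11})\cdot X_{11}^{-1}=\mathrm{adj}(X_{11})$; tracking the cancellations through the recursion, each monomial in the final expansion turns out to use each row and each column of $X$ exactly once, giving a graded Leibniz-type formula with signs dictated by $\langle\,,\,\rangle$ restricted to $(\Z_2)^n_{\,0}$. Multilinearity in each row and column then becomes manifest, and uniqueness on the full space $\sss{M}^0(\mathbf{r}_0;A)$ extends from the generic locus by polynomiality. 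The principal obstacle is precisely this polynomiality and the attendant sign bookkeeping: the quasideterminant cancellations must be carried out in a $(\Z_2)^n$-graded setting rather than the classical supercase, and the commutativity of $A^0$ together with the axioms (2) and (3) must conspire at each inductive step to eliminate every denominator.
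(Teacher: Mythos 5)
Your skeleton (Schur--complement/UDL recursion forcing $\gdet$ to be the product of principal quasiminors, then taking that product as the definition) is the same as the paper's, but the two load-bearing steps of the theorem are not actually proved. The substantive content of part (i) is multiplicativity, and your justification --- ``heredity identities of quasideterminants under matrix multiplication'' together with ``commutativity of $A^0$'' --- does not exist as stated: heredity compares quasiminors of a \emph{single} matrix under refinement of its block decomposition and says nothing about products $XY$, and quasideterminants are not multiplicative. Moreover, commutativity of $A^0$ is not the relevant hypothesis: the off-diagonal blocks of a degree-$0$ purely even matrix have entries of \emph{nonzero} even degrees, and two such elements commute only up to the sign $(-1)^{\langle\gamma,\gamma'\rangle}$, which can be $-1$ (e.g.\ $\qi,\qj\in\qH$ of degrees $(0,1,1)$ and $(1,0,1)$ anticommute). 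This is exactly the difficulty the paper has to confront: it proves multiplicativity by induction on $n$ via the coarse $2\times 2$ redecomposition $\frak X$, reduces the second factor to lower-unitriangular, diagonal, and elementary upper pieces, and the crucial ingredient is Lemma \ref{AB BA}, $\gdet(\I+\frak X_{12}\frak Y_{21})=\gdet(\I+\frak Y_{21}\frak X_{12})$ for an elementary factor, where the two surviving entries commute precisely because they carry the \emph{same} degree. Nothing in your proposal replaces this argument.

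Part (ii) is likewise only a plan: you describe clearing denominators with adjugates and predict a Leibniz-type expansion, but you yourself identify the cancellation bookkeeping as ``the principal obstacle'' --- and that bookkeeping \emph{is} the proof. The paper obtains polynomiality from the homological relations for quasideterminants (linearity of $D(X)=|X|_{11}|X^{1,1}|_{22}\cdots x_{r_0 r_0}$ when all block sizes are $\le 1$, Propositions \ref{Arbitrary} and \ref{PolyDTheo}) followed by a double induction on the total dimension and on the shape (\ref{kform}), using the row operations $G_{\alpha\beta}(\lambda)$ and, decisively, the comparison of two different reductions (one a priori involving $x_{11}^{-1}$, the other $x_{21}^{-1}$) to conclude that no inverse can survive in the final expression. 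A further caution: ``Zariski-dense open set'' has no literal meaning over a general $(\Z_2)^n$-graded commutative algebra $A$ (which may contain many nilpotents and non-invertible elements); like the paper, you must work formally, take the resulting polynomial as the definition of $\gdet$, and then argue that the identities (1)--(3), being polynomial identities, persist in full generality. As written, your text correctly guesses the shape of the answer but leaves both of its essential steps unestablished.
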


We refer to $\gdet(X)\in A^{0}$, where $X\in\sss{M}^0(\mathbf{r}_0;A)$, as the {\it graded determinant} of $X$.
\medskip

To prove the theorem, we first work formally (see \cite{Olv06}),
i.e., we assume existence of inverse matrices of all square
matrices. Hence, in one sense, we begin by working on an open
dense subset of $\sss{M}^0(\mathbf{r}_0;A)$. We show that the
graded determinant $\gdet$ formally exists and is unique, then we
use this result to give evidence of the fact that $\gdet$ is
polynomial. The latter polynomial will be our final definition of
$\gdet$ and Theorem \ref{thmgdet} will hold true not only formally
but in whole generality.

\subsection{Preliminaries}
To find a (formal) explicit expression of the graded determinant, we use an UDL decomposition (i.e., a factorization into an upper unitriangular matrix $U$, a diagonal matrix $D$, and a lower unitriangular matrix $L$). For matrices over a not necessarily commutative ring the entries of the UDL factors are tightly related to quasideteminants.
To ensure independent readability of the present text, we recall the concept of quasideterminants (see \cite{GR91} and \cite{GGRW05} for a more detailed and extensive survey on the subject).

\subsubsection{Quasideterminants}
Quasideterminants are an important tool in Noncommutative Algebra;
known determinants with noncommutative entries are products of
quasiminors.
For matrices over a non-commutative ring the entries of
the UDL factors are tightly related to quasideterminants.
 In general, a quasideterminant is a rational function
in its entries; in the commutative situation, a quasideterminant
is not a determinant but a quotient of two determinants.\medskip

Let $R$ be a unital (not necessarily commutative) ring and let
$X\in\op{gl}(r;R),$ $r\in\mathbb{N}\setminus\{0\}$. Denote by
$X^{i,j}$, $1 \leq i,j \leq r$, the matrix obtained from $X$ by
deletion of row $i$ and column $j$. Moreover, let $r_{i}^{j}$
(resp., $c_{j}^{i}$) be the row $i$ (resp., the column $j$)
without entry $x_{ij}$.
\begin{defi}\label{defQuasidet}
If $X\in\op{gl}(r; R)$ and if the submatrix $X^{i,j}$ is
invertible over $R$, the {\it quasideterminant} $(i,j)$ of $X$ is
the element $\left| X \right|_{ij}$ of $R$ defined by \be \left| X
\right|_{ij}:= x_{ij} -  r_{i}^{j} (X^{i,j})^{-1} c_{j}^{i}\,.
\ee
\end{defi}

Any partition $r=r_{1}+r_{1}+ \cdots
+r_{p}$ determines a $p\times p$ block decomposition
$X=(X_{ku})_{1\leq k,u\leq p}$ with square diagonal blocks.
According to common practice in the literature on
quasideterminants, the entries $(X_{ku})_{ij}$ of the block
matrices $X_{ku}$ are in the following numbered consecutively,
i.e., for any fixed $k,u$,
$$
1+\sum_{l < k} r_{l} \leq i \leq \sum_{l \leq k}
r_{l}\quad\text{and}\quad 1+\sum_{l < u} r_{l} \leq j \leq \sum_{l
\leq u} r_{l}\,.
$$

The most striking property of quasideterminants is the
\emph{heredity principle}, i.e., ``a quasideterminant of a quasideterminant is a
quasideterminant''.
The following statement is proved in \cite{GGRW05}.\medskip

\noindent
{\bf Heredity principle.}
{\it
Consider a decomposition of $X\in\op{gl}(r; R)$ with square
diagonal blocks, a fixed block index $k$, and two fixed indices $i,j$ such that
$$
1+\sum_{l < k} r_{l} \leq i\,,\,
j \leq \sum_{l \leq k} r_{l}\,.
$$
 If the quasideterminant $\left| X \right|_{kk}$ is defined, then the
quasideterminant $||X | _{kk}|_{ij}$ exists if and only if the
quasideterminant $\left| X \right|_{ij}$ does. Moreover, in this
case}
\be |\left| X \right|_{kk}|_{ij} = \left| X \right|_{ij}\,.
\label{HP1}\ee
\medskip

Observe that $|X|_{kk}$ is a quasideterminant, not over a unital
ring, but over blocks of varying dimensions. Such blocks can only
be multiplied, if they have the appropriate dimensions, the
inverse can exist only for square blocks. However, under the usual
invertibility condition, all operations involved in this
quasideterminant make actually sense and
$|X|_{kk}\in\op{gl}(r_k;R)$.\medskip

In the following, we will need the next corollary of the
heredity principle.

\begin{cor}
\label{HPAugmented}
Consider a decomposition of $X\in\op{gl}(r; R)$
with square diagonal blocks, a block index $k$, and two indices $i,a$ (resp., $j,b$)
in the range of block $X_{kk}$, such that $i\neq a$ (resp., $j
\neq b$). If $|X|_{kk}$ exists, the {\small LHS} of Equation
(\ref{HP+}) exists if and only if the {\small RHS} does, and in
this case we have \be \label{HP+} | (| X |_{kk})^{i,j} |_{ab} =
\left| X^{i,j} \right|_{ab}\,. \ee
\end{cor}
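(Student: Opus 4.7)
The plan is to reduce the statement to a direct application of the heredity principle, Equation~\ref{HP1}, already established above. The key observation is that the matrix $X^{i,j}$ inherits a natural block decomposition from $X$: since the deleted row $i$ and column $j$ both lie in the range of block $k$, only block $k$ is affected, shrinking from size $r_k$ to $r_k-1$ with $(k,k)$-block equal to $(X_{kk})^{i,j}$, while the remaining blocks $X_{lu}$ either lose a single row (when $l=k$, $u\neq k$), a single column (when $l\neq k$, $u=k$), or nothing at all (when $l,u\neq k$).

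The first step is to check the matrix identity
$$|X^{i,j}|_{kk}=(|X|_{kk})^{i,j}.$$
This is a direct computation from Definition~\ref{defQuasidet}. On one hand, $(X^{i,j})^{k,k}=X^{k,k}$ because the deleted row and column live inside the already-removed block row and column. On the other hand, for any matrix $M$ and compatible row $R$, block $S$, and column $C$, one has the elementary identity $(M-RSC)^{i,j}=M^{i,j}-R^i\,S\,C^j$, where $R^i$ and $C^j$ denote $R$ without its entry~$i$ and $C$ without its entry~$j$, respectively. Applying this to the defining expression of $|X|_{kk}$ produces exactly the formula one gets when expanding $|X^{i,j}|_{kk}$ via Definition~\ref{defQuasidet} on the inherited block decomposition.

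The second step is to apply the heredity principle~\ref{HP1} to $X^{i,j}$ equipped with this block decomposition. For the block index $k$ and entry indices $(a,b)$, which still lie in the range of block $k$ of $X^{i,j}$ by virtue of $a\neq i$ and $b\neq j$, the principle yields
$$\bigl||X^{i,j}|_{kk}\bigr|_{ab}=|X^{i,j}|_{ab},$$
and, more importantly, asserts the equivalence of existence of the two quasideterminants $\bigl||X^{i,j}|_{kk}\bigr|_{ab}$ and $|X^{i,j}|_{ab}$, provided $|X^{i,j}|_{kk}$ exists. By the first step $|X^{i,j}|_{kk}=(|X|_{kk})^{i,j}$, which is well-defined because $|X|_{kk}$ is, by hypothesis. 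Substituting this identity into the left-hand side of the displayed equation gives Equation~\ref{HP+}, together with the desired existence equivalence.

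The only genuine obstacle is bookkeeping: keeping the consecutive (global) indices of $X$ straight when passing to the smaller matrix $X^{i,j}$ and matching them with the block-local positions used implicitly in the statement of~\ref{HP1}. Once the block structure of $X^{i,j}$ is identified correctly, the corollary is little more than a restatement of the heredity principle applied to a suitable submatrix.
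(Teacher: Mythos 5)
Your proof is correct and follows essentially the same route as the paper: you establish that deleting row $i$ and column $j$ commutes with taking the block quasideterminant, i.e. $(|X|_{kk})^{i,j}=|X^{i,j}|_{kk}$ (the paper does this by the explicit computation $(|X|_{11})^{i,j}=(X_{11})^{i,j}-B^{i,0}D^{-1}C^{0,j}$ after reducing to $k=1$), and then you conclude by applying the heredity principle to $X^{i,j}$ with its inherited block decomposition. The only differences are cosmetic: you keep a general block index $k$ and phrase the computation through an elementary identity for $(M-RSC)^{i,j}$, whereas the paper normalizes to $k=1$.
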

    \begin{proof}
    Assume for simplicity that $k=1$ and set $$X=\left(%
\begin{array}{cc}
  X_{11} & B \\[4pt]
  C & D \\
\end{array}%
\right)\,.$$ By definition,
    $$ \left| X \right|_{11}= X_{11}-BD^{-1}C\,.$$
    Clearly, $$( \left| X \right|_{11})^{i,j}= (X_{11})^{i,j}-B^{i,0}D^{-1}C^{0
    ,j}\,,$$ where $0$ means that no column
    or row has been deleted, coincides with $\left| X^{i,j} \right|_{11}$.
    The claim now follows from the heredity principle.
    \end{proof}

Heredity shows that quasideterminants handle matrices
over blocks (square diagonal blocks assumed) just the same as
matrices over a ring. Quasideterminants over blocks (square
diagonal blocks) that have themselves entries in a field were
studied in \cite{Olv06}. In view of the preceding remark, it is
not surprising that the latter theory coincides with that of
\cite{GGRW05}. Especially the heredity principle holds true for
decompositions of block matrices. Moreover, the nature of the
block entries is irrelevant, so that the results are valid for
block entries in a ring as well.\medskip

\begin{ex} Let
$$X=\lp \begin{array}{ccc}
x & a & b \\
c & y & d \\
e & f & z \\
 \end{array}\rp\in\op{gl(3,R)}\,,
 $$
 where $R$ is as above a unital ring. In this example, we work formally, i.e., without addressing the question of existence
of inverses. A short computation allows to see that the formal
inverse of a $2\times 2$ matrix over $R$ is given by
\be
\label{InvForm}
\lp\begin{array}{cc} y & d \\[6pt]
f & z
\end{array}\rp^{-1}=\lp
\begin{array}{rl} (y-dz^{-1}f)^{-1} &
-(y-dz^{-1}f)^{-1}dz^{-1} \\[8pt]
 -z^{-1}f(y-dz^{-1}f)^{-1} & z^{-1}+
z^{-1}f(y-dz^{-1}f)^{-1}dz^{-1}
\end{array}\rp\,.
\ee It then
follows from the definition of quasideterminants that
\be \left|X
\right|_{11} = x- b z^{-1}e
-(a-bz^{-1}f)(y-dz^{-1}f)^{-1}(c-dz^{-1}e) \label{herex}\,.\ee
Note that, when viewing matrix $X$ as block matrix $$X=\lp
\begin{array}{cc|c}
x & a & b \\
c & y & d \\
\hline
e & f & z \\
 \end{array}\rp\,,$$ with square diagonal blocks, the quasideterminant
$$
\left| \left| X \right|_{11} \right|_{11}=
\left|\begin{array}{cc} x-bz^{-1}e & a-bz^{-1}f \\
c-dz^{-1}e & y-dz^{-1}f \end{array} \right|_{11}\,,
$$
where the interior subscripts refer to the $2\times 2$ block decomposition
can be obtained without Inversion Formula (\ref{InvForm})
and coincides with the quasideterminant (\ref{herex}),
as claimed by the heredity principle.\end{ex}

\begin{rem} This example corroborates the already mentioned fact
that a quasideterminant with respect to the ordinary row-column
decomposition (resp., to a block decomposition) is a
rational expression (resp., a block of rational expressions) in
the matrix entries. \end{rem}

\subsubsection{UDL Decomposition of Block Matrices with Noncommutative Entries}\label{UDLsec}
 An {\it UDL decomposition} of a square matrix is a factorization into an upper unitriangular
 (i.e., triangular and all the entries of the diagonal are equal to $1$)
 matrix $U$, a diagonal matrix $D$, and a lower unitriangular matrix $L$.
 In this section we study existence and uniqueness of a block UDL decomposition
 for invertible block matrices with square diagonal blocks that have entries
in a not necessarily commutative ring $R.$

\begin{defi} An invertible block matrix $X=(X_{ku})_{k,u}$
with square diagonal blocks $X_{kk}$ and entries $x_{\za\zb}$ in $R$ is called {\it regular}
if and only if it admits a block UDL decomposition.\end{defi}

\begin{lem} If $X$ is regular, its UDL decomposition is unique.\end{lem}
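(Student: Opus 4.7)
The plan is the standard uniqueness argument, adapted to the noncommutative ring setting. Suppose $X = U_1 D_1 L_1 = U_2 D_2 L_2$ are two block UDL decompositions. Rearranging gives
$$A\, D_1 \;=\; D_2\, B, \qquad A := U_2^{-1} U_1, \quad B := L_2\, L_1^{-1},$$
and the goal reduces to showing $A = B = I$ and $D_1 = D_2$.

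A preliminary fact I first need is that block upper unitriangular matrices over $R$ form a group under multiplication, and similarly for block lower unitriangular ones. Closure is immediate, and inverses can be produced by block back-substitution: the equation $UV = I$ is solved column block by column block, and since each pivot is the identity block no element of $R$ is ever inverted in the process; the resulting $V$ is again block upper unitriangular. Consequently $A$ is block upper unitriangular and $B$ is block lower unitriangular.

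I would then compare block entries of $A D_1 = D_2 B$ position by position. On the diagonal, $(AD_1)_{kk} = A_{kk}(D_1)_{kk} = (D_1)_{kk}$ and $(D_2 B)_{kk} = (D_2)_{kk}$, so $D_1 = D_2 =: D$. Since $X$ is invertible and the unitriangular factors are invertible by the preliminary fact, $D$ is invertible; being block diagonal, each diagonal block $D_{uu}$ is then invertible over $R$. For $u > k$ the $(k,u)$ block of $AD = DB$ reads
$$A_{ku}\, D_{uu} \;=\; D_{kk}\, B_{ku} \;=\; 0,$$
the right-hand side vanishing because $B$ is block lower unitriangular. Right-multiplying by $D_{uu}^{-1}$ gives $A_{ku} = 0$; together with the block upper unitriangularity of $A$ this forces $A = I$ and hence $U_1 = U_2$. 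The symmetric argument, applied to blocks strictly below the diagonal, gives $B = I$ and hence $L_1 = L_2$.

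The only subtlety worth flagging is the noncommutativity of $R$: one must cancel $D_{uu}$ on the correct side, and the construction of inverses of unitriangular matrices must not rely on inverting arbitrary elements of $R$. Both points are handled by the preliminary fact above, so I do not expect any genuine obstacle.
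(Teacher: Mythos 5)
Your proof is correct and follows essentially the same route as the paper: rearrange the two factorizations into an identity of the form (upper unitriangular)$\cdot D_1 = D_2\cdot$(lower unitriangular), read off $D_1=D_2$ from the diagonal blocks, and use invertibility of $X$ (hence of $D$ and of its diagonal blocks) to force both unitriangular factors to be the identity. The paper phrases the last step via the conjugation $\mathcal{U}=D\mathcal{L}D^{-1}$ rather than your block-by-block cancellation, but this is only a presentational difference; your explicit remark that block unitriangular matrices are invertible over a noncommutative ring by back-substitution is a welcome (and correct) justification of a step the paper leaves implicit.
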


\begin{proof}
If $UDL=X=U'D'L'$ are two such decompositions, then
$U'^{-1}UD=D'L'L^{-1}$ and $\mathcal{U}=U'^{-1}U$ (resp., $\mathcal{L}=L'L^{-1}$)
is an upper (resp., lower) unitriangular matrix.
Since $\mathcal{U}D$ (resp., $D'\mathcal{L}$) is an upper (resp., lower) triangular matrix
with diagonal $D$ (resp., $D'$), we have $D=D'$.
The invertibility of $X$ entails that $D$ is invertible,
so $\mathcal{U}=D\mathcal{L}D^{-1}$,
where the {\small LHS} (resp., {\small RHS}) is upper (resp., lower) unitriangular.
It follows that $U=U'$ and $L=L'$.\end{proof}

\begin{prop}\label{UDL}
An invertible $p\times p$ block matrix $X=(X_{ku})_{k,u}$ with square diagonal blocks $X_{kk}$ and entries $x_{\za\zb}$ in $R$ is {\it regular} if and only if its principal block submatrices $$X^{1,1}, X^{12,12}, \ldots, X^{12\ldots (p-1),12\ldots (p-1)}=X_{pp}$$ are all invertible over $R$. In this case, $X$ factors as $X={\frak U D^{-1} \frak L}$, where
$$
\frak U=\left(\begin{array}{ccccc}|X|_{11}& |X^{2,1}|_{12}&
|X^{23,12}|_{13}&\ldots & X_{1p}\\[4pt] &
|X^{1,1}|_{22}&|X^{13,12}|_{23}&\ldots & X_{2p}\\[4pt]
&& |X^{12,12}|_{33}&\ldots &X_{3p}\\&&&\ddots
&\vdots\\&&&&X_{pp}
\end{array} \right)\,,
$$
$$
D=\left(\begin{array}{ccccc}|X|_{11}&&&&\\ &
|X^{1,1}|_{22}&&&\\
&& |X^{12,12}|_{33}& &\\&&&\ddots &\\&&&&X_{pp}\end{array}
\right)\,,
$$
and
$$
\frak
L=\left(\begin{array}{lllll}|X|_{11}&&&&\\ [4pt]
|X^{1,2}|_{21}&
|X^{1,1}|_{22}&&&\\[4pt]
|X^{12,23}|_{31} &|X^{12,13}|_{32}&
|X^{12,12}|_{33}&&\\
\vdots&\vdots&\vdots&\ddots
&\\X_{p1}&X_{p2}&X_{p3}&\ldots &X_{pp}\end{array} \right)\,.
$$
Observe that $\frak U_{ku}=|X^{1\ldots\hat k\ldots
u,1\ldots (u-1)}|_{ku}$ and $ \frak L_{uk}=|X^{1\ldots(u-1),1\ldots \hat k\ldots u}|_{uk} $ (for $k\leq u$). Matrix $X$ factors also as $$X=UDL\,,$$
where $$U=\frak U D^{-1}\quad (\text{resp.,}\;\; L=D^{-1}\frak L)$$ is an upper (resp., lower) unitriangular matrix.
\end{prop}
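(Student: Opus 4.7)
My plan is to prove existence by block Schur reduction applied recursively, and to identify the resulting entries with the claimed quasideterminants via iterated applications of the heredity principle~(\ref{HP1}) and Corollary~\ref{HPAugmented}. Uniqueness is supplied by the preceding lemma.

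The \emph{only if} direction is short. If $X=UDL$, then for every $s<p$, the lower-right principal block submatrix $X^{1\ldots s,\,1\ldots s}$ equals $U^{1\ldots s,\,1\ldots s}\,D^{1\ldots s,\,1\ldots s}\,L^{1\ldots s,\,1\ldots s}$, because the block triangularity of $U$ and $L$ forces every non-vanishing summation index in the product to exceed $s$. The two outer factors are unitriangular hence invertible; invertibility of $X$ forces $D$, and so each of its diagonal blocks, to be invertible. Therefore every listed $X^{1\ldots s,\,1\ldots s}$ is invertible.

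For the \emph{if} direction I proceed by induction on $p$. The case $p=1$ is trivial. For the inductive step, view $X$ as a $2\times 2$ block matrix with top-left entry $X_{11}$ and bottom-right entry the $(p-1)\times(p-1)$ submatrix $X^{1,1}$, which is invertible by hypothesis. The Schur identity
\[
X=\begin{pmatrix} I & r_1^1 (X^{1,1})^{-1} \\ 0 & I \end{pmatrix}\begin{pmatrix} |X|_{11} & 0 \\ 0 & X^{1,1} \end{pmatrix}\begin{pmatrix} I & 0 \\ (X^{1,1})^{-1} c_1^1 & I \end{pmatrix}
\]
peels off the first block, with the new top-left diagonal block being exactly $|X|_{11}$. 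The invertibility hypothesis transfers verbatim to $X^{1,1}$, whose lower-right principal block submatrices are precisely $X^{12,12},\ldots,X_{pp}$; by induction, $X^{1,1}=U'D'L'$ in block UDL form. Substituting this factorization into the middle Schur factor and absorbing $U',L'$ into the outer unitriangular factors produces $X=UDL$, whence $\mathfrak U=UD$ and $\mathfrak L=DL$ yield $X=\mathfrak U D^{-1}\mathfrak L$.

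It remains to identify the entries with the stated quasideterminants. The diagonal blocks satisfy $D_{11}=|X|_{11}$ by the Schur step, and inductively $D_{kk}$ ($k\ge 2$) is the analogous Schur-complement quasideterminant computed inside $X^{1,1}$; the heredity principle then collapses the nested quasideterminant-of-a-quasideterminant into $|X^{1\ldots(k-1),1\ldots(k-1)}|_{kk}$, as claimed. The off-diagonal entries of $\mathfrak U$ and $\mathfrak L$ are treated in the same spirit: each Schur level produces an expression involving the inverse of a lower-right principal submatrix of the current level, which by the formal inversion formula is itself a quasideterminant; iterating and applying Corollary~\ref{HPAugmented} at every level collapses the nested expression into the single flat quasideterminant $|X^{1\ldots\hat k\ldots u,\,1\ldots(u-1)}|_{ku}$, and symmetrically for $\mathfrak L$. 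The main obstacle will be precisely this combinatorial bookkeeping — verifying that after $\min(k,u)-1$ recursive levels the accumulated deleted rows and columns form exactly the sets $\{1,\ldots,u\}\setminus\{k\}$ and $\{1,\ldots,u-1\}$, with the surviving row index $k$ and column index $u$ playing the role of the quasideterminant position. Once this indexing is verified, the UDL decomposition $X=UDL$ with $U=\mathfrak U D^{-1}$ and $L=D^{-1}\mathfrak L$ follows immediately from the formulas for $\mathfrak U, D, \mathfrak L$.
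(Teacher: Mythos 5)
Your regularity criterion and the bare existence of the factorization are handled correctly, and in the same spirit as the paper: an induction on $p$ powered by the $2\times 2$ Schur identity (your direct argument for the ``only if'' part, truncating $X=UDL$ to the trailing blocks, is in fact cleaner than the paper's). The difference is the corner from which you recurse. The paper groups the leading blocks and peels off the \emph{bottom-right} block, so the induction runs on the Schur complement $|\mathfrak{X}|_{11}$, and the heredity principle (\ref{HP1}) together with Corollary \ref{HPAugmented} is exactly what flattens the resulting nested quasideterminants into the entries displayed in $\mathfrak{U}$, $D$, $\mathfrak{L}$. You peel off the \emph{top-left} block and recurse on the honest submatrix $X^{1,1}$; this makes the diagonal identification immediate (no heredity is needed there, since $|X^{1,1}|_{22}$, $|X^{12,12}|_{33},\ldots$ are quasideterminants of genuine submatrices of $X$, so your appeal to heredity for $D_{kk}$ is superfluous), but it pushes all of the real work onto the off-diagonal blocks.

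That is where the gap lies. With your recursion, $U=\hat U\,\mathrm{diag}(\I,U')$, so for $u\ge 2$ one gets $\mathfrak{U}_{1u}=(UD)_{1u}=\bigl(r_1^1\,(X^{1,1})^{-1}U'D'\bigr)_u=\bigl(r_1^1\,(L')^{-1}\bigr)_u$: the object to be identified with $|X^{2\ldots u,\,1\ldots(u-1)}|_{1u}$ is a block of the inverse of the lower unitriangular \emph{factor} of $X^{1,1}$, not a quasideterminant of a Schur complement with rows and columns deleted, so Corollary \ref{HPAugmented} does not ``collapse the nested expression at every level'' as you assert. What is needed instead is a supplementary identity, e.g.\ $\mathfrak{U}=XL^{-1}$ combined with the observation (coming from the trailing factorizations you proved in the ``only if'' step) that the block column of $\bigl(L^{1\ldots(u-1),1\ldots(u-1)}\bigr)^{-1}$ at position $u$ equals $\bigl(X^{1\ldots(u-1),1\ldots(u-1)}\bigr)^{-1}_{\bullet u}D_{uu}$, after which the block inversion formula (\ref{FormalInverse})/(\ref{InvForm}) turns $\sum_{v\ge u}X_{kv}\bigl(X^{1\ldots(u-1),1\ldots(u-1)}\bigr)^{-1}_{vu}D_{uu}$ into $X_{ku}-r_k\,(X^{1\ldots u,1\ldots u})^{-1}c_u=|X^{1\ldots\hat k\ldots u,\,1\ldots(u-1)}|_{ku}$, and symmetrically for $\mathfrak{L}$. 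You explicitly flag this verification as ``the main obstacle'' and leave it undone, yet the formulas for $\mathfrak{U}_{ku}$ and $\mathfrak{L}_{uk}$ are part of the statement, so as it stands the proposal proves the regularity criterion and the diagonal $D$ but not the displayed $\mathfrak{U}$ and $\mathfrak{L}$. Either complete the computation along the lines above, or run the induction from the bottom-right corner as the paper does, where the off-diagonal entries fall out of the $2\times 2$ step applied to $|\mathfrak{X}|_{11}$ and heredity does the flattening for you.
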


\begin{proof} The cases $p=1$ and $p=2$ are straightforward. Indeed, if $X$ is an invertible $2\times 2$ block matrix, if we write for simplicity $A$ (resp., $B,C,D$) instead of $X_{11}$ (resp., $X_{12}, X_{21}, X_{22}$), denote identity blocks by $\mathbb{I}$, and if submatrix $D$ is invertible, we have
\beas \label{UDL2} \lp \begin{array}{cc} A & B \\[4pt] C & D
\end{array} \rp & = & \lp \begin{array}{cc} \mathbb{I} & BD^{-1}
\\[4pt] 0 & \mathbb{I}
\end{array} \rp \lp \begin{array}{cc} A-BD^{-1}C & 0 \\[4pt] 0 & D
\end{array} \rp
\lp \begin{array}{cc} \mathbb{I} & 0 \\[4pt] D^{-1}C & \mathbb{I}\end{array} \rp\\[4pt]
& = & \lp \begin{array}{cc}  A-BD^{-1}C & B \\[4pt] 0 & D \end{array}
\rp \lp \begin{array}{cc}  A-BD^{-1}C & 0 \\[4pt] 0 & D \end{array} \rp
^{-1} \lp \begin{array}{cc}  A-BD^{-1}C  & 0 \\[4pt] C & D \end{array}
\rp\eeas (note that the first equality is valid even if $X$ is not necessarily invertible). Conversely, if $X$ is regular, its UDL
decomposition is necessarily the preceding one, so that $D$ is
actually invertible.\medskip

For $p=3$, i.e., for an invertible $3\times 3$ block matrix $X$
such that $X^{1,1}, X^{12,12}=X_{33}$ are invertible, the UDL part of Proposition \ref{UDL} states that matrix $X$ is given by
\beas\small{ \lp \begin{array}{ccc} \mathbb{I} &
|X^{2,1}|_{12}|X^{1,1}|_{22}^{-1} & X_{13}X_{33}^{-1} \\[4pt]
&\mathbb{I} & X_{23}X_{33}^{-1}\\[4pt]
&&\mathbb{I} \end{array}
\rp
\lp
\begin{array}{ccc}|X|_{11} && \\[4pt]
 & |X^{1,1}|_{22} & \\[4pt]
 && X_{33}\end{array}
\rp\lp
\begin{array}{ccc}
\mathbb{I} & &\\[4pt]
|X^{1,1}|_{22}^{-1}|X^{1,2}|_{21}& \mathbb{I} &\\[4pt]
X_{33}^{-1}X_{31} & X_{33}^{-1}X_{32} & \mathbb{I}
\end{array}\rp}\,.\eeas

\vspace{5mm}\noindent Observe first that the proven UDL
decomposition for $p=2$, applied to $X^{1,1}$, entails that
$|X^{1,1}|_{22}$ is invertible. It is now easily checked that all
the expressions involved in the preceding $3\times 3$ matrix
multiplication make sense (in particular the quasideterminants of
the rectangular matrices $X^{2,1}$ and $X^{1,2}$ are
well-defined). The fact that this product actually equals $X$ is
proved by means of a
$$ \lp
\begin{array}{cc} 2\times 2 & 2\times1\\[4pt]
 1\times 2 & 1\times 1 \end{array} \rp$$
 redivision. The result
is then a consequence of two successive applications of the
aforementioned $2\times 2$ UDL decomposition and of the heredity
principle.\medskip

\noindent The ${\frak U}D^{-1}{\frak L}$ decomposition of $X$ follows immediately from its just proven UDL decomposition (again, invertibility of $X$ is not needed for the proof of the UDL decomposition). Conversely, if matrix $X$ is regular, we see that $|X^{1,1}|_{22}$ and $X_{33}$ are invertible, then, from the case $p=2$, that $X^{1,1}$ is invertible. The passage from $p>2$ to $p+1$ is similar to the passage from $p=2$ to $p+1=3.$
\end{proof}

\subsection{Explicit Formula in Terms of Quasideterminants}

Recall that a degree 0 $(\Z _2)^n_{\;0}$-graded matrix $X \in
\textsf{M}^0(\mathbf{r}_0;A)$ is a $q \times q$ block matrix,
where $q:=2^{n-1}$ is the order of $(\Z _2)^n _{\; 0}$. The
entries of a block $X_{ku}$ of $X$ are elements of $A^{\zg_k +
\zg_u}$. Every such matrix $X$ admits (formally) an UDL
decomposition with respect to its block structure. \medskip

It follows from Proposition \ref{UDL} that, if the graded determinant of any
$X\in\textsf{M}^0(\mathbf{r}_0;A)$ exists, then it is
equal to
\be \label{gdetExpl} \gdet(X)= \prod_{k=0}^{q-1}\det |
X^{1\ldots k \, , \, 1\ldots k }|_{k+1\;k+1}\,. \ee
Hence, the graded determinant is (formally) unique. Observe that for
$n=1$ and $n=2$ it coincides with the classical determinant (for
$n=2$, see UDL decomposition). This observation is natural, since the
entries of the considered matrices are in these cases elements of
a commutative subalgebra of $A$. Note also that the graded
determinant defined by Equation (\ref{gdetExpl}) satisfies
Conditions (2) and (3) of Theorem \ref{thmgdet}. To prove (formal)
existence, it thus suffices to check Condition (1) (for
$n>2$).\medskip

The proof of multiplicativity of $\gdet$
given by (\ref{gdetExpl}) is based on an induction on $n$ that
relies on an equivalent inductive expression of $\gdet$.

This expression is best and completely understood, if detailed for
low $n$, e.g., $n=3$. As recalled above, a purely even
$(\Z_2)^{3}$-graded matrix $X$ of degree $0$ is a $4 \times 4$
block matrix. The degrees of the blocks of $X$ are
\begin{equation} \left(
 \begin{array}{cc|cc}
 {000}\Top &{011}&{101}&{110}\\[6pt]
 {011}&000&{110}&{101}\\[6pt]\hline
 {101}\Top &{110}&{000}&{011}\\[6pt]
 {110}&{101}&{011}&{000}\\[6pt]
 \end{array}
 \right)\,,
\end{equation}

\noindent so that, if we consider the suggested $2\times 2$
redecomposition \be\label{FirstDecomposition} \frak{X}=\left(
 \begin{array}{c|c}
 \frak{X}_{11}\Top & \frak{X}_{12}\\[6pt]
 \hline
 \frak{X}_{21}\Top & \frak{X}_{22}\\[6pt]
 \end{array}
 \right)\ee

\noindent of $X$, the quasideterminant $\left| \frak{X}
\right|_{11}$ and the block $\frak{X}_{22}$ (as well as products
of the type $\frak{X}_{12}\frak{X}_{22}^{-1}\frak{X}_{21}$,
$\frak{X}_{12}\frak{X}_{21},\ldots$ -- this observation will be
used below) can be viewed as purely even $(\Z_2)^{n-1}$-graded
matrices of degree $0$. It follows that the inductive expression
\be \gdet(X)= \gdet(\left| \frak{X} \right|_{11})\cdot
\gdet(\frak{X}_{22}) \label{recgdet}\ee actually makes sense. To
check its validity, observe that the {\small RHS} of
(\ref{recgdet}) reads, for $n=3$,
$$\det||\frak{X}|_{11}|_{11}\cdot\det|(|\frak{X}|_{11})^{1,1}|_{22}\cdot\det|\frak{X}_{22}|_{11}\cdot\det|(\frak{X}_{22})^{1,1}|_{22}\,,$$
where the indices in $|\frak{X}|_{11}$ and $\frak{X}_{22}$ (resp.,
the other indices) correspond to the $2\times 2$ redecomposition
$\frak X$ of $X$ (resp., the $2\times 2$ decomposition of
$|\frak{X}|_{11}$ and $\frak{X}_{22}$). When writing this result
using the indices of the $4\times 4$ decomposition of $X$, as well
as the Heredity Principle, see Equations (\ref{HP1}) and
(\ref{HP+}), we get
$$\det|X|_{11}\cdot\det|X^{1,1}|_{22}\cdot\det|X^{12,12}|_{33}\cdot\det|X^{123,123}|_{44}
=\gdet(X)\,.$$

Let us mention that the {\small LHS} and {\small RHS} $\gdet$-s in
Equation (\ref{recgdet}) are slightly different. The determinant
in the {\small LHS} is the graded determinant of a purely even
$(\Z_2)^{n}$-graded matrix, whereas those in the {\small RHS} are
graded determinants of purely even $(\Z_2)^{n-1}$-graded matrices.

\vspace{4mm} To prove multiplicativity of the graded determinant
$\gdet$ defined by (\ref{gdetExpl}) and (\ref{recgdet}),
we need the next lemma.

\begin{lem} \label{AB BA}
Let $X$ and $Y$ be two $(\Z_2)^{n}_{\;0}$-graded matrices of degree $0$ of the same
dimension. If $\frak{X}_{12}$ or $\frak{Y}_{21}$ (see Equation (\ref{FirstDecomposition})) is elementary,
i.e., denotes a matrix that contains a unique nonzero element, then
\be \label{presyl} \gdet\lp
\I+\frak{X}_{12}\frak{Y}_{21}\rp= \gdet \lp \I
+\frak{Y}_{21}\frak{X}_{12}\rp\,. \ee
\end{lem}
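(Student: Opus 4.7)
By the symmetry of the statement, I may assume that $M := \frak{X}_{12}$ is the elementary factor, with unique nonzero entry $m$ at scalar position $(i,j)$; write $N := \frak{Y}_{21}$. A direct calculation yields
$$(MN)_{kl} = m\,N_{jl}\,\delta_{ki}, \qquad (NM)_{kl} = N_{ki}\,m\,\delta_{lj},$$
so $\I + MN$ is the identity perturbed only along row $i$, while $\I + NM$ is perturbed only along column $j$. The plan is to compute both graded determinants explicitly via formula~(\ref{gdetExpl}) and observe that both evaluate to $1 + m\,N_{ji}$.

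\textbf{Commutation of $m$ and $N_{ji}$.} Let $a$ (resp.~$b$) denote the block row (resp.~block column) of $X$ containing the scalar index $i$ (resp.~$j$); explicitly $a\in\{1,\dots,q/2\}$ and $b\in\{q/2+1,\dots,q\}$. Since $X$ and $Y$ are homogeneous of degree $0$, both $m$ and $N_{ji}$ have degree $\zg_a + \zg_b$. Because $A$ is purely even, this common degree lies in $(\Z_2)^n_{\;0}$, so $\la\zg_a+\zg_b,\zg_a+\zg_b\ra = 0$, and graded commutativity yields $m\,N_{ji} = N_{ji}\,m$.

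\textbf{Evaluation via~(\ref{gdetExpl}).} The key observation is that for each $k$ at least one of the off-diagonal factors of the Schur complement $|(\I+MN)^{1\ldots k,\,1\ldots k}|_{k+1,k+1}$ vanishes: when $k+1 \neq a$ the whole block row $(k+1)$ of $MN$ (and hence of the submatrix off its diagonal) is zero, so $X_{k+1,*} = 0$; when $k+1 = a$ the block column $a$ of $MN$ is supported entirely inside block row $a$ itself, so $X_{*,a} = 0$ for $* > a$. Either way the correction $X_{k+1,*}(X_{*,*})^{-1}X_{*,k+1}$ is zero, and the Schur complement reduces to the plain diagonal block $(\I+MN)_{k+1,k+1}$. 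For $k+1\neq a$ this is the identity, contributing a factor of $1$; for $k+1 = a$ it is the identity with row $i$ modified to $e_i + m\,N_{j,\bullet}$, whose classical determinant over the commutative subalgebra $A^0$ is $1 + m\,N_{ji}$ by the matrix determinant lemma. Hence $\gdet(\I+MN) = 1 + m\,N_{ji}$. A strictly parallel argument for $\I+NM$, now with the distinguished index being the block column $b$ and with $NM$ concentrated in column $j$, gives $\gdet(\I+NM) = 1 + N_{ji}\,m$. The commutation above closes the identity.

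\textbf{Main obstacle.} The substantive point is the structural bookkeeping ensuring the vanishing of the cross-term in each Schur complement, which rests on the fact that $MN$ (resp.~$NM$) is concentrated in a single block row (resp.~block column). Once this is in hand, the entire identity reduces to a single application of graded commutativity together with the classical determinant of an identity plus a rank-one modification over the commutative ring~$A^0$.
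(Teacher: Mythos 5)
Your proposal is correct and follows essentially the same route as the paper: compute $\gdet(\I+\frak{X}_{12}\frak{Y}_{21})=1+m\,N_{ji}$ and $\gdet(\I+\frak{Y}_{21}\frak{X}_{12})=1+N_{ji}\,m$ directly from the quasideterminant product formula (\ref{gdetExpl}), using that the product matrix is the identity perturbed in a single row (resp.\ column), and then conclude by commutativity of the two equal even-degree elements. The only difference is cosmetic: you spell out the vanishing of the Schur-complement correction terms, which the paper leaves as an implicit consequence of (\ref{gdetExpl}).
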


    \begin{proof}
    In view of the above remarks, it is clear that both graded determinants are (formally) defined.
    Assume now that $\frak{X}_{12}$ is
    elementary and has dimension $R\times S$, use numerations
    from 1 to $R$ and 1 to $S$, and denote the position of the unique nonzero element
    $x$ by $(r,s)$.

   One has:
    $$
\I+\frak{X}_{12}\frak{Y}_{21}= \left(
 \begin{array}{ccccc}
 1&&&&\\[6pt]
 &\ddots&&&\\[6pt]
 x{Y}_{s1}&\hdots&\boxed{\begin{array}{ccccc}1&&&&\\&\ddots &&&\\\hdots&& 1+x Y_{sr}&&\hdots\\&&&\ddots&\\&&&& 1\end{array}}&\hdots &xY_{sR}\\[6pt]
 &&&\ddots&\\[6pt]
 &&&& 1
 \end{array}
 \right)\,,$$\normalsize

\noindent where the element $1+xY_{sr}$ is located at position
$(r,r)$, and Equation (\ref{gdetExpl}) entails that
$$\gdet\lp
\I+\frak{X}_{12}\frak{Y}_{21}\rp=1+xY_{sr}\,.$$ A similar
computation shows that $$\gdet\lp
\I+\frak{Y}_{21}\frak{X}_{12}\rp=1+Y_{sr}x\,.$$

\noindent Since the elements $x$ at position $(r,s)$ in
$\frak{X}_{12}$ and $Y_{sr}$ at position $(s,r)$ in
$\frak{Y}_{21}$ have the same even degree, they commute.
\end{proof}

We are now prepared to prove multiplicativity of $\gdet$. Let us
begin by stressing that we will have to consider $2^n\times 2^n$
UDL decompositions $X_UX_DX_L$, as well as $2\times 2$ UDL
decompositions ${\frak X}_U{\frak X}_D{\frak X}_L$. Whereas an
upper unitriangular matrix ${\frak X}_U$ is also an upper
unitriangular matrix $X_U$ (and similarly for lower unitriangular
matrices), a diagonal matrix $X_D$ is also of the type ${\frak
X}_D$ (but the converses are not valid). Such details must of
course be carefully checked in the following proof, but, to
increase its readability, we refrain from explicitly mentioning
them.\medskip

Assume now that multiplicativity holds true up to $n$ ($n\ge 2$)
and consider the case $n+1$. If $X,Y$ denote
$(\Z_2)^{n+1}_0$-graded matrices, we need to show that
$\gdet(XY)=\gdet(X)\cdot\gdet(Y)$.
\medskip

(i) Let first $Y$ be lower unitriangular and set $X=X_UX_DX_L.$
Since $X_LY$ is again lower unitriangular, we have
$$\gdet(XY)=\gdet(X_D)=\gdet(X)\cdot\gdet(Y)\;.$$\vspace{1mm}

(ii) Assume now that $Y$ is diagonal,
$$
Y=\left(\begin{array}{cc}\frak{Y}_{1}&\\&\frak{Y}_{2}\end{array}\right)\;,
$$
where $\frak{Y}_1$ and $\frak{Y}_2$ are $(\Z_2)^{n}_0$-graded, and
let $X={\frak X}_U{\frak X}_D{\frak X}_L,$
$$
{\frak X}_D=\left(
\begin{array}{cc} \frak{X}_{1} &  \\[4pt]  & \frak{X}_{2} \end{array}\right),
\qquad
{\frak X}_{L}=\lp \begin{array}{cc} \I &  \\[4pt]
\frak{X}_{3} & \I
\end{array} \rp.  $$
Then, $ XY={\frak X}_{U}\,\lp{\frak X}_{D}\,{\frak Y}\rp\;{\frak
Z}\;, $ with \beas
{\frak Z}=\lp \begin{array}{cc} \I & \\[4pt] \lp
\frak{Y}_{2}\rp^{-1}\frak{X}_{3} \, \frak{Y}_{1} & \I
\end{array} \rp\;. \eeas
Since ${\frak X}_D{\frak Y}$ is block-diagonal, we get
$$\gdet(XY)=\gdet\lp \begin{array}{cc}
\frak{X}_{1}\frak{Y}_1 &  \\[4pt]  & \frak{X}_{2}\frak{Y}_2 \end{array}
\rp=\gdet(\frak{X_1Y_1})\cdot\gdet(\frak{X_2Y_2})$$
$$=\gdet({\frak X}_D)\cdot\gdet(Y)=\gdet(X)\cdot\gdet(Y)\;,
$$\vspace{0.005mm}

\noindent by induction.\bigskip

(iii) Let finally $Y$ be upper unitriangular. It is easily checked
that $Y$ can be written as a finite product of matrices of the
form
   \be \label{elemform} \lp\begin{array}{cc} \I & \frak{E} \\  & \I\end{array}\rp\,,
   \qquad
   \lp\begin{array}{cc} \frak{U} &  \\ & \I\end{array}\rp
   \quad
   \text{and}
   \quad
   \lp\begin{array}{cc} \I & \\  & \frak{U}\end{array}\rp\;, \ee
where $\frak{U}$ is upper unitriangular and $\frak{E}$ is
elementary. It thus suffices to consider a matrix $Y$ of each one
of the preceding ``elementary forms''. Moreover, it also suffices
to prove multiplicativity for a lower unitriangular $X$.\medskip

Set
$$X=\lp\begin{array}{cc}\frak{X}_1&\\\frak{X}_3&\frak{X}_2\end{array}\rp\;,$$
where $\frak{X}_1,\frak{X}_2$ are lower unitriangular.\medskip

(a) If $Y$ is of the first above elementary form,
$$XY=\lp\begin{array}{cc}\frak{X_1}&\frak{X_1E}\\[4pt]
\frak{X_3}&\frak{X_2+X_3E}\end{array}\rp\;.$$ Hence, using the
induction and Lemma \ref{AB BA}, we get
$$
\begin{array}{rcl}
\gdet(XY)&=&
\gdet(\frak{X_2+X_3E})\cdot\gdet(\frak{X_1-X_1E(X_2+X_3E)^{-1}X_3})\\[4pt]
&=&\gdet(\frak{X_1})\cdot\gdet(\frak{X_2+X_3E})
\cdot\gdet(\frak{\I-E(X_2+X_3E)^{-1}X_3})\\[4pt]
&=&\gdet(\frak{X_1})\cdot\gdet(\frak{X_2+X_3E})
\cdot\gdet(\frak{\I-(X_2+X_3E)^{-1}X_3E})\\[4pt]
&=&\gdet(X)\cdot\gdet(Y)\,.
\end{array}
$$

(b) If $Y$ is of the second elementary form,
$$XY=\lp\begin{array}{cc}\frak{X_1U}& \\ \frak{X_3U}& \frak{X_2}\end{array}\rp\;,$$
then
$$\gdet(XY)=\gdet(\frak{X_2})\cdot\gdet(\frak{X_1})\cdot\gdet(\frak{U})
=\gdet(X)\cdot\gdet(Y)\;.$$

(c) If $Y$ is of the last form, the proof of multiplicativity is
analogous to that in (b).\medskip

This completes the proof of multiplicativity and thus of the
formal existence and uniqueness of the graded determinant.

\subsection{Polynomial Structure}

\subsubsection{Quasideterminants and Homological Relations}

Let as above $X\in\textsf{M}^0(\mathbf{r}_0;A)$, let all the
components of $\mathbf{r}_0$ be $1$ (or 0), and set
$r_0=|\,\mathbf{r}_0|$. We will need the following lemma.

\begin{lem} \label{Lemma1}
For $r\neq i$ and $s \neq j$, we have \bea  \label{HomRel+}
\bigl|X\bigr|_{ij} \bigl|X^{i,j}\bigr|_{rl}  =  \pm\;
\bigl|X\bigr|_{il} \bigl|X^{i,l}\bigr|_{rj} &\mbox{ and }&
\bigl|X\bigr|_{ij} \bigl|X^{i,j}\bigr|_{ks}  =  \pm\;
\bigl|X\bigr|_{kj} \bigl|X^{k,j}\bigr|_{is}\;. \eea
\end{lem}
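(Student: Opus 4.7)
My plan is to reduce both identities to algebraic identities in an auxiliary $2\times2$ matrix via the Heredity Principle, and then verify them directly using the graded commutativity of $A$. The two identities are symmetric under interchange of the roles of rows and columns, so I would treat the first (varying the column) carefully; the second is entirely analogous.

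First I would permute the rows and columns of $X$ so that rows $i,r$ and columns $j,l$ occupy the last two positions; this relabelling does not affect any of the quasideterminants in the statement, since indices follow the entries. Viewing the resulting matrix $\tilde X$ as a $2\times 2$ block matrix whose $(1,1)$-block is $X^{ir,jl}$, a direct application of Definition~\ref{defQuasidet} shows that the $(2,2)$-block of $|\tilde X|_{22}$ is the $2\times 2$ matrix with entries
$$
Y_{\alpha\beta} \;:=\; x_{\alpha\beta} - r_\alpha^{jl}\,(X^{ir,jl})^{-1}\,c_\beta^{ir}, \qquad (\alpha,\beta)\in\{i,r\}\times\{j,l\}.
$$
The Heredity Principle then gives
$$
|X|_{ij} \,=\, Y_{ij} - Y_{il}Y_{rl}^{-1}Y_{rj}
\quad\text{and}\quad
|X|_{il} \,=\, Y_{il} - Y_{ij}Y_{rj}^{-1}Y_{rl},
$$
while Corollary~\ref{HPAugmented} yields $|X^{i,j}|_{rl}=Y_{rl}$ and $|X^{i,l}|_{rj}=Y_{rj}$ (apply the corollary with the bottom-right $2\times 2$ block as block $k$, and observe that after deleting row $i$ and column $j$ the complementary submatrix is still $X^{ir,jl}$). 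Hence the first identity of the lemma is equivalent to the $2\times 2$ identity
$$
(Y_{ij} - Y_{il}Y_{rl}^{-1}Y_{rj})\,Y_{rl} \;=\; \pm\,(Y_{il} - Y_{ij}Y_{rj}^{-1}Y_{rl})\,Y_{rj}.
$$

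To establish this last identity I would expand both sides and use the graded commutativity~(\ref{GCon}) to slide $Y_{rj}$ past $Y_{rl}^{-1}$ at the cost of a sign $\sigma := (-1)^{\la \tilde Y_{rj},\,\tilde Y_{rl}\ra}$, using $\widetilde{Y_{rl}^{-1}} = \tilde Y_{rl}$ in $(\Z_2)^n$; this gives $Y_{il}Y_{rl}^{-1}Y_{rj}Y_{rl} = \sigma\,Y_{il}Y_{rj}$, and symmetrically on the right. Both sides then reduce to binomials in the monomials $Y_{ij}Y_{rl}$ and $Y_{il}Y_{rj}$, and a direct comparison produces the claimed equality with explicit sign $\pm = -\sigma$. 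The only point requiring care is the homogeneity of each $Y_{\alpha\beta}$, which is needed to apply~(\ref{GCon}): since $X$ has degree $0$ and the inverse $(X^{ir,jl})^{-1}$ preserves the bigrading, each $Y_{\alpha\beta}$ is homogeneous of the same weight $\gamma_{k(\alpha)}+\gamma_{k(\beta)}$ as $x_{\alpha\beta}$, so the sliding argument is legitimate. The main bookkeeping obstacle I anticipate is tracking signs through the commutation step, but fortunately the lemma only asserts the existence of a $\pm$ sign rather than its precise value, which simplifies the accounting considerably.
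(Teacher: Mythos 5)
Your argument is correct, but it follows a genuinely different route from the paper's. The paper's proof is essentially a citation: it invokes the equivalent description $|X|_{ij}=\bigl((X^{-1})_{ji}\bigr)^{-1}$ and the homological relations of \cite{GGRW05}, namely $|X|_{ij}\,|X^{i,l}|_{rj}^{-1}=-|X|_{il}\,|X^{i,j}|_{rl}^{-1}$ and its row analogue, and then observes (by an induction on the matrix dimension, stated without details) that every quasideterminant is a homogeneous element of $A$, so that graded commutativity lets one move the inverses to the other side and obtain (\ref{HomRel+}) up to sign. You instead rederive the relations from scratch: after moving rows $i,r$ and columns $j,l$ into a terminal $2\times 2$ block, the Heredity Principle and Corollary \ref{HPAugmented} identify $|X|_{ij}$, $|X|_{il}$, $|X^{i,j}|_{rl}$, $|X^{i,l}|_{rj}$ with quasideterminants and entries of the $2\times 2$ Schur complement $Y$, and the identity reduces to a two-line computation in $A$ with the explicit sign $-(-1)^{\la \widetilde{Y}_{rj},\widetilde{Y}_{rl}\ra}$; this is in effect a self-contained proof of the homological relations in the graded setting. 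What each approach buys: the paper's is shorter and rests on known results valid over any ring, with homogeneity as the only new input; yours is independent of the literature and produces the sign explicitly, at the price of two auxiliary facts you use without proof --- invariance of quasideterminants under row/column permutations (standard, though not stated in the paper) and the fact that $(X^{ir,jl})^{-1}$ is again bigraded, so that each $Y_{\alpha\beta}$ is homogeneous of weight $w_\alpha+w_\beta$. The latter is exactly parallel to the paper's own unproved induction that quasideterminants are homogeneous, and both can be settled by the same induction (e.g.\ via the quasideterminant formula for the entries of the inverse), so your level of rigour matches the paper's; note also that, like the paper, you work formally, assuming invertibility of $X^{ir,jl}$, $Y_{rl}$ and $Y_{rj}$, and that the degenerate cases $l=j$ (resp.\ $k=i$) are vacuous since the quasideterminants in (\ref{HomRel+}) are then not defined.
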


\begin{proof}
The result is a consequence of an equivalent definition of
quasideterminants and of the \textit{homological relations}
\cite{GGRW05}. More precisely, the quasideterminant $|{X}|_{ij}$
can be defined by $|{X}|_{ij}=({X}^{-1})_{ji}^{-1}$. It follows
that Definition \ref{defQuasidet} of quasideterminants reads
$$
\left|X\right|_{ij}= x_{ij}-\sum_{\substack{a\neq i \\ b\neq j}}
x_{ib}\left|X^{i,j}\right|_{ab}^{-1} x_{aj}\;.
$$
An induction on the matrix dimension then shows that any
quasideterminant $|X|_{ij}$ is homogeneous. Hence, the mentioned
homological relations \bea \bigl|X\bigr|_{ij}
\bigl|X^{i,l}\bigr|_{rj}^{-1} = -\bigl|X\bigr|_{il}
\bigl|X^{i,j}\bigr|_{rl}^{-1} & \mbox{ and } &
\bigl|X^{k,j}\bigr|_{is}^{-1} \bigl|X\bigr|_{ij} =
-\bigl|X^{i,j}\bigr|_{ks}^{-1} \bigl|X\bigr|_{kj}\;, \eea valid
for $r\neq i, s\neq j$, are equivalent to (\ref{HomRel+}).
\end{proof}

\begin{prop}\label{Arbitrary}
Set $$D(X)=|X|_{11}|X^{1,1}|_{22}\cdot\ldots\cdot x_{r_0{}r_0}$$
and let $(i_1 , \ldots , i_{r_0})$, $(j_1, \ldots ,j_{r_0})$ be
two permutations of $(1,\ldots ,r_0)$. Then,
$$ D(X)= \pm\; |X|_{i_1j_1}|X^{i_1,j_1}|_{i_2j_2}\cdot\ldots\cdot x_{i_{r_0}j_{r_0}}=\pm\; |X|_{i_1j_1}D(X^{i_1,j_1})\;.$$
\end{prop}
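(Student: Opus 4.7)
The plan is to prove both equalities by induction on $r_0$, with the base case $r_0=1$ trivial. The two main tools will be the homological relations of Lemma~\ref{Lemma1} and the heredity principle~\eqref{HP1} together with Corollary~\ref{HPAugmented}: Lemma~\ref{Lemma1} lets me transpose an adjacent pair of indices in the leading position of the product, while heredity lets me perform such a transposition deep inside a nested quasideterminant.

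The key step is the following ``leading swap'' identity: for distinct $i_1,i_2$ and distinct $j_1,j_2$,
$$
|X|_{i_1 j_1}|X^{i_1,j_1}|_{i_2 j_2}=\pm\,|X|_{i_2 j_2}|X^{i_2,j_2}|_{i_1 j_1}.
$$
I would prove this by two successive applications of Lemma~\ref{Lemma1}: first use the column-swap relation (first identity of \eqref{HomRel+}) with $(i,j,r,l)=(i_1,j_1,i_2,j_2)$ to reach $\pm|X|_{i_1 j_2}|X^{i_1,j_2}|_{i_2 j_1}$, then the row-swap relation (second identity of \eqref{HomRel+}) with $(i,j,k,s)=(i_1,j_2,i_2,j_1)$ to reach the right-hand side. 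To propagate this to a transposition of the $k$-th and $(k{+}1)$-st pairs in the full defining product of $D$, I would rewrite, via heredity, the relevant quasideterminants as leading quasideterminants of the submatrix $X^{i_1\ldots i_{k-1},j_1\ldots j_{k-1}}$, and then apply the leading swap identity there; both outer factors (earlier quasideterminants and the tail $D$-type product) are unaffected, so the overall product changes only by a sign.

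Since any pair of permutations of $(1,\ldots,r_0)$ can be reached from the identity pair by a sequence of adjacent transpositions, iterating the previous step yields the first equality of the proposition. For the second equality, $D(X)=\pm|X|_{i_1 j_1}D(X^{i_1,j_1})$, I would first bring the pair $(i_1,j_1)$ into leading position using the leading swap, and then recognise the tail $|X^{i_1,j_1}|_{i_2 j_2}\cdot\ldots\cdot x_{i_{r_0}j_{r_0}}$ via heredity as a $D$-type expression for the $(r_0-1)\times(r_0-1)$ matrix $X^{i_1,j_1}$ in some ordering of its indices, equal to $\pm D(X^{i_1,j_1})$ by the inductive hypothesis. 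The main obstacle is purely bookkeeping: the indices $i_k,j_k$ refer to positions in the original matrix $X$, whereas the successive quasideterminants are formed over shrinking submatrices, so every invocation of heredity must be carefully aligned with the correct nested block decomposition before Lemma~\ref{Lemma1} is applied. Once these identifications are made, however, the signs absorb all of the delicate pieces and the argument reduces to the standard decomposition of a permutation into adjacent transpositions.
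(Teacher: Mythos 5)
Your ``leading swap'' identity is correct, and so is its propagation to depth $k$ (in fact no heredity is needed there: the $k$-th and $(k{+}1)$-st factors of the product are, by definition, leading quasideterminants of the submatrix $X^{i_1\ldots i_{k-1},\,j_1\ldots j_{k-1}}$, so one simply applies the identity to that submatrix). The gap is in the combinatorial closure claim. The only move you extract from Lemma~\ref{Lemma1} is a \emph{simultaneous} transposition of the $k$-th and $(k{+}1)$-st row--column \emph{pairs}. Such moves preserve the multiset of pairs $\{(i_1,j_1),\ldots,(i_{r_0},j_{r_0})\}$, hence the bijection $i_k\mapsto j_k$. Starting from $D(X)$, whose pairs are the diagonal ones $(k,k)$, you can therefore only reach products of the form $|X|_{\sigma(1)\sigma(1)}|X^{\sigma(1),\sigma(1)}|_{\sigma(2)\sigma(2)}\cdots$, i.e.\ the case $J=I$. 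The statement for two \emph{independent} permutations --- already for $r_0=2$, the equality $D(X)=\pm\,|X|_{12}|X^{1,2}|_{21}$ --- is unreachable by these moves, and for the same reason the step ``bring the pair $(i_1,j_1)$ into leading position'' fails whenever $j_1\neq i_1$: that pair does not occur among the pairs of $D(X)$. Your sentence ``any pair of permutations can be reached from the identity pair by adjacent transpositions'' is true only if the transpositions may act on the row sequence and the column sequence \emph{separately}, which your combined swap does not provide.

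The repair is to use the two homological relations of Lemma~\ref{Lemma1} as two independent elementary moves instead of composing them: the first identity of (\ref{HomRel+}), applied via your propagation argument inside $X^{i_1\ldots i_{k-1},\,j_1\ldots j_{k-1}}$, transposes $j_k\leftrightarrow j_{k+1}$ while fixing all row indices, and the second transposes $i_k\leftrightarrow i_{k+1}$ while fixing all column indices. Composing such moves takes $(\mathrm{id},\mathrm{id})$ to an arbitrary pair $(I,J)$, which gives the first equality; the second equality then follows by your induction on $r_0$ exactly as you describe, since the tail of the product is a general product of this type for $X^{i_1,j_1}$. With this correction your route is precisely the one the paper intends --- its proof of Proposition~\ref{Arbitrary} is the single sentence ``It suffices to use Lemma~\ref{Lemma1}'' --- so the approach is the same; only the reachability step needs the two relations kept separate.
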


    \begin{proof} It suffices to use Lemma \ref{Lemma1}.
    \end{proof}

\begin{prop}\label{PolyDTheo}
The product $D(X)$ is linear with respect to the rows and columns
of $X$.
\end{prop}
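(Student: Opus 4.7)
The plan is to exploit Proposition \ref{Arbitrary}, which allows us to rewrite $D(X)$ as $\pm\, |X|_{i_1 j_1}\, D(X^{i_1,j_1})$ for any choice of indices $(i_1,j_1)$. To prove linearity in row $k$, I would take $i_1=k$ together with any fixed column index $j$, obtaining
\[
 D(X) \;=\; \pm\, |X|_{k,j}\, D(X^{k,j})\,.
\]
The factor $D(X^{k,j})$ is manifestly independent of row $k$, since that row has been deleted. It therefore suffices to check that $|X|_{k,j}$ is linear in row $k$. Unwinding Definition \ref{defQuasidet},
\[
 |X|_{k,j} \;=\; x_{kj} - r_k^j\,(X^{k,j})^{-1}\, c_j^k\,.
\]
The entry $x_{kj}$ and the truncated row $r_k^j$ are linear in row $k$, while the submatrix $X^{k,j}$ and the truncated column $c_j^k$ do not involve row $k$ at all. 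Hence $|X|_{k,j}$, and consequently $D(X)$, is linear in row $k$.

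A symmetric argument with $j_1=k$ yields linearity in column $k$: writing $D(X) = \pm\,|X|_{i,k}\,D(X^{i,k})$ for any fixed row index $i$, the factor $D(X^{i,k})$ does not depend on column $k$, and $|X|_{i,k}$ is linear in column $k$ because $x_{ik}$ and the truncated column $c_k^i$ are, while the row $r_i^k$ and the inverse $(X^{i,k})^{-1}$ are not affected. Running over all $k$ in both cases gives the claimed multilinearity in rows and columns.

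No genuine difficulty is expected; the real content has already been packaged into Proposition \ref{Arbitrary} via the homological relations of Lemma \ref{Lemma1}. The point that would have been the main obstacle, had one attacked the original expression $D(X)=|X|_{11}|X^{1,1}|_{22}\cdots x_{r_0 r_0}$ directly, is that row $k$ appears inside many successive factors and in particular inside the inverses of submatrices, so linearity is completely obscured in that form. The flexibility afforded by Proposition \ref{Arbitrary} resolves this by letting us pull row $k$ (respectively column $k$) out at the very first step of the factorization, at which point linearity is immediate from the structural formula for a single quasideterminant.
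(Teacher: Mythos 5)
Your proof is correct, but it takes a genuinely different route from the paper's. You apply Proposition \ref{Arbitrary} once, choosing $(i_1,j_1)=(k,j)$ so that the row (resp.\ column) under scrutiny sits entirely in the single leading quasideterminant, and then read off linearity from Definition \ref{defQuasidet}: row $k$ enters $|X|_{kj}$ only through $x_{kj}$ and $r_k^j$, and it enters linearly, while $(X^{k,j})^{-1}$, $c_j^k$ and $D(X^{k,j})$ do not involve it at all. The paper instead argues by induction on $r_0$: it expands $|X|_{11}=x_{11}-\sum_{a\neq1,b\neq 1}x_{1b}\,|X^{1,1}|_{ab}^{-1}\,x_{a1}$, invokes Proposition \ref{Arbitrary} to rewrite $|X^{1,1}|_{ab}^{-1}=\pm\, D(X^{1a,1b})\,D^{-1}(X^{1,1})$, and so obtains the Laplace-type recursion $D(X)=x_{11}D(X^{1,1})\pm\sum_{a\neq1,b\neq1}x_{1b}\,D(X^{1a,1b})\,x_{a1}$, concluding by the induction hypothesis. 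Your version is shorter and makes linearity in each individual row and column immediate; what the paper's version buys is that the coefficients of the linear expansion are again (signed) $D$'s of submatrices, hence by induction themselves linear and ultimately polynomial. That recursive structure is what is really exploited when Theorem \ref{thmgdet}, Part (ii), passes from ``linear in the rows and columns'' to ``polynomial'': with your argument alone the coefficients a priori involve $(X^{k,j})^{-1}$ and are only rational expressions, so to recover polynomiality one must iterate your step over all rows and columns or fall back on the paper's expansion. For the Proposition as stated, i.e.\ bare linearity, your proof is complete.
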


    \begin{proof}
For $r_0=1$, the claim is obvious. For $r_0=2$, we obtain
$$ D(X)=D\!\lp \begin{array}{cc} x_{11} & x_{12} \\ x_{21} & x_{22} \end{array}\rp = \lp x_{11}-x_{12}x_{22}^{-1}x_{21} \rp  x_{22}= x_{11}x_{22} \pm x_{12}x_{21}\;.  $$

Assume now that the statement holds up to $r_0=n$ ($n\ge 2$) and
consider the case $r_0=n+1$. We have $$ D(X)= \left| X
\right|_{11} D\!\lp X^{1,1}\rp $$ and
$$  \left|X\right|_{11}= x_{11}-\sum_{\substack{a\neq 1 \\ b\neq 1}} x_{1b}\left|X^{1,1}\right|_{ab}^{-1} x_{a1}\;,  $$
with
$$ \left|X^{1,1}\right|_{ab}^{-1}= \pm\; D\!\lp X^{1a,1b} \rp D^{-1}\! \lp X^{1,1} \rp\;,$$
due to Proposition \ref{Arbitrary}. Therefore,
$$ D(X)= x_{11} D\!\lp X^{1,1} \rp \pm \sum_{\substack{a\neq 1 \\ b\neq 1}} x_{1b}D\!\lp X^{1a,1b} \rp x_{a1}\;. $$
By induction, the products $D\!\lp X^{1,1}\rp$ and $D\!\lp
X^{1a,1b} \rp$ are linear with respect to the rows and columns of
their arguments. Hence, the result.
    \end{proof}

\subsubsection{Preliminary Remarks}\medskip

(i) Consider a block matrix $$W=\left(\begin{array}{ccc}A & 0 & B \\
C & D & E \\ F & 0 & G\end{array}\right)$$ with square diagonal
blocks over a unital ring. A straightforward computation allows to
check that the formal inverse of $W$ is given by
\be\label{FormalInverse}W^{-1}= \left(\begin{array}{ccc}A' & 0 &
B'
\\[6pt] -D^{-1}(C A'+ E F') & D^{-1} & -D^{-1}(C B'+ E G')\\[6pt]
 F' & 0 &G'
 \end{array}\right)\,,\ee where $$\left(\begin{array}{cc}A'& B'\\[4pt]
F' & G'\end{array}\right):=\left(\begin{array}{cc}A & B \\[4pt] F &
G\end{array}\right)^{-1}\,.$$\medskip

(ii) Let $X\in\sss{M}^0(\mathbf{r}_0;A)$, denote by
$E_{\za\zb}(\zl)\in \sss{M}^0(\mathbf{r}_0;A)$ the elementary
matrix whose unique nonzero element $\zl\in A^{w_{\za}+w_{\zb}}$
is located at position $(\za,\zb)$, $\za\neq \zb$ (remember that
any row index $\za$ defines a unique block row index $k$ and
therefore a unique degree $w_{\za}:=\zg_k$), and set
$$
G_{\za\zb}(\zl):=\I+E_{\za\zb}(\zl)\in
\sss{M}^0(\mathbf{r}_0;A)\,.
$$
The rows of the product matrix $$X_{\op{red}}:=G_{\za\zb}(\zl)X\in
\sss{M}^0(\mathbf{r}_0;A)$$ (for an explanation of the notation $X_{\op{red}}$, see (for instance) Equation (\ref{MotivRed})) are the same than those of $X$, except
that its $\za$-th row is the sum of the $\za$-th row of $X$ and of
the $\zb$-th row of $X$ left-multiplied by $\zl$. Since $\gdet$ is
formally multiplicative and as it immediately follows from its
definition that $\gdet\lp G_{\za\zb}(\zl) \rp =1$, we get \be
\gdet(X)=\gdet(X_{\op{red}})\;. \label{GDetRed}\ee

(iii) In the following we write $X^{i:j}$, if we consider the
matrix obtained from $X$ by deletion of its $i$-th row
$(x_{i1},x_{i2}\ldots)$ and $j$-th column, whereas in $X^{i,j}$
the superscripts refer, as elsewhere in this text, to a block row
and column. Subscripts characterizing quasideterminants should be
understood with respect to the block decomposition.

\begin{lem}\label{x11Lem} Let $$X=\left(\begin{array}{c|c}x_{11} & \star \cdots  \star\\\hline 0&  \\ \vdots & X^{1:1}  \\ 0  &\end{array}\right)\in
\sss{M}^0(\mathbf{r}_0+\mathbf{e}_1;A)\,,$$ where
$\mathbf{e}_1=(1,0,\ldots, 0)$. Then,
$$\gdet(X)=x_{11}\gdet(X^{1:1})\,.$$\end{lem}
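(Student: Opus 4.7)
The plan is to invoke the explicit product formula $\gdet(X)=\prod_{k=0}^{q-1}\det|X^{1\cdots k,\,1\cdots k}|_{k+1,\,k+1}$ from Equation (\ref{gdetExpl}) and reduce the lemma to a single identity about Schur complements. I would work formally, i.e.\ assume invertibility of every submatrix whose inverse appears, since by Theorem \ref{thmgdet}(ii) both sides of the claimed equality are polynomial in the entries of $X$, so an identity proved on the dense open formal-invertibility locus automatically extends to all of $\sss{M}^0(\mathbf{r}_0+\mathbf{e}_1;A)$.

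For each $k\geq 1$, deleting the first $k$ block rows and block columns of $X$ removes the augmented first block entirely, and in particular kills the extra row and extra column; therefore $X^{1\cdots k,\,1\cdots k}=(X^{1:1})^{1\cdots k,\,1\cdots k}$, and the corresponding factors $\det|X^{1\cdots k,\,1\cdots k}|_{k+1,\,k+1}$ already coincide with those that appear in the product expression for $\gdet(X^{1:1})$. The lemma therefore reduces to the identity
$$\det|X|_{11}=x_{11}\cdot\det|X^{1:1}|_{11}\,.$$

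To prove this, I would expand $|X|_{11}=X_{11}-X_{1\bullet}^{1}(X^{1,1})^{-1}X_{\bullet 1}^{1}$ and use the hypothesis that every entry of the first column of $X$ below $x_{11}$ vanishes. This forces each off-diagonal block $X_{k1}$ with $k\geq 2$ to have vanishing first column, hence the same is true of the stacked block $X_{\bullet 1}^{1}$ and of the entire subtracted product. Consequently $|X|_{11}$ has the block form $\left(\begin{smallmatrix}x_{11} & \star\\ 0 & M\end{smallmatrix}\right)$; since its entries lie in the commutative subalgebra $A^0$, the ordinary determinant factors as $x_{11}\cdot\det M$. The key bookkeeping step is to identify $M$ with $|X^{1:1}|_{11}$: discarding the first row of $X_{1\bullet}^{1}$ (resp.\ first column of $X_{\bullet 1}^{1}$) recovers exactly $(X^{1:1})_{1\bullet}^{1}$ (resp.\ $(X^{1:1})_{\bullet 1}^{1}$), and $(X^{1:1})^{1,1}=X^{1,1}$, so the Schur complement collapses to the one defining $|X^{1:1}|_{11}$.

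The main obstacle I anticipate is precisely this Schur-complement identification: it amounts to checking that the operations ``augment the first block by one row/column of the special form above'' and ``form the quasideterminant at position $(1,1)$'' commute up to an explicit block-triangular correction. Once this is done, multiplying by the already-matched higher-$k$ factors gives $\gdet(X)=x_{11}\gdet(X^{1:1})$, where the multiplication makes unambiguous sense because $x_{11}$ and every factor of $\gdet(X^{1:1})$ live in the commutative algebra $A^0$.
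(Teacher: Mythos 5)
Your argument is correct and is essentially the paper's own proof: the product formula reduces the claim to $\det|X|_{11}=x_{11}\det\left|X^{1:1}\right|_{11}$, and this follows from the vanishing first column together with the identification $(|X|_{11})^{1:1}=|X^{1:1}|_{11}$, which is exactly the step the paper invokes (your ``bookkeeping'' Schur-complement computation is the same justification that underlies Corollary \ref{HPAugmented}); the paper also disposes separately of the trivial case where the first component of $\mathbf{r}_0$ vanishes, which your argument covers with the empty-matrix convention. One caveat: you justify the formal manipulations by citing Theorem \ref{thmgdet}(ii), but this lemma is itself an ingredient of the paper's proof of part (ii), so that appeal is circular as stated; it is also unnecessary, since the lemma is only needed (and in the paper only proved) as a formal identity --- the whole polynomiality argument is carried out formally and the resulting polynomial is then taken as the definition of $\gdet$ --- so you should simply drop the reference to (ii) and keep the formal computation.
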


\begin{proof} If the first component of $\mathbf{r}_0\in \N^q$ vanishes, the result is obvious. Otherwise, it suffices to remember
that $(|X|_{11})^{1:1}=|X^{1:1}|_{11}$, so that $$
|X|_{11}=\left(\begin{array}{c|c}x_{11} & * \cdots *\\\hline 0&
\\ \vdots & |X^{1:1}|_{11}  \\ 0 &\end{array}\right)\,.$$
Therefore,
$$\gdet(X)=\det \left|X\right|_{11} \gdet\lp X^{1,1}\rp= x_{11}
\det\left|X^{1:1}\right|_{11} \gdet\lp (X^{1:1})^{1,1}\rp = x_{11}
\gdet(X^{1:1})\,.$$ Hence the lemma.\end{proof}

\subsubsection{Proof of the Polynomial Character}

We are now ready to give the proof of Theorem \ref{thmgdet}, Part
(ii).

Fix $n$, so $q=2^{n-1}$ is fixed as well.
We first consider the case $\mathbf{r}_0\in\{0,1\}^{\times q}.$ Since the
quasideterminants in Definition (\ref{gdetExpl}) of $\gdet$ are
then quasideterminants over $A$ valued in $A^0$, we have
$\gdet(X)=D(X)$, we conclude that $\gdet(X)$ is linear
in the rows and columns of $X$, due to Proposition
\ref{PolyDTheo}.\medskip

To prove that $\gdet(X)$, where $X\in\sss{M}^0(\mathbf{r}_0;A)$, is
linear for any $\mathbf{r}_0\in\N^q$, it suffices to show that, if
$\gdet$ is linear for
$\mathbf{r}_0=(r_1,\ldots,r_{q})\in\{0,1,\ldots,R\}^{\times q}$,
$R\ge 1$, then it is linear as well for
$\mathbf{r}_0+\mathbf{e}_{\ell}=(r_1,\ldots,r_{\ell}+1,\ldots,r_{q})$,
with $r_{\ell}\neq 0$.\medskip

(i) We just mentioned that linearity of $\gdet$ with respect to
the rows and columns of its argument holds true for $R=1$.\medskip

(ii) Suppose now that it is valid for some $\mathbf{r}_0$ and some
$R\ge 1$.\medskip

(ii1) We first prove that linearity then still holds for
$\mathbf{r}_0+\mathbf{e}_1$. More precisely, we set $|\mathbf{r}|=r_1+\ldots
+r_{q}$, consider a matrix

\be X=\lp \begin{array}{cccc}
            x_{11} & x_{12} & \ldots & x_{1,N+1} \\
            \vdots & \vdots &        & \vdots \\
            x_{k1} & x_{k2} & \ldots & x_{k,N+1} \\[4pt]
            0       & x_{k+1,2}& \ldots & x_{k+1,N+1}\\
            \vdots & \vdots &           & \vdots \\
            0 & x_{N+1,2} & \ldots & x_{N+1,N+1}
         \end{array} \rp\in\sss{M}^0(\mathbf{r}_0+\mathbf{e}_1;A)\,,
         \label{kform}\ee\vspace{3mm}

\noindent and prove linearity of $\gdet(X)$ by induction on $k$.
To differentiate the two mentioned inductions, we speak about the
induction in $|\mathbf{r}|$ and in $k$.\medskip

(a) For $k=1$, Lemma \ref{x11Lem} yields
$\gdet(X)= x_{11} \gdet(X^{1:1})$ and the $|\mathbf{r}|$-induction assumption allows to conclude that $\gdet(X)$ is linear.\medskip

(b) If $k=2$, consider $G_{21}(-x_{21}x_{11}^{-1})$, where
$\zl=-x_{21}x_{11}^{-1}$ has the same degree as $x_{21}$. Matrix
\be\label{MotivRed}X_{\op{red}}=G_{21}(-x_{21}x_{11}^{-1})X\ee has the form
(\ref{kform}) with $k=1$. Indeed, its rows are those of $X$,
except for the second one, which reads
$$0\;,\;x_{22}-x_{21}x_{11}^{-1}x_{12}\;,\;x_{23}-x_{21}x_{11}^{-1}x_{13}\;,\;\ldots$$
Hence, by (\ref{GDetRed}) and (a), $$\gdet(X)=\gdet(X_{\op{red}})
=x_{11}\gdet(X_{\op{red}}^{1:1})$$ is linear in the rows and
columns of $X_{\op{red}}$. This means in fact that $\gdet(X)$ is
linear with respect to the rows and columns of $X$.\medskip

(c) Assume now that $\gdet(X)$, $X$ of the form (\ref{kform}), is
linear up to $k=\zk\ge 2$ and examine the case $k=\zk+1$. We use
the same idea as in (b), but have now at least two possibilities.
The matrix
$$X_{\op{red}}^1=G_{\zk+1,1}(-x_{\zk+1,1}x_{11}^{-1})\,X\quad
(\text{ resp., }\;
X_{\op{red}}^2=G_{\zk+1,2}(-x_{\zk+1,1}x_{21}^{-1})\,X\,)$$ has
the form (\ref{kform}) with $k=\zk$ and, in view of the
$k$-induction assumption, $\gdet(X)=\gdet(X_{\op{red}}^1)$ is
linear in the rows and columns of $X_{\op{red}}^1$ and thus
contains at the worst $x_{11}^{-1}$ (resp.,
$\gdet(X)=\gdet(X_{\op{red}}^2)$ is linear and contains at the
worst $x_{21}^{-1}$). It follows e.g., that
$$x_{21}\gdet(X_{\op{red}}^1)=x_{21}\gdet(X_{\op{red}}^2)$$ is
polynomial in the entries of $X$, so that $\gdet(X_{\op{red}}^1)$
cannot contain $x_{11}^{-1}$. Therefore,
$\gdet(X)=\gdet(X_{\op{red}}^1)$ is linear in the rows and columns
of $X\in\sss{M}^0(\mathbf{r}_0+\mathbf{e}_1;A)$.\medskip

(ii2) The case
$X\in\sss{M}^0(\mathbf{r}_0+\mathbf{e}_{\ell};A)$, where $\ell\neq 1$,
can be studied in a quite similar way. Indeed, consider first a
matrix of the form

$$ X=\lp \begin{array}{ccc|cccc}
            x_{11}     & \ldots & x_{1,m-1}    & 0       & x_{1,m+1} &\ldots & x_{1,N+1} \\
            \vdots      & &   \vdots       & \vdots & \vdots       &      & \vdots \\
            x_{m-1,1}  & \ldots & x_{m-1,m-1}& 0       & x_{m-1,m+1} &\ldots &
            x_{m-1,N+1}\\[4pt]\hline
            x_{m1}     & \ldots & x_{m,m-1}    & x_{mm} & x_{m,m+1} &\ldots & x_{m,N+1}\\
            x_{m+1,1}  & \ldots & x_{m+1,m-1}& 0       & x_{m+1,m+1} &\ldots & x_{m+1,N+1}\\
             \vdots     &        &  \vdots        &  \vdots & \vdots      &       & \vdots       \\
            x_{N+1,1} & \ldots & x_{N+1,m-1} & 0    & x_{N+1,m+1} & \ldots & x_{N+1,N+1}
         \end{array}
         \rp\in\sss{M}^0(\mathbf{r}_0+\mathbf{e}_{\ell};A)\,,$$

\vspace{5mm}\noindent where the suggested redecomposition
corresponds to the redecomposition
$$(r_1,\ldots,r_{\ell-1}\;|\;r_{\ell}+1,\ldots,r_{q})\,.$$ Remember
that the determinant of $X$ is defined as
$$\gdet(X)=\prod_{i=0}^{q-1}\det\left|X^{1\ldots i,1\ldots
i}\right|_{i+1\;i+1}\,.$$ It follows from (an obvious extension of)
Lemma \ref{x11Lem} that
$$\prod_{i=\ell}^{q-1}\det\left|X^{1\ldots i,1\ldots
i}\right|_{i+1\;i+1}=x_{mm}\prod_{i=\ell}^{q-1}\det\left|(X^{m:m})^{1\ldots
i,1\ldots i}\right|_{i+1\;i+1}\,.$$\noindent Let now $i<\ell$ and
let
$$ \left|X^{1\ldots i,1\ldots i}\right|_{i+1\;i+1} = X_{i+1\;i+1}-UW^{-1}V$$
be the corresponding quasideterminant. Since the $m$-th column of
$U$ vanishes (we maintain the numeration of $X$), $$ UW^{-1} =
U^{0:m}(W^{-1})^{m:0}\;,$$ where $0$ means that no row or column
has been deleted. Using Equation (\ref{FormalInverse}), we
similarly find that
$$ UW^{-1}V = U^{0:{m}}(W^{-1})^{m:m}V^{{m}:0}\;=U^{0:{m}}(W^{m:m})^{-1}V^{{m}:0}\,.$$
Hence, $$\left|X^{1\ldots i,1\ldots
i}\right|_{i+1\;i+1}=\left|(X^{m:m})^{1\ldots i,1\ldots
i}\right|_{i+1\;i+1}$$ and $$ \gdet(X)=x_{mm}
\gdet(X^{{m}:{m}})\;.$$ Linearity now follows from the
$|\mathbf{r}|$-induction hypothesis. To pass from an elementary
$m$-th column, containing a unique nonzero element, to an
arbitrary one, it suffices to ``fill'' the elementary column
downwards and upwards using the arguments detailed in (b) and (c)
of (ii1). This then completes the proof of the polynomial
structure of the $\zG$-determinant, as well as that of Theorem
\ref{thmgdet}.

\subsection{Example}
The graded determinant of a matrix

$$X=\lp \begin{array}{c|c|c|c} x&a&b&c \\[4pt]
\hline d&y&e&f \\ [4pt]
\hline g&h&z&l \\ [4pt]
\hline m&n&p&w  \end{array} \rp\in\sss{M}^0\left((1,1,1,1);A\right)$$

\vspace{3mm}\noindent over a $(\Z_2)^3$-graded commutative algebra
$A$, is given by

$$\gdet(X)= \left| X \right|_{11} \left| X^{1,1} \right|_{22}
\left| X^{12,12} \right|_{33} \left| X^{123,123} \right|_{44}\,.$$

Of course, $$ \left| X^{123,123} \right|_{44} = w\quad \text{ and
}\quad \left| X^{12,12} \right|_{33} = z-l\za p\,,$$

\vspace{3mm}\noindent where $\za=w^{-1}.$ Using Inversion Formula
(\ref{InvForm}) and the graded commutativity of the multiplication
in $A$, we easily find

$$\left| X^{1,1} \right|_{22} = \za \zb \lp y(zx-lp) -ehw+fph+eln - fnz
\rp\,,$$

\vspace{3mm}\noindent with $\zb=(z-l\za p)^{-1}$, so that the
product of the three last factors of $\gdet(X)$ is equal to
$$v:= y(zx-lp) -ehw+fph+eln - fnz\,.$$

As concerns the quasideterminant $|X|_{11}$, the inverse of the
involved $(3\times 3)$-matrix can be computed for instance by means
of the UDL-decomposition of this matrix. After simplifications
based on graded commutativity, we obtain

$$ \lp\begin{array}{ccc}
y & e&f \\[4pt]
 h&z&l \\[4pt]
  n&p&w
  \end{array}\rp^{-1} =
   \lp\begin{array}{ccc} v^{-1}(zw-lp) & v^{-1}(fp-ew) & v^{-1}(el-fz) \\[4pt]
v^{-1}(ln-hw)&v^{-1}(yw-fn)&v^{-1}(hf-ly) \\[4pt]
v^{-1}(ph-zn)&v^{-1}(ne-py)&v^{-1}(yz-eh)
\end{array} \rp\,. $$
Finally,
\beas \left| X \right|_{11} &=& v^{-1}\left[ x v - \lp a(zw-lp)+b (ln-hw)+c(ph-zn)  \rp d \right.\\
                             &&       - \lp a(fp-ew)+b (yw-fn)+c (ne-py) \rp g \\
                            & &       \left. - \lp a(el-fz)+b (hf-ly)+c(yz-eh)  \rp m \right]\; \eeas
and
\begin{equation}
 \label{poly2}
\begin{array}{rcrcrcrcrcrcl}
   \gdet(X)&=& xyzw&-&xylp&-&xehw&-&xfhp&+&xeln&-&xfzn\\[4pt]
               &&         -adzw&+&adlp&+&aegw&+&afgp&-&aelm&+&afzm \\[4pt]
               &&         -bdhw&+&bdln&-&bygw&+&bfgn&+&bylm&+&bfhm \\[4pt]
               &&       -cdhp&-&cdzn&-&cygp&+&cegn&-&cyzm&+&cehm\,.
\end{array}
\end{equation}

Further examples are given in Section \ref{ExSect}.

\begin{rem}
(a) As claimed by Theorem \ref{thmgdet}, Part (ii),
the determinant $\gdet(X)$ is linear in the rows and columns of $X$. Result (\ref{poly2}) is thus analogous
to the Leibniz formula for the classical determinant. Of course,
signs are quintessentially different. It is worth noticing that,
if we use the LDU-decomposition of $X$, which can be obtained in
the exact same manner as the UDL-decomposition, see Proposition
\ref{UDL}, we find that \be \label{gdetExpl2} \gdet(X)=
\prod_{k=1}^{q}\det | X^{k+1\ldots q\, , \,k+1\ldots q}|_{kk}\,,
\ee where $q=2^{n-1}.$ For the preceding example, we thus get

$$\gdet(X)=|X^{234,234}|_{11}\,|X^{34,34}|_{22}\,|X^{4,4}|_{33}\,|X|_{44}\,$$

\vspace{3mm}\noindent and computations along the same lines as
above actually lead exactly to Expression (\ref{poly2}).\medskip

(b)
The reader might wish to check by direct inspection that the polynomials
$\gdet(XY)$ and $\gdet(X)\cdot\gdet(Y)$ coincide.
However, even the simplest example in the $(\Z_2)^3$-graded case
involves over a hundred of terms. Such (computer-based) tests preceded
the elaboration of our above proofs. These computations can of
course not be reproduced here.\end{rem}





\section{$(\Z_2)^n$-Graded Berezinian of Invertible Graded Matrices of Degree $0$}\label{Ber}

Let $A$ be a $(\Z_2)^n$-graded commutative algebra. Its even part
$A_0$ is clearly $(\Z_2)^n_{\;0}$-graded commutative. In the
preceding section, we investigated the determinant $\gdet(X)$ of
degree zero $(\Z_2)^n_{\;0}$-graded matrices
$X\in\sss{M}^0(\mathbf{r}_0;A_0)$, with $\mathbf{r}_0 \in \N ^q$,
$q := 2^{n-1}$.
Below we now define the determinant $\gber(X)$ of invertible
degree zero $(\Z_2)^n$-graded matrices
$X\in\sss{GL}^{0}(\mathbf{r}; A)$, with $\mathbf{r} \in \N^p$,
$p:=2^n$. These matrices, which contain $p \times p$ blocks
$X_{ku}$ of dimension $r_k\times r_u$ with entries in
$A^{\zg_k+\zg_u}$, can also be viewed as $2 \times2$ block
matrices $\X$, whose decomposition is given by the even and odd
subsets of $(\Z_2)^n$ (let us emphasize that the present $2\times 2$ decomposition ${\X}$ of a graded matrix $X$ is different from the above used $2\times 2$ decomposition ${\frak X}$ (see Equation (\ref{FirstDecomposition})) of a purely even graded matrix $X$).

\subsection{Statement of the Fundamental Theorem}

Recall that, $(A^{0})^{\times}$ denotes the
group of units of the unital algebra $A^0.$
Clearly, $(A^0)^{\times}=\sss{M}^0((1,0, \ldots , 0) ;A).$
Our result is as follows.

\begin{thm}

\label{ExUniGber} There is a unique group homomorphism
$$ \gber: \sss{GL}^{0}(\mathbf{r}; A) \to (A^{0})^{\times} $$
such that:
\begin{enumerate}
        \item[($1$)] For every $2\times 2$ block-diagonal matrix $\mathcal{X}\in\sss{GL}^{0}(\mathbf{r}; A)$,
        $$ \gber(\mathcal{X})=
        \gdet(\mathcal{X}_{11})\cdot\gdetinv(\mathcal{X}_{22})\in(A^0)^{\times}\,.$$
    \item[($2$)] The image of any lower $($resp., upper$)$ $2\times 2$ block-unitriangular matrix in $\sss{GL}^{0}(\mathbf{r}; A)$ equals $1\in(A^0)^{\times}$.
\end{enumerate}
\end{thm}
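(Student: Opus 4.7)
The plan is to mirror the classical super-Berezinian construction, exploiting the even/odd decomposition of $(\Z_2)^n$. Regarded as a $2\times 2$ block matrix in this splitting, any $\mathcal{X}\in\sss{GL}^0(\mathbf{r};A)$ has diagonal blocks $\mathcal{X}_{11},\mathcal{X}_{22}$ whose entries have degrees $\zg_k+\zg_u\in(\Z_2)^n_{\,0}$ (both indices even or both odd), hence lie in the even part $A_0$. Thus $\mathcal{X}_{11}$ and $\mathcal{X}_{22}$ are themselves purely even $(\Z_2)^n_{\,0}$-graded matrices of degree zero, to which the graded determinant $\gdet$ of Theorem~\ref{thmgdet} applies.

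Uniqueness is obtained as follows. Provided $\mathcal{X}_{22}$ is invertible, one has the block UDL factorization
$$\mathcal{X}=\begin{pmatrix}\I & \mathcal{X}_{12}\mathcal{X}_{22}^{-1}\\ 0 & \I\end{pmatrix}\begin{pmatrix}\mathcal{X}_{11}-\mathcal{X}_{12}\mathcal{X}_{22}^{-1}\mathcal{X}_{21} & 0\\ 0 & \mathcal{X}_{22}\end{pmatrix}\begin{pmatrix}\I & 0\\ \mathcal{X}_{22}^{-1}\mathcal{X}_{21} & \I\end{pmatrix},$$
so properties (1), (2) together with the homomorphism property immediately force
$$\gber(\mathcal{X})=\gdet\bigl(\mathcal{X}_{11}-\mathcal{X}_{12}\mathcal{X}_{22}^{-1}\mathcal{X}_{21}\bigr)\cdot\gdetinv(\mathcal{X}_{22}).$$
For existence, I would take this expression as the definition. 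Conditions (1) and (2) are then checked by direct substitution: for block-diagonal $\mathcal{X}$ the Schur complement reduces to $\mathcal{X}_{11}$, and for either block-unitriangular form it reduces to $\I$ with $\mathcal{X}_{22}=\I$, giving $\gber=1$.

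Two nontrivial issues remain. First, one must establish that $\mathcal{X}_{22}$ is invertible whenever $\mathcal{X}\in\sss{GL}^0(\mathbf{r};A)$. As in the super case, the argument is that the ideal generated by products of odd entries acts nilpotently in the relevant setting (automatic in the Clifford-algebra applications that motivate the paper); modding it out sends $\mathcal{X}$ to the block-diagonal $\mathrm{diag}(\mathcal{X}_{11},\mathcal{X}_{22})$ over the quotient, whose invertibility forces that of each diagonal block, and a Neumann-series lift returns invertibility over $A_0$. Invertibility of the Schur complement in $\sss{M}^0(\mathbf{r}_0;A_0)$ then follows from invertibility of the whole factorization, so that $\gber(\mathcal{X})\in(A^0)^{\times}$.

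The second and principal obstacle is multiplicativity of $\gber$. The plan is to factor both $\mathcal{X}$ and $\mathcal{Y}$ via the same $2\times 2$ block UDL decomposition, reducing $\gber(\mathcal{X}\mathcal{Y})=\gber(\mathcal{X})\gber(\mathcal{Y})$ to three elementary cases on $\mathcal{Y}$: lower block-unitriangular (where $\mathcal{X}\mathcal{Y}$ inherits the same $(2,2)$-pivot and the same Schur complement as $\mathcal{X}$, so the identity is immediate); block-diagonal (which reduces at once to multiplicativity of $\gdet$ on each diagonal block, already known from Theorem~\ref{thmgdet}); and upper block-unitriangular (the delicate case, in which the Schur complement undergoes a controlled transformation that, after rewriting via a Weinstein--Aronszajn-type identity, is absorbed by multiplicativity of $\gdet$). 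Careful bookkeeping of the odd off-diagonal contributions is the heart of the proof; once this is in place, the symmetric LDU-based formula (pivoting on $\mathcal{X}_{11}$) must agree with the above by uniqueness, completing the argument.
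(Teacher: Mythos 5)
Your overall route is the same as the paper's: split $X$ into a $2\times 2$ block matrix along the even/odd decomposition of $(\Z_2)^n$, prove that invertibility of $X$ forces invertibility of $\X_{11}$ and $\X_{22}$ by passing to the quotient by the ideal of odd elements and lifting back by nilpotence (the paper's Lemma \ref{invlem}), and then let the $2\times 2$ UDL factorization together with (1), (2) and multiplicativity force the formula $\gber(X)=\gdet(|\X|_{11})\cdot\gdetinv(\X_{22})$, which is then taken as the definition. (One small inaccuracy: nilpotence here is not ``automatic in the Clifford-algebra applications'' --- it comes from graded commutativity itself, since every odd element of a $(\Z_2)^n$-graded commutative algebra over a field of characteristic $0$ squares to zero; in the Clifford case with the even grading there are no odd elements at all.)

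The genuine gap is exactly where you stop: multiplicativity when the right factor is upper block-unitriangular. You invoke ``a Weinstein--Aronszajn-type identity'' and ``careful bookkeeping of the odd off-diagonal contributions,'' but this identity is the nontrivial content and is neither stated precisely nor proved. Two points matter here. First, in this graded setting the correct statement carries a sign: for the relevant blocks one needs $\gdet(\I-\X_{12}\Y_{21})=\gdet(\I+\Y_{21}\X_{12})$, the flip coming from anticommutation of the odd entries; the naive $\gdet(\I+AB)=\gdet(\I+BA)$ is not what is available. Second, the paper can only prove this identity for \emph{elementary} off-diagonal blocks (Lemma \ref{gdetgtr}, modeled on Lemma \ref{AB BA}), so the argument must first reduce the general case: factor the upper unitriangular $\Y_U$ into matrices $\E_U$ with a single nonzero (odd) entry, reduce by the already-established cases to showing $\gber(\X_L\E_U)=1$ for lower unitriangular $\X_L$, and then use that the odd entry squares to zero, whence $(CE)^2=(EC)^2=0$, $(\I+CE)^{-1}=\I-CE$, and the Schur complement collapses to $\I-EC$, at which point the elementary identity finishes the computation. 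None of these steps (sign analysis, reduction to elementary matrices, nilpotency computation) appears in your proposal, and without them the ``delicate case'' remains an announcement rather than a proof; the closing appeal to an LDU-based formula agreeing by uniqueness does not substitute for it, since uniqueness presupposes that some multiplicative $\gber$ exists.
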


We call $\gber(X)$, where $X \in \sss{GL}^{0}(\mathbf{r}; A)$, the
\textit{graded Berezinian} of $X$.\medskip

To prove the theorem, we will need the following lemma.

\begin{lem} \label{invlem}
A homogeneous degree $0$ matrix  $X \in \sss{M}^{0}(\mathbf{r}; A)$ is
invertible, i.e., $X\in \sss{GL}^{0}(\mathbf{r}; A)$, if and only
if its diagonal blocks $\X _{11}$ and $\X _{22}$ are invertible.
\end{lem}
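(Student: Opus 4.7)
The plan is to prove both directions via the Schur-complement factorization of $\X$ as a $2\times 2$ block matrix associated with the even/odd splitting of $(\Z_2)^n$. In this decomposition the diagonal blocks $\X_{11},\X_{22}$ have entries of even degree while $\X_{12},\X_{21}$ have entries of odd degree, so every product of two off-diagonal entries is even and lies in the two-sided ideal $J\subset A^0$ generated by products of two odd-degree elements of $A$. This ideal $J$ will play the role of a Jacobson-type radical in what follows.

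For the direction $(\Leftarrow)$, assuming $\X_{11}$ and $\X_{22}$ invertible, the key step is to write the block UDL factorization
$$
\X=\begin{pmatrix}\I & 0\\ \X_{21}\X_{11}^{-1} & \I\end{pmatrix}\begin{pmatrix}\X_{11} & 0\\ 0 & S\end{pmatrix}\begin{pmatrix}\I & \X_{11}^{-1}\X_{12}\\ 0 & \I\end{pmatrix},\qquad S:=\X_{22}-\X_{21}\X_{11}^{-1}\X_{12}.
$$
The two outer block-unitriangular factors have explicit inverses (obtained by negating the off-diagonal block), so invertibility of $\X$ is equivalent to invertibility of the Schur complement $S$. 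Since the correction term $\X_{21}\X_{11}^{-1}\X_{12}$ has entries in $J$, the standard ``unit plus element of the Jacobson radical is still a unit'' principle, applied to $\X_{22}$, shows that $S$ is invertible. An entirely symmetric argument using the LDU factorization proves the same conclusion when one prefers to invert $\X_{22}$ first.

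For the direction $(\Rightarrow)$, write $Y:=X^{-1}\in\sss{M}^0(\mathbf{r};A)$ in the same $2\times 2$ block form. Reading off the $(1,1)$- and $(2,2)$-blocks of $XY=\I$ gives
$$
\X_{11}Y_{11}+\X_{12}Y_{21}=\I,\qquad \X_{22}Y_{22}+\X_{21}Y_{12}=\I,
$$
whose cross-terms again lie in $J$. Passing to the quotient $A^0/J$ forces $\overline{\X_{11}Y_{11}}=\I$ and $\overline{\X_{22}Y_{22}}=\I$, so that $\overline{\X_{11}}$ and $\overline{\X_{22}}$ are units in the matrix ring over $A^0/J$. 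Lifting these units back from the quotient, together with the symmetric identities extracted from $YX=\I$ to supply left inverses, produces two-sided inverses of $\X_{11}$ and $\X_{22}$ in $A^0$ itself.

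The main obstacle is the justification that products of two odd-degree elements of $A$ really form a Jacobson-type ideal of $A^0$, i.e.\ that the graded-commutativity rule \eqref{GCon} is strong enough to make the ``unit plus small element'' perturbation principle operate. This is the $(\Z_2)^n$-graded incarnation of the classical super-algebra fact that odd squares are nilpotent; once it is in place, the remainder of the argument reduces to block-matrix bookkeeping and careful sign tracking via the graded commutation rule.
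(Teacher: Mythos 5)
Your overall strategy is sound and, at bottom, the same as the paper's: both proofs hinge on the fact that the ideal built from odd elements is ``small'', so that invertibility can be tested after discarding the odd-entried off-diagonal blocks. The paper packages this as a single claim (a matrix over $A$ is invertible iff its image over $A/J$ is, where $J$ is the ideal generated by the odd elements of $A$; modulo $J$ the matrix $\X$ becomes block-diagonal), while you split the two directions, using the Schur-complement UDL factorization for sufficiency and the block identities from $XY=YX=\I$ for necessity. Your variant is fine, and in the necessity direction you do not even need to pass to the quotient and lift: $\X_{11}\Y_{11}=\I-\X_{12}\Y_{21}$ and $\Y_{11}\X_{11}=\I-\Y_{12}\X_{21}$ directly yield a right and a left inverse of $\X_{11}$ once one knows $\I$ minus a matrix over $J$ is invertible.

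That last point, however, is exactly the step you label ``the main obstacle'' and then leave unproved, and it is the real mathematical content of the lemma; the rest is, as you say, bookkeeping. It cannot be discharged by appealing to a generic ``Jacobson-type ideal'' principle: in a noncommutative ring, an ideal generated by nilpotent elements need not be nil (in a matrix ring the nilpotents generate everything), so graded commutativity must actually be invoked. The argument the paper gives is short: an odd homogeneous $o$ satisfies $o^2=-o^2$ by (\ref{GCon}) since $\la\tilde o,\tilde o\ra=1$, hence $o^2=0$ in characteristic zero; all entries of the relevant matrix $W$ over $J$ involve only finitely many odd homogeneous elements $o_1,\dots,o_S$, and in any product of more than $S$ entry-terms some $o_k$ occurs twice, so after decomposing the intervening factors into homogeneous pieces and sliding the two occurrences together (up to sign) the product vanishes; thus $W^{S+1}=0$ and $(\I+W)^{-1}=\I+\sum_{k=1}^{S}(-W)^k$. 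Note that nilpotency is needed at the level of the whole matrix $W$, not just of its individual entries, which is why the finiteness/pigeonhole argument is run over all entries simultaneously. With this supplied, both of your directions go through. A minor notational slip: your ideal is not contained in $A^0$ (a product of two odd elements has arbitrary even degree), only in the even part $A_0$, and correspondingly the inverses of $\X_{11}$ and $\X_{22}$ are matrices over $A_0$ rather than over $A^0$.
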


\begin{proof}
Let $J$ be the ideal of $A$ generated by the odd elements, i.e., the elements of the subspace
$$
A_1:=\bigoplus_{\zg\in(\Z_2)^n_1}A^{\zg}\,.
$$
Note that any element in $J$ reads as finite sum
$\sum_{k=1}^s a_ko_k$, where $a_k\in A$ and $o_k\in A_1$ are
homogeneous. Therefore, we have $j^{s+1}=0$, so that $1\notin J$
and $J$ is proper. We denote by $\bar{A}$ the associative unital
quotient algebra $\EnsQuot{A}{J}\neq \{\bar 0\}$ and by $\,\bar{} : A \to
\bar{A}$ the canonical surjective algebra morphism. This map
induces a map on matrices over $A$, which sends a matrix $Y$ with
entries $y_{ij}\in A$ to the matrix $\bar{Y}$ with entries
$\bar{y}_{ij}\in \bar{A}$.\medskip

It suffices to prove the claim: \emph{A matrix $Y$ over $A$ is invertible if and
only if $\bar{Y}$ over $\bar A$ is invertible}. Indeed, if $X$ is a degree
$0$ graded matrix over $A$, its $2\times 2$ blocks $\X_{12}$ and
$\X_{21}$ have exclusively odd entries. Thus, $X$ is invertible if
and only if $$ \bar{\X} = \lp \begin{array}{cc} \bar{\X}_{11} &
\bar{0}
\\[4pt] \bar{0} & \bar{\X}_{22} \end{array} \rp $$ is, i.e., if and
only if $\bar{\X}_{11}$ and $\bar{\X}_{22}$ are invertible, or
better still, if and only if $\X_{11}$ and $\X_{22}$ are
invertible.\medskip

As for the mentioned claim, if $Y$ is invertible, then, clearly,
$\bar{Y}$ is invertible as well. Conversely, assume $\bar{Y}$
invertible and focus for instance on the right inverses (arguments are the same
for the left ones). There then exists a matrix $Z$ over $A$ such
that $YZ=\I+W$, for some matrix $W$ over $J$. Hence, matrix $Y$ has a right inverse, if $\I+W$ is invertible, which happens if $W$ is
nilpotent.
Note that $W^{S+1}=0$, then
$$
(\I+W)^{-1}=\I+\sum_{k=1}^S(-W)^k\,.
$$
To see that matrix $W$ over
$J$ is actually nilpotent, remark that there is a finite number of
homogeneous odd elements $o_1, \ldots ,o_S$ such that each entry
of $W$ reads $\sum_{k=1}^S a_k o_k$, with homogeneous $a_k \in A$.
Hence, $W^{S+1}=0.$\end{proof}

\subsection{Explicit Expression}

As for the graded determinant, we will prove uniqueness and
existence of the graded Berezinian by giving a necessary explicit
formula and then proving that a homomorphism defined by means of
this formula fulfills all the conditions of Theorem
\ref{ExUniGber}.

\begin{prop}
\label{BerDetPr}
Let $A$ be a $(\Z_2)^n$-graded commutative algebra,
and let $\mathbf{r}\in\N^{2^n}$.
The $(\Z_2)^n$-graded Berezinian of a matrix $X\in\sss{GL}^{0}(\mathbf{r}; A)$ is given by
\be\gber(X)=
\gdet(|\X|_{11})\cdot\gdetinv(\X_{22})
\,,\label{DefGBer2}\ee
where $\X_{k\ell}$ refers to the $2\times 2$ redivision of $X$ (see beginning of Section \ref{Ber}).
\end{prop}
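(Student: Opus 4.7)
The plan is first to check that the right-hand side of formula (\ref{DefGBer2}) makes sense, then to establish uniqueness by reducing an arbitrary $X \in \sss{GL}^0(\mathbf{r};A)$ to its $2\times 2$ block UDL decomposition, and finally to establish existence by verifying that the explicit formula defines a group homomorphism with properties (1) and (2).

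First I would observe that $\X_{11}$ and $\X_{22}$ have entries in $A^0$ (since the $(\Z_2)^n$-grading of $A$, restricted to $A_0$, is $(\Z_2)^n_{\;0}$-graded), whereas $\X_{12}$ and $\X_{21}$ have entries in $A_1$. Hence the Schur complement
$$|\X|_{11} = \X_{11} - \X_{12}\X_{22}^{-1}\X_{21}$$
is a purely even $(\Z_2)^n_{\;0}$-graded matrix of degree $0$: each entry of the second summand is of the form $(\text{odd})\cdot(\text{even})\cdot(\text{odd})$, which lies in $A^0$. By Lemma \ref{invlem}, $\X_{22}$ is invertible, so that $|\X|_{11}$ is well-defined; hence $\gdet(|\X|_{11})$ makes sense via Theorem \ref{thmgdet}, and so does the right-hand side of (\ref{DefGBer2}).

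Next I would prove uniqueness. For any $X\in\sss{GL}^0(\mathbf{r};A)$, the $2\times 2$ block UDL decomposition (Proposition \ref{UDL}, case $p=2$) gives
$$X \;=\; \left(\begin{array}{cc}\I & \X_{12}\X_{22}^{-1}\\ 0 & \I\end{array}\right)\left(\begin{array}{cc}|\X|_{11} & 0 \\ 0 & \X_{22}\end{array}\right)\left(\begin{array}{cc}\I & 0 \\ \X_{22}^{-1}\X_{21} & \I\end{array}\right).$$
Each outer factor is a $2\times 2$ block-unitriangular matrix of degree $0$, so by property (2) any group homomorphism $\gber$ satisfying (1), (2) must send them to $1$. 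Applying multiplicativity and property (1) to the middle factor forces
$$\gber(X) \;=\; \gdet(|\X|_{11})\cdot\gdetinv(\X_{22}),$$
which establishes uniqueness.

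Finally, I would show that the formula (\ref{DefGBer2}) actually defines a group homomorphism satisfying (1) and (2). Properties (1) and (2) are immediate: for a block-diagonal $\X$, $|\X|_{11}=\X_{11}$; for a block-unitriangular $\X$, both $|\X|_{11}$ and $\X_{22}$ equal $\I$ (or in the lower case $|\X|_{11}=\X_{11}=\I$ and $\X_{22}=\I$), and Theorem \ref{thmgdet}~(3) gives $\gdet = 1$. The main obstacle is multiplicativity $\gber(XY)=\gber(X)\gber(Y)$. I would prove it by mimicking the three-step scheme used in the proof of Theorem \ref{thmgdet}: decompose $X = X_U X_D X_L$ and $Y = Y_U Y_D Y_L$ as $2\times 2$ block UDL factorizations, and verify the identity case by case when the second factor is (i) lower block-unitriangular, (ii) block-diagonal, and (iii) upper block-unitriangular. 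Cases (i) and (iii) reduce to the observation that $L\cdot L'$ is lower block-unitriangular and that right-multiplication by such a matrix does not change either $|\X|_{11}$ or $\X_{22}$ (up to a Schur-complement rewriting that keeps the formula invariant); case (ii) reduces, after cancelling the unitriangular factors on the outside, to multiplicativity of $\gdet$ applied separately to the $(1,1)$ and $(2,2)$ blocks, which is Theorem \ref{thmgdet}~(1). The hard point, as in the determinant case, is handling case (iii) where one must track how the Schur complement transforms under multiplication by an elementary upper factor; an identity of the type $\gdet(\I+\mathfrak{E}\mathfrak{Y}) = \gdet(\I+\mathfrak{Y}\mathfrak{E})$ of Lemma \ref{AB BA} will be the essential computational ingredient allowing the rearrangement.
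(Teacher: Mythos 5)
Your plan is correct and follows essentially the same route as the paper: uniqueness via the $2\times 2$ block UDL decomposition with properties (1) and (2), and multiplicativity by reduction (through the UDL factors) to the crucial case of a lower block-unitriangular $X$ against an elementary upper factor, settled by an $\gdet(\I+AB)$-versus-$\gdet(\I+BA)$ identity. One caveat: because the off-diagonal blocks have \emph{odd} entries, the identity actually needed is the signed variant $\gdet\lp\I-\X_{12}\Y_{21}\rp=\gdet\lp\I+\Y_{21}\X_{12}\rp$ of Lemma \ref{gdetgtr}, not Lemma \ref{AB BA} itself (the version with $+$ on both sides fails for odd entries), and one also uses that the elementary odd entry squares to zero, so $(CE)^2=(EC)^2=0$ and $(\I+CE)^{-1}=\I-CE$.
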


Of course, for $n=1$, we recover the classical Berezinian $\ber$.\medskip

We will use the following lemma.

\begin{lem}\label{gdetgtr}
If $X ,Y \in \sss{M}^0(\mathbf{r}; A)$ and $\X _{12}$ or $\Y
_{21}$ is elementary, then
$$\gdet\lp  \I -\X _{12}\Y _{21}\rp= \gdet\lp \I + \Y_{21} \X _{12} \rp\,.$$
\end{lem}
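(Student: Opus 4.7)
The plan is to reduce this to the computation already carried out for Lemma~\ref{AB BA}, taking into account the single additional sign produced by commuting two odd elements.

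First I would observe that since $X, Y \in \sss{M}^0(\mathbf{r}; A)$ are of degree $0$, the off-diagonal blocks $\X_{12}$ and $\Y_{21}$ of the parity-based $2\times 2$ redivision have entries exclusively in the odd part $A_1$. Consequently the products $\X_{12}\Y_{21}$ and $\Y_{21}\X_{12}$ have entries in $A_0$, inherit the block structure of $\X_{11}$ and of $\X_{22}$ respectively, and are thus purely even $(\Z_2)^n_{\,0}$-graded matrices of degree $0$. Both occurrences of $\gdet$ in the statement are therefore legitimate instances of the determinant constructed in Theorem~\ref{thmgdet}.

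Next, I would assume that $\X_{12}$ is elementary (the case of $\Y_{21}$ elementary is entirely symmetric), with unique non-zero entry $x \in A_1$ at position $(r,s)$, and write $Y_{sr} := (\Y_{21})_{sr} \in A_1$. A direct computation shows that $\X_{12}\Y_{21}$ has only its $r$-th row non-zero, with entries $x\,(\Y_{21})_{sj}$. The argument used for Lemma~\ref{AB BA}, namely clearing the off-diagonal entries of row $r$ by elementary row operations, each realized as left multiplication by a block-unitriangular matrix of degree $0$ and hence of $\gdet = 1$, would then yield $\gdet(\I - \X_{12}\Y_{21}) = 1 - x\,Y_{sr}$. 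Applying the same procedure to $\I + \Y_{21}\X_{12}$ gives $\gdet(\I + \Y_{21}\X_{12}) = 1 + Y_{sr}\,x$.

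To conclude I would invoke the graded commutativity relation~(\ref{GCon}): since both $x$ and $Y_{sr}$ lie in $A_1$, one has $x\,Y_{sr} = -\,Y_{sr}\,x$, whence $1 - x\,Y_{sr} = 1 + Y_{sr}\,x$, as required. The one mildly subtle point, and the main bookkeeping obstacle, is checking that the coefficients appearing in the row-reduction are homogeneous of the correct degrees so that the elementary matrices used genuinely sit in $\sss{M}^0$; this follows by tracking the degrees of $x$ and of each $(\Y_{21})_{sj}$, which are forced by the requirement that $X$ and $Y$ be of degree $0$. It is precisely this homogeneity that makes the sign-change mechanism distinguishing the present lemma from Lemma~\ref{AB BA} unambiguous.
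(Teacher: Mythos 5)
Your route is essentially the paper's: reduce to the case where $\X_{12}$ is elementary with unique nonzero entry $x$ at position $(r,s)$, evaluate the two graded determinants explicitly as $1-x\,Y_{sr}$ and $1+Y_{sr}\,x$, and compare -- the paper's own proof simply declares the computation ``analogous to Lemma~\ref{AB BA}''. Your way of getting the two evaluations (clearing the single nontrivial row by left multiplication with matrices $G_{ij}(\lambda)$ of graded determinant $1$) is legitimate at this point of the text, since Theorem~\ref{thmgdet} is already fully established, so multiplicativity of $\gdet$ may be invoked freely; note only that for $\I+\Y_{21}\X_{12}$ the nonzero off-diagonal entries sit in a single \emph{column}, not a row, so you should clear them by right multiplication (column operations) or simply evaluate via formula~(\ref{gdetExpl}), as the paper does in Lemma~\ref{AB BA}. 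Your degree bookkeeping for the multipliers $x(\Y_{21})_{sj}$, of degree $w_r+w_j$, is correct, so the $G$'s are indeed of degree $0$.

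The one step whose justification does not stand as written is the final sign: ``since both $x$ and $Y_{sr}$ lie in $A_1$, one has $x\,Y_{sr}=-Y_{sr}\,x$''. In a $(\Z_2)^n$-graded commutative algebra with $n\geq 2$, two odd elements need \emph{not} anticommute: relation~(\ref{GCon}) gives $b\cdot a=(-1)^{\la\tilde a,\tilde b\ra}a\cdot b$, and the scalar product of two distinct odd degrees can vanish -- for instance $\la(0,0,1),(0,1,0)\ra=0$ in $(\Z_2)^3$, so such elements commute. What saves the argument -- and what the proof of Lemma~\ref{AB BA} makes explicit in its even counterpart (``have the same even degree, they commute'') -- is that $x$, at position $(r,s)$ of $\X_{12}$, and $Y_{sr}$, at the transposed position $(s,r)$ of $\Y_{21}$, carry the \emph{same} degree $w_r+w_s$, which is odd because $X,Y$ are of degree $0$ and these blocks are off-diagonal for the parity redivision; hence $\la\tilde x,\tilde Y_{sr}\ra=\la w_r+w_s,\,w_r+w_s\ra=1$ and they do anticommute. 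With this substitution for your last step, your proof is complete and coincides with the paper's.
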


\begin{proof}
Note first that both sides of the result actually make sense,
since $ \X _{12}\Y _{21} $ and $\Y _{21} \X _{12}$ are
$(\Z_2)^n_{\;0}$-graded matrices of degree $0$. The proof is analogous to that of
Lemma \ref{AB BA}; the sign change is due to oddness of the
entries of the off-diagonal blocks of $\X$ and $\Y$.
\end{proof}

\begin{proof}[Proof of Proposition \ref{BerDetPr} and Theorem \ref{ExUniGber}]
In view of Lemma \ref{invlem}, any matrix
$X\in\sss{GL}^{0}(\mathbf{r}; A)$ admits a $2\times 2$ UDL
decomposition. The conditions of Theorem \ref{ExUniGber} then
obviously imply that if $\gber$ exists it is necessarily given by
Equation (\ref{DefGBer2}).

To prove existence of the graded Berezinian map, it suffices to
check that the map $\gber$ defined by Equation (\ref{DefGBer2}) is
multiplicative and satisfies the requirements ($1$) and ($2$). Properties ($1$) and ($2$) are obvious. As for multiplicativity,
let $X , Y \in \sss{GL}^{0}(\mathbf{r}; A)$ and consider their
$2\times 2$ UDL decomposition
$$
\X =\X_U\X_D\X_L=\lp \begin{array}{cc} \I & \X_{12}\X_{22}^{-1}\\ 0 & \I \end{array} \rp \lp \begin{array}{cc} \left| \X\right| _{11} &0 \\ 0&\X_{22} \end{array} \rp \lp \begin{array}{cc} \I &0 \\ \X_{22}^{-1}\X_{21} & \I \end{array} \rp
$$
and
$$
\Y =\Y_U\Y_D\Y_L=\lp \begin{array}{cc} \I & \Y_{12}\Y_{22}^{-1}\\ 0 & \I \end{array} \rp \lp \begin{array}{cc} \left| \Y\right| _{11} &0 \\ 0&\Y_{22} \end{array} \rp \lp \begin{array}{cc} \I &0 \\ \Y_{22}^{-1}\Y_{21} & \I \end{array}
 \rp\,.
$$
The product $XY$ then reads
\be\label{MultiplicProd} \X\Y= \X_U
\lp
\begin{array}{cc} \left| \X\right|
_{11} \left| \Y\right| _{11} &\left| \X\right| _{11} \Y_{12} \\[6pt]
\X_{21}\left| \Y\right| _{11}& \X_{21}\Y_{12}+\X_{22}\Y_{22}
\end{array}\rp \Y_L\,. \ee
Comparing now the $2\times 2$ UDL decomposition of
$XY\in\sss{GL}^{0}(\mathbf{r}; A)$ given by the formula used above
for $\X$ and $\Y$, to that obtained via the $2\times 2$ UDL
decomposition of the central factor of the {\small RHS} of Equation
(\ref{MultiplicProd}) (which exists e.g., as
$\X_{21}\Y_{12}+\X_{22}\Y_{22}=(\mathcal{XY})_{22}$ is
invertible), we find in particular that
$$\left| \mathcal{XY} \right|_{11}= \left| \mathcal{X}\right| _{11}\left| \mathcal{Y}\right| _{11}-
\left| \X\right|
_{11}\Y_{12}\lp\X_{21}\Y_{12}+\X_{22}\Y_{22}\rp^{-1}\X_{21}\left|
\Y\right| _{11}\,.
$$
Consequently, \be \label{gberex}\gber({XY})=\ee $$\gdet\lp \left|
\X\right| _{11}\left| \Y\right| _{11}-  \left| \X\right|
_{11}\Y_{12}\lp\X_{21}\Y_{12}+\X_{22}\Y_{22}\rp^{-1}\X_{21}\left|
\Y\right| _{11} \rp\cdot \gdetinv\lp
\X_{21}\Y_{12}+\X_{22}\Y_{22}\rp\,.$$

\noindent Clearly, if $X$ is $2\times 2$ upper unitriangular or
diagonal, or $Y$ is lower unitriangular or diagonal (i.e., $X=\X_U$
or $X=\X_D$, $Y=\Y_L$ or $Y=\Y_D$), Equation (\ref{gberex})
reduces to
\be\label{MultiplicBer}\gber(XY)=\gber(X)\cdot\gber(Y)\,,\ee in
view of the multiplicativity of $\gdet$.\medskip

Due to the just established right multiplicativity of $\gber$ for matrices of the type $\Y_L$ and $\Y_D$, it suffices to still prove
right multiplicativity for matrices of the type $\Y_U$. Since any matrix $\Y_U$ reads as a finite product of matrices of the form $$\E_U:=\lp\begin{array}{cc} \I & E \\ 0 & \I
\end{array}\rp\in\sss{GL}^{0}(\mathbf{r}; A)\;,$$ where $E$ is elementary, we only need prove this right multiplicativity for $\E_U$. However, since we know already that the graded Berezinian is left multiplicative for matrices of the form $\X_U$ and $\X_D$, we can even confine ourselves to showing that Equation (\ref{MultiplicBer}) holds true for $$X=\X_L=\lp
\begin{array}{cc} \I & 0 \\ C & \I \end{array}\rp$$ and $Y=\E_U$, i.e., to showing that $\gber(\X_L\E_U)=1$.\medskip

By definition,
$$ \gber({\X_L\E_U})= \gdet (\I - E\lp \I+CE  \rp^{-1}C )\cdot \gdetinv( \I+CE )\,. $$

\noindent It is easily checked that any entry of $CE$ and $EC$
vanishes or is a multiple of the unique nonzero element of $E$.
Since this element is odd, it squares to zero and
$(CE)^{2}=(EC)^{2}=0$. This implies in particular that $\lp\I +CE
\rp^{-1}=\I-CE$, so that $$ \gber({\X_L\E_U})= \gdet\lp  \I - EC
\rp \cdot\gdetinv\lp  \I+CE \rp\,. $$ When combining the latter
result with the consequence \beas \gdet(\I - EC) = \gdet(\I +
CE)\eeas of Lemma \ref{gdetgtr}, we eventually get $
\gber({\X_L\E_U})=1.$ This completes the proofs of Proposition
\ref{BerDetPr} and Theorem \ref{ExUniGber}.
\end{proof}

\section{The Liouville Formula}

In this section we explain the relation between the graded trace
and the graded Berezinian.

\subsection{Classical Liouville Formulas}

The well-known Liouville formula \be\label{LiouvilleClass}
\det(\exp(X))=\exp(\tr(X))\,, \ee $X\in\op{gl}(r,\C)$, expresses
the fact that the determinant is the group analog of the trace. A
similar statement
$$
\ber(\I+\ze X)=1+\ze\op{str}(X)\,,
$$
where $\ze$ is an even parameter such that $\ze^2=0$ and $X$ an
even matrix, holds true in Superalgebra, see \cite{Lei80}, \cite{DM99}, \cite{CCF11}. Moreover, if $A$ denotes the Grassmann
algebra generated over a commutative field $\K$, where $\K=\R$ or
$\K=\C$, by a finite number of anticommuting parameters
$\xi_1,\ldots,\xi_q$, i.e., if $A=\K[\xi_1,\ldots,\xi_q]$, we have
$$\frac{d}{dt}\ber(X)=\op{str}(M)\ber(X)\;,$$ if
$\frac{d}{dt}X=MX$, where $t$ is a real variable and $X=X(t)$
(resp., $M$) is an invertible even (resp., an even) matrix over
$A$, see \cite{Ber87}. The super counterpart
$$\ber(\exp(X)):=\exp(\op{str}(X))\,,$$ is valid if $X$ is (even) nilpotent and in contexts where the exponential series converge, see \cite{DM99}. For another
variant of the correspondence between the Berezinian and the
supertrace, using a formal parameter, see \cite{Man88}.\medskip

\subsection{Graded Liouville Formula}

In this section, we extend the preceding relationship to $\gber$
and $\gtr$. We introduce a formal nilpotent degree 0 parameter
$\ze$ and work over the set $A[\ze]$ of formal polynomials in
$\ze$ with coefficients in a $(\Z_2)^n$-graded commutative algebra
$A$ over reals. It is easily seen that $A[\ze]$ is itself a
$(\Z_2)^n$-graded commutative algebra, so that the graded Berezinian
and graded trace do exist over $A[\ze]$.

For any
$\mathbf{s}\in\N^p$ and any $M\in\sss{M}^0(\mathbf{s};A)$, we
have $\ze M\in\sss{M}^0(\mathbf{s};A[\ze])$ and we define
$$
\exp{(\ze M)}:=\sum_k\ze^k\frac{M^k}{k!}\in\sss{M}^0(\mathbf{s};A[\ze])\,.
$$
Moreover, if $t$ denotes a real variable, it is straightforward
that
\be\label{DerExp}\frac{d}{dt}\exp(t\,\ze M)
=(\ze M)\exp(t\,\ze
M)\;.\ee In particular, if $X\in\sss{GL}^{0}(\mathbf{r}; A)$, we get
$$\ze\gtr({X})=\gtr(\ze X)\in
A^0[\ze]=\sss{M}^0((1,0,\ldots,0);A[\ze])\,,$$  so that
$$\exp(\gtr(\ze X))\in (A^0[\ze])^{\times}\quad\text{and}\quad
\gber(\exp(\ze X))\in (A^0[\ze])^{\times}\,.$$

\begin{thm}\label{thmLiouville}
If $\ze$ denotes a formal nilpotent parameter of degree $0$ and
$X$ a graded matrix of degree 0, we have $$ \gber(\exp(\ze X))=
\exp \lp \gtr(\ze X) \rp\,.$$
\end{thm}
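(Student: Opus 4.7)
The plan is to reduce the identity to a first-order ODE in the commutative ring $A^0[\ze]$. Introduce a real parameter $t$ and set $F(t):=\gber(\exp(t\ze X))$. Since $\ze$ is nilpotent and $\ze X$ has degree $0$, the matrix $\exp(t\ze X)=\sum_k t^k(\ze X)^k/k!$ is a finite sum lying in $\sss{GL}^0(\mathbf{r};A[\ze])$ (with inverse $\exp(-t\ze X)$), so $F$ is a polynomial in $t$ with values in the commutative group $(A^0[\ze])^{\times}$. It will suffice to show that $F$ satisfies $F'(t)=F(t)\,F'(0)$ with $F(0)=1$ and $F'(0)=\gtr(\ze X)$, for then $F(t)=\exp(t\,\gtr(\ze X))$ and the result follows by setting $t=1$.

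The ODE is immediate from the multiplicativity of $\gber$ (Theorem~\ref{ExUniGber}): since $(t\ze X)$ and $(s\ze X)$ commute, $\exp((t+s)\ze X)=\exp(t\ze X)\exp(s\ze X)$, whence $F(t+s)=F(t)F(s)$; differentiating at $s=0$ inside the commutative ring $A^0[\ze]$ yields $F'(t)=F(t)\,F'(0)$.

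To compute $F'(0)$, expand $\exp(t\ze X)=\I+t\ze X+O(t^2)$ and let $\X_{ij}$ denote the blocks of the $2\times 2$ redecomposition of $X$ along the even/odd splitting of $(\Z_2)^n$. Then by Proposition~\ref{BerDetPr} combined with the expansion
\[
|\I+t\ze X|_{11}=(\I+t\ze\X_{11})-(t\ze\X_{12})(\I+t\ze\X_{22})^{-1}(t\ze\X_{21})=\I+t\ze\X_{11}+O(t^2),
\]
in which the off-diagonal correction is quadratic in $t$, one obtains modulo $O(t^2)$ the congruence $F(t)\equiv\gdet(\I+t\ze\X_{11})\cdot\gdetinv(\I+t\ze\X_{22})$. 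The remaining sub-lemma is an infinitesimal Liouville formula for the purely even graded determinant: $\gdet(\I+sY)=1+s\,\gtr(Y)+O(s^2)$ for any purely even $Y\in\sss{M}^0(\mathbf{r}_0;A_0)$. Using the product formula (\ref{gdetExpl}), the same off-diagonal-is-quadratic observation reduces each quasideterminant factor $|(\I+sY)^{1\ldots k,1\ldots k}|_{k+1\;k+1}$ to $\I+sY_{k+1\;k+1}+O(s^2)$, and classical Liouville on each commutative diagonal block gives $\det(\I+sY_{k+1\;k+1})=1+s\tr(Y_{k+1\;k+1})+O(s^2)$. Expanding the product and invoking (\ref{TheTrace}) (all signs $+1$, because $\la\zg_k,\zg_k\ra=0$ for purely even $\zg_k$) identifies the first-order term with $s\,\gtr(Y)$.

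Applying this expansion to $\X_{11}$ and, with a sign reversal coming from $\gdetinv$, to $\X_{22}$ yields $F'(0)=\ze\bigl(\gtr(\X_{11})-\gtr(\X_{22})\bigr)=\gtr(\ze X)$; the final equality is (\ref{TheTrace}) read on the full degree-$0$ matrix $X$, the signs $(-1)^{\la\zg_k,\zg_k\ra}$ being $+1$ on the even-degree blocks that compose $\X_{11}$ and $-1$ on the odd-degree blocks that compose $\X_{22}$. The main obstacle in this plan is purely bookkeeping: verifying rigorously that every off-diagonal contribution to $\gdet$ near $\I$ is $O(s^2)$. Once that is done, the argument collapses to classical Liouville on commutative diagonal blocks combined with the commutativity of $A^0[\ze]$.
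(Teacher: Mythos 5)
Your argument is correct, but it takes a genuinely different route from the paper. The paper proves two differential lemmas along arbitrary flows: Lemma~\ref{DiffEqGDet} (if $\frac{d}{dt}X=MX$ then $\frac{d}{dt}\gdet(X)=\gtr(M)\gdet(X)$, proved via the row-linearity of $\gdet$ from Theorem~\ref{thmgdet}(ii) and elementary row operations $G_{ij}(-m_{ij})$) and Lemma~\ref{DiffEqGBer} (the same statement for $\gber$, proved by the classical Berezin computation, which needs the Lie-algebra property $\gtr(\X_{12}\X_{21})=-\gtr(\X_{21}\X_{12})$ of the graded trace); the theorem then follows because $\gber(\exp(t\ze X))$ and $\exp(t\gtr(\ze X))$ solve the same linear ODE with the same initial value. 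You instead exploit the group-homomorphism property of $\gber$ (Theorem~\ref{ExUniGber}) to get the one-parameter functional equation $F(t+s)=F(t)F(s)$ for $F(t)=\gber(\exp(t\ze X))$, which reduces everything to the single infinitesimal computation of $F'(0)$; that computation is done by a first-order expansion around $\I$ using Proposition~\ref{BerDetPr}, the product formula (\ref{gdetExpl}), the observation that off-diagonal contributions enter only at order $t^2$, and the sign bookkeeping of (\ref{TheTrace}) -- all of which checks out, including the nilpotency arguments that make $F$ a genuine polynomial in $t$ and make the mod-$t^2$ reductions legitimate ring-homomorphism manipulations. What each approach buys: the paper's lemmas are stronger statements (valid for arbitrary differentiable families $X'=MX$, hence reusable), but require the finite-dimensionality/differentiability setup and the graded-trace commutator identity; your proof is more economical for this particular theorem, works purely formally in $A^0[\ze][t]$ without those hypotheses, and never needs the off-diagonal trace cancellation, at the price of leaning on the multiplicativity of $\gber$ and on the (routine, and honestly acknowledged) verification that all off-diagonal corrections near $\I$ are $O(t^2)$.
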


Some preliminary results are needed to prove the preceding
theorem. Let $X=X(t),M=M(t)\!\in \sss{M}^0(\mathbf{r}_0;A)$, where
$t$ runs through an open internal $I\subset \R$, let $A$ be
finite-dimensional, and assume that the dependence of $X=X(t)$ on
$t$ is differentiable.

\begin{lem} If $\frac{d}{dt}X =MX$,
then
$$
\frac{d}{dt}\gdet(X)=\gtr(M)\gdet(X)\,.
$$
\label{DiffEqGDet}
\end{lem}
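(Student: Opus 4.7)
The plan is to mimic the classical Jacobi/Liouville derivation: factor the derivative of $\gdet(X)$ using multiplicativity, then extract a first-order expansion of $\gdet$ near the identity. Concretely, I would write
$$\frac{d}{dt}\gdet(X(t))=\lim_{h\to 0}\frac{\gdet(X(t+h))-\gdet(X(t))}{h},$$
and use the Taylor expansion $X(t+h)=X(t)+hMX(t)+o(h)=(\I+hM+o(h))X(t)$. By Theorem \ref{thmgdet}(i)(1), whose multiplicativity is a polynomial identity on the finite-dimensional space $\sss{M}^0(\mathbf{r}_0;A)$ (hence valid without any invertibility hypothesis on $X$), this factors as
$$\gdet(X(t+h))=\gdet(\I+hM+o(h))\cdot\gdet(X(t)).$$
The whole problem then reduces to establishing the first-order identity
$$\gdet(\I+hM)=1+h\,\gtr(M)+O(h^{2}).$$

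For this expansion I would apply the explicit quasideterminant formula (\ref{gdetExpl}):
$$\gdet(\I+hM)=\prod_{k=0}^{q-1}\det\bigl|(\I+hM)^{1\ldots k,\,1\ldots k}\bigr|_{k+1,\,k+1}.$$
For each factor, the quasideterminant has the shape $(\I+hM)_{k+1,k+1}-UW^{-1}V$, where $U,V$ are off-diagonal blocks of order $h$ and $W=\I+O(h)$ is invertible for small $h$ by continuity. Hence $UW^{-1}V=O(h^{2})$, and the quasideterminant equals $\I+hM_{k+1,k+1}+O(h^{2})$. Classical calculus then gives $\det(\I+hM_{k+1,k+1})=1+h\,\tr(M_{k+1,k+1})+O(h^{2})$, and taking the product over $k$ yields
$$\gdet(\I+hM)=1+h\sum_{k=1}^{q}\tr(M_{kk})+O(h^{2}).$$

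The last piece is to identify this sum with $\gtr(M)$. From formula (\ref{TheTrace}) applied to a homogeneous matrix of degree $x=0$, one has $\gtr(M)=\sum_{k}(-1)^{\la\gamma_{k},\gamma_{k}\ra}\tr(M_{kk})$. Since $M$ is purely even, each index $\gamma_{k}$ lies in the even subgroup $(\Z_{2})^{n}_{\;0}$, for which $\la\gamma_{k},\gamma_{k}\ra=0$ by definition; every sign is therefore $+1$, and the displayed sum is precisely $\gtr(M)$. Feeding this back into the factorization gives $\frac{d}{dt}\gdet(X)=\gtr(M)\,\gdet(X)$, as desired.

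I do not anticipate a genuine obstacle: the only delicate point is checking that the quasideterminant expression (\ref{gdetExpl}) is actually applicable at $\I+hM$ for small $h$ (the relevant principal submatrices are close to the identity, hence invertible), and carefully tracking the error terms in $h$. All other ingredients — polynomial multiplicativity of $\gdet$, linearity of the ordinary trace, and vanishing of the self-pairings $\la\gamma_{k},\gamma_{k}\ra$ on the even subgroup — have already been established in the paper.
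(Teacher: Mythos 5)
Your argument is correct in substance but follows a genuinely different route from the paper's. You establish a Jacobi-type first-order expansion $\gdet(\I+hM)=1+h\,\gtr(M)+O(h^2)$ via the quasideterminant formula (\ref{gdetExpl}) (legitimate at $\I+hM$ for small $h$, since $A$ is finite-dimensional and the near-identity principal submatrices are then invertible) and combine it with multiplicativity of $\gdet$. The paper never expands near the identity: it uses the row-linearity of $\gdet$ from Theorem \ref{thmgdet}, Part (ii), to write $\frac{d}{dt}\gdet(X)$ as a sum of graded determinants of matrices with one row differentiated, and then eliminates the off-diagonal contributions of $M$ by left-multiplying with the unitriangular matrices $G_{ij}(-m_{ij})$ of graded determinant $1$, see (\ref{GDetRed}), reducing each summand to $m_{ii}\gdet(X)$. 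The paper's argument is pure row manipulation, needs no invertibility anywhere and no analysis beyond differentiability; yours is closer to the classical Liouville proof and is shorter combinatorially, but leans on topology and inversion in the finite-dimensional algebra $A$.

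One step needs repair: the factorization $X(t+h)=X(t)+hMX(t)+o(h)=(\I+hM+o(h))X(t)$ tacitly writes the Taylor remainder as an $o(h)$ matrix times $X(t)$, which requires $X(t)$ to be invertible -- a hypothesis Lemma \ref{DiffEqGDet} does not make (unlike Lemma \ref{DiffEqGBer}). The fix is immediate: since $\gdet$ is polynomial in the entries, it is locally Lipschitz, so $\gdet(X(t+h))=\gdet\bigl((\I+hM)X(t)\bigr)+o(h)=\gdet(\I+hM)\cdot\gdet(X(t))+o(h)$, and the rest of your computation goes through unchanged. With that adjustment, together with your correct observation that all signs in (\ref{TheTrace}) equal $+1$ in the purely even degree-$0$ case, the proof is complete.
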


\begin{proof}
Denote by $|\mathbf{r}|=r_1+\ldots +r_q$ the total matrix
dimension and set $X=(x_{ij})$, $M=(m_{ij})$. As $\gdet(X)$ is
linear in the rows and columns of $X$, we have $$\frac{d}{dt} \gdet(X)
= \sum_{i=1}^{|\mathbf{r}|} \gdet(X_{i\,'})\,,$$ where $X_{i\,'}$ is the matrix
$X$ with $i$-th row derived with respect to $t$.
For any fixed $i$
and any $j \neq i$, the matrix $G_{ij}(-m_{ij})\in
\sss{M}^0(\mathbf{r}_0;A)$, see Proof of Theorem \ref{thmgdet}, Part (ii),
has graded determinant 1. Hence, the graded
determinant of $G_{ij}(-m_{ij})\,X_{i\,'}$, $j\neq i$, coincides
with that of $X_{i\,'}$, although we subtract from the $i$-th row
of $X_{i\,'}$ its $j$-th row left-multiplied by $m_{ij}$. In view
of the assumption in Lemma \ref{DiffEqGDet}, $$\frac{d}{dt} x_{ia}=
\sum_j m_{ij}x_{ja}\,,$$ for all $a$. Consequently, the $i$-th row
of $\prod_{j\neq i}G_{ij}(-m_{ij})\,X_{i\,'}$ contains the
elements
$$\frac{d}{dt}x_{ia}-\sum_{j\neq i}m_{ij}x_{ja}=m_{ii}x_{ia}\,,\quad
a\in\{1,\ldots,N\}\;$$ and $$\prod_{j\neq
i}G_{ij}(-m_{ij})\,X_{i\,'}=\op{diag}(1,\ldots,1,m_{ii},1,\ldots,1)\,X\,,$$
with self-explaining notation. It follows that
$$\frac{d}{dt} \gdet(X)
= \sum_{i=1}^{|\mathbf{r}|} \gdet(\prod_{j\neq
i}G_{ij}(-m_{ij})\,X_{i\,'})=\sum_{i=1}^{|\mathbf{r}|}
m_{ii}\gdet(X)=\gtr(M)\gdet(X)\,.$$\end{proof}

Take now $X=X(t),M=M(t)\!\in \sss{GL}^{0}(\mathbf{r}; A)$, $t\in
I\subset \R$, where $X=X(t)$ depends again differentiably on $t$
and is invertible for any $t$.

\begin{lem}\label{DiffEqGBer}
If $\frac{d}{dt}X=MX$, then
$$\frac{d}{dt}\gber(X)=\gtr(M)\gber(X)\,.$$
\end{lem}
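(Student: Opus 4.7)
The plan is to use the explicit formula $\gber(X)=\gdet(|\X|_{11})\cdot\gdetinv(\X_{22})$ of Proposition~\ref{BerDetPr} and reduce the computation to two applications of Lemma~\ref{DiffEqGDet}. First I would pass to the $2\times 2$ block redivision of $X$ and $M$ and translate $\frac{d}{dt}X=MX$ into the four block equations; the $(2,2)$ equation immediately gives $\frac{d}{dt}\X_{22}=\mathcal{N}_{22}\X_{22}$ with
$$\mathcal{N}_{22}:=\mathcal{M}_{22}+\mathcal{M}_{21}\X_{12}\X_{22}^{-1}.$$
A direct but slightly tedious calculation, using $\frac{d}{dt}\X_{22}^{-1}=-\X_{22}^{-1}(\frac{d}{dt}\X_{22})\X_{22}^{-1}$ and grouping terms, would then show after the expected cancellations that
$$
\tfrac{d}{dt}|\X|_{11}=\mathcal{N}_{11}\,|\X|_{11},\qquad
\mathcal{N}_{11}:=\mathcal{M}_{11}-\X_{12}\X_{22}^{-1}\mathcal{M}_{21}.
$$

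Second, since $|\X|_{11}$ and $\X_{22}$ are both purely even $(\Z_2)^n$-graded matrices of degree $0$, Lemma~\ref{DiffEqGDet} applies to each. Combining these with the Leibniz rule and the identity $\frac{d}{dt}\gdetinv(\X_{22})=-\gtr(\mathcal{N}_{22})\gdetinv(\X_{22})$ (obtained by differentiating $\gdet(\X_{22})\cdot\gdetinv(\X_{22})=1$), one reaches
$$
\tfrac{d}{dt}\gber(X)=\bigl(\gtr(\mathcal{N}_{11})-\gtr(\mathcal{N}_{22})\bigr)\gber(X),
$$
so that it suffices to establish the trace identity $\gtr(\mathcal{N}_{11})-\gtr(\mathcal{N}_{22})=\gtr(M)$.

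Third, from formula~(\ref{TheTrace}) applied to a degree $0$ matrix one has $\gtr(M)=\tr(\mathcal{M}_{11})-\tr(\mathcal{M}_{22})$, the minus sign coming from $(-1)^{\la\zg_k,\zg_k\ra}=-1$ for odd $\zg_k$; whereas on the purely even pieces $\mathcal{N}_{11}$ and $\mathcal{N}_{22}$ the graded trace coincides with the ordinary trace (all signs are $+1$). Substituting the definitions of $\mathcal{N}_{11}$ and $\mathcal{N}_{22}$, the identity reduces to
$$
\gtr(\X_{12}\X_{22}^{-1}\mathcal{M}_{21})+\gtr(\mathcal{M}_{21}\X_{12}\X_{22}^{-1})=0.
$$
Padding the fragments $\X_{12}\X_{22}^{-1}$ and $\mathcal{M}_{21}$ with zero blocks produces two degree $0$ matrices $\tilde A,\tilde B\in\sss{M}^0(\mathbf{r};A)$ whose graded commutator has exactly these two products as its only nonzero diagonal blocks, so the identity is precisely $\gtr([\tilde A,\tilde B])=0$, which holds by the graded Lie algebra morphism property of $\gtr$ from Theorem~\ref{TThmGen}.

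The main obstacle I anticipate is the sign bookkeeping in the third step. One must carefully recognize that the asymmetry between $\gtr(M)$ (which assigns $-1$ to its odd diagonal block) and the ordinary traces entering $\gtr(\mathcal{N}_{11}),\gtr(\mathcal{N}_{22})$ is exactly compensated by the minus sign arising from graded commutativity when the two odd-entry factors $\X_{12}\X_{22}^{-1}$ and $\mathcal{M}_{21}$ are interchanged; it is this cancellation, rather than a doubling, that allows the extra terms to disappear and yields the classical-looking Liouville identity.
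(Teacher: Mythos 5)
Your proposal is correct and follows essentially the same route as the paper: differentiate the factors $|\X|_{11}$ and $\X_{22}$ of the explicit formula for $\gber$, apply Lemma~\ref{DiffEqGDet} to each purely even piece, and cancel the cross terms via the graded Lie algebra morphism property of $\gtr$ (the paper's identity $\gtr(\X_{12}\X_{21})=-\gtr(\X_{21}\X_{12})$, which is exactly your padded-commutator argument and is indeed the right mechanism, rather than entrywise graded commutativity). The only cosmetic difference is that you differentiate $\gdetinv(\X_{22})$ through $\gdet(\X_{22})\cdot\gdetinv(\X_{22})=1$, whereas the paper works directly with $Z=\X_{22}^{-1}$ and $\frac{d}{dt}Z=-ZQ$.
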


The proof is exactly the same as for the classical Berezinian
\cite{Ber87}. We reproduce it here to ensure independent
readability of this text.

\begin{proof} Set $Y:=|\X|_{11}$ and $Z:=\X_{22}^{-1}$. A short computation shows
that
$$\frac{d}{dt}Y=(\mathcal{M}_{11}-\X_{12}\X_{22}^{-1}\mathcal{M}_{21})\,Y=:P\,Y
\quad\text{ and }\quad
\frac{d}{dt}Z=-Z\,(\mathcal{M}_{21}\X_{12}\X_{22}^{-1}+\mathcal{M}_{22})=:-Z\,Q\,.$$
Since, from Lemma \ref{DiffEqGDet}, we now get
$$\frac{d}{dt}\gdet(Y)=\gtr(P)\,\gdet(Y)\quad\text{ and }\quad
\frac{d}{dt}\gdet(Z)=-\gtr(Q)\,\gdet(Z)\,,$$ we obtain
$$\frac{d}{dt}\gber(X)=\lp\frac{d}{dt}\gdet(Y)\rp\cdot\gdet(Z)+\gdet(Y)\cdot\lp\frac{d}{dt}\gdet(Z)\rp$$
$$=\Big(\gtr(P)-\gtr(Q)\Big)\,\gdet(Y)\cdot\gdet(Z)\,.$$ It follows
from the Lie algebra homomorphism property of $\gtr$ (see Theorem
\ref{TThmGen}) that, for every ${X}\in\sss{M}^{0}(\mathbf{r}; A)$,
one has
$$
\gtr({\mathcal{X}_{12}\mathcal{X}_{21}})=
-\gtr({\mathcal{X}_{21}\mathcal{X}_{12}})\,.\label{gtrLahomExt}
$$
Using the preceding equation and recalling that above
$\mathcal{M}_{22}$ is viewed as a purely even degree 0 matrix, we
see that
$$\gtr(P)-\gtr(Q)=\gtr(\mathcal{M}_{11})-\gtr(\mathcal{M}_{22})=\gtr(M)\,.$$
Hence Lemma \ref{DiffEqGBer}.
\end{proof}

\begin{proof}[Proof of Theorem \ref{thmLiouville}]
 It follows from Equation (\ref{DerExp}) that $$\frac{d}{dt}\exp(t\,\ze X)=(\ze X)\exp(t\,\ze X)\quad\text{ and }\quad
\frac{d}{dt}\exp(t\gtr(\ze X))=\gtr(\ze X)\exp(t\gtr(\ze X))\,.$$
However, $$\frac{d}{dt}\gber(\exp(t\,\ze X))=\gtr(\ze
X)\gber(\exp(t\,\ze X))\,,$$ due to Lemma \ref{DiffEqGBer}. It now
suffices to observe that both solutions $\exp(t\gtr(\ze X))$ and
$\gber(\exp(t$ $\ze X))$ of the equation $\frac{d}{dt} y=\gtr(\ze
X)\,y$ coincide at $0$.
\end{proof}

\section{$(\Z_2)^n$-Graded Determinant over Quaternions and Clifford Algebras}

It this section we obtain the results specific for the Clifford Algebras
and, in particular, for the algebra of quaternions.
We show that, in the quaternionic case, the graded determinant is related to
the classical Dieudonn\'e determinant.
We then examine whether the graded determinant can be extended to
(purely even) homogeneous matrices of degree $\zg\neq0$.
It turns out that this is possible under the condition that the global dimension
is equal to $0$ or $1$ modulo $4$.\medskip

\subsection{Relation to the Dieudonn\'e Determinant}

In this section the algebra $A$
is the classical algebra $\mathbb{H}$ of quaternions
equipped with the $(\Z_2)^3_{\;0}$-grading (see Example \ref{quatgrad}).
It turns out that the graded determinant of a purely even
homogeneous quaternionic matrix of degree $0$ coincides
(up to a sign) with the classical Dieudonn\'e determinant.

\begin{prop}
For any matrix $X\in\sss{M}^0(\mathbf{r}_0;\qH)$ of degree $0$,
the graded determinant is a real number and its absolute value coincides
with the Dieudonn\'e determinant:
$$
\left|\gdet(X)\right|=\Ddet(X)\,.
$$
\end{prop}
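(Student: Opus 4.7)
The plan is to use the UDL decomposition of $X$ (Proposition \ref{UDL}) together with the multiplicativity of both $\gdet$ and the Dieudonn\'e determinant to reduce the proof to two very concrete cases: block-unitriangular matrices (where both determinants equal $1$) and block-diagonal matrices with real entries (where the two determinants agree up to sign). Throughout, I would regard $X$ simply as an element of $GL_n(\qH)$ when invoking properties of $\Ddet$, forgetting the block/grading structure.

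First I would observe that it suffices to prove $|\gdet(X)|=\Ddet(X)$ on the Zariski-open and dense subset of $\sss{M}^0(\mathbf{r}_0;\qH)$ where all principal block submatrices $X^{1\ldots k,1\ldots k}$ are invertible. On this subset the UDL factorization $X=UDL$ of Proposition \ref{UDL} exists, with $U$ block-upper-unitriangular, $L$ block-lower-unitriangular, and $D$ block-diagonal whose diagonal blocks are the quasideterminants $|X^{1\ldots k-1,1\ldots k-1}|_{kk}$; by construction these quasideterminants have entries in $\qH^0=\R$. Both $|\gdet(\cdot)|$ and $\Ddet(\cdot)$ are continuous functions of the entries (the former is the absolute value of a polynomial by Theorem \ref{thmgdet}(ii); the latter by its standard formulae), so an identity on a dense open set extends to all of $\sss{M}^0(\mathbf{r}_0;\qH)$. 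By the multiplicativity of $\gdet$ (Theorem \ref{thmgdet}(i)(1)) and of the Dieudonn\'e determinant, it is then enough to treat the three factors $U$, $D$, $L$ separately.

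For the unitriangular factors, $\gdet(U)=\gdet(L)=1$ by Theorem \ref{thmgdet}(i)(3). On the other hand, forgetting the block structure, $U$ and $L$ are ordinary unitriangular matrices in $GL_n(\qH)$, hence finite products of elementary transvections $E_{ij}(q)$ with $i\neq j$ and $q\in\qH$; each such transvection has Dieudonn\'e determinant $1$, so $\Ddet(U)=\Ddet(L)=1$. The remaining factor $D$ is block-diagonal with real blocks $D_{kk}$. Writing $D$ as a product of matrices each equal to the identity except in one diagonal block, multiplicativity of $\Ddet$ yields $\Ddet(D)=\prod_k \Ddet(D_{kk})$. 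For a real matrix $M$, Gauss elimination over $\R$ reduces $M$ to $\mathrm{diag}(\det(M),1,\ldots,1)$ through elementary real transvections (which have Dieudonn\'e determinant $1$); since $\Ddet(\mathrm{diag}(r,1,\ldots,1))=|r|$, we get $\Ddet(M)=|\det(M)|$. Hence
\[
\Ddet(D)=\prod_k |\det(D_{kk})|=\Bigl|\prod_k \det(D_{kk})\Bigr|=|\gdet(D)|,
\]
where the last equality uses the explicit formula \eqref{gdetExpl}. Combining the three cases gives $\Ddet(X)=|\gdet(X)|$ on the dense open subset, and hence everywhere.

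The step I expect to require the most care is the clean handling of matrices whose principal block submatrices fail to be invertible: strictly, one wants either the continuity argument sketched above, or a row/column permutation argument (noting that permutations affect $\gdet$ only by a sign, which is invisible to $|\gdet|$, and affect $\Ddet$ only through its abelianized target, where signs are killed too). The other slightly delicate point is justifying that $\Ddet$ of an ordinary real matrix, viewed as a quaternionic one, coincides with $|\det|$; this is standard but should be stated explicitly since it is the only place where the arithmetic of $\qH$ as a division ring (rather than as a graded algebra) enters the proof.
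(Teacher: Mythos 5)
Your proof is correct, but it takes a genuinely different route from the paper's. The paper does not argue via factorization and multiplicativity of $\Ddet$; instead it rewrites $\gdet(X)=\prod_k\det|X^{1\ldots k,1\ldots k}|_{k+1\,k+1}$ as a single product of quasiminors $\pm\,|X|_{i_1j_1}|X^{i_1:j_1}|_{i_2j_2}\cdots x_{i_Nj_N}=\pm\,D_{IJ}(X)$, using the commutative identity $\det C=(-1)^{a+b}|C|_{ab}\det(C^{a:b})$ together with Corollary \ref{HPAugmented} and Proposition \ref{Arbitrary}, and then invokes the Gelfand--Retakh--Wilson identity $\Ddet(X)=\|D_{IJ}(X)\|$, noting that for degree-zero quaternionic matrices the predeterminants $D_{IJ}(X)$ are real-valued polynomials, whence $\Ddet(X)=|D_{IJ}(X)|=|\gdet(X)|$. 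Your argument instead uses only the axiomatic properties of $\gdet$ from Theorem \ref{thmgdet} (multiplicativity, block-diagonal and unitriangular normalizations) plus elementary facts about $\Ddet$ (multiplicativity, triviality on transvections, $\Ddet=|\det|$ on real matrices), at the price of a density/continuity argument (or the convention $\Ddet=0$ on singular matrices) to leave the open set where the block UDL factorization exists; the paper's route is purely algebraic and yields the stronger intermediate fact $\gdet(X)=\pm D_{IJ}(X)$, but it leans on the GRW description of $\Ddet$ as the norm of a predeterminant, which your proof avoids. Two points you should make explicit: the factors $U,D,L$ are themselves degree-zero graded matrices (this uses the homogeneity of quasideterminants, established inside the proof of Lemma \ref{Lemma1}), so that Theorem \ref{thmgdet} indeed applies to them; and if some diagonal block $D_{kk}$ fails to be invertible, both $\gdet(X)$ and $\Ddet(X)$ vanish, so your Gauss-elimination step, which tacitly assumes invertibility of the real blocks, causes no loss of generality.
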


\begin{proof}
We will first show that the graded determinant
can be written as a product of quasiminors
$$
\gdet(X)=|X|_{i_1j_1}|X^{i_1:j_1}|_{i_2j_2}\ldots
x_{i_Nj_N}\,,
$$
for appropriate permutations $I=(i_1,\ldots,i_N),
J=(j_1,\ldots,j_N)$ of $(1,\ldots, N)$,
$$|\mathbf{r}|=|\mathbf{r}_0|=r_1+\cdots+r_4\,.$$
and then compare this formula with the classical Dieudonn\'e determinant.\medskip

Following \cite{GRW03}, \cite{GGRW05}, define the {\it predeterminants}
of $X$ by
$$
D_{IJ}(X):=
|X|_{i_1j_1}|X^{i_1:j_1}|_{i_2j_2}\,\ldots\, x_{i_Ni_N}\;\in \qH
$$
where $I=(i_1,\ldots,i_N)$ and $J=(j_1,\ldots,j_N)$
are some  are permutations of $(1,\ldots,N)$.
It is shown in the above references, that the $D_{IJ}(X)$
are polynomial expressions with real coefficients in the entries $x_{ij}$
and their conjugates $\overline{x}_{ij}$.
Moreover, for any of these permutations $I,J$,
the {\it Dieudonn\'e determinant} $\Ddet(X)$ of
$X$ is given by
\be
\label{DieuDet}
\Ddet(X)=||D_{IJ}(X)||\,,
\ee
where $||-||$ denotes the quaternionic norm.

Observe that in our case, $X\in\sss{GL}^0(\mathbf{r}_0;\qH)$, (i.e., $X$
is an invertible $4\times 4$ block matrix with square diagonal blocks,
such that the entries of block $X_{ku}$ are elements of $\qH^{\zg_k+\zg_u}$),
the entries $x_{ij}$ of $X$ and their conjugates coincide (up to sign).
Hence, every $D_{IJ}(X)$ is polynomial in the entries $x_{ij}$.
Moreover, these polynomials are clearly valued in $\R$.
Therefore,
\be \label{DieuDet2}\Ddet(X)=
|D_{IJ}(X)|\,,\ee
where $|-|$ is the absolute value of real numbers. \medskip

For any matrix $X\in\sss{M}^0(\mathbf{r}_0;\qH)$, we obtain the
graded determinant of $X$ by writing the rational
expression
\be\label{QuatGradDet} \det\left|X
\right|_{11}\det\left|X^{1,1} \right|_{22}\det\left|X^{12,12}
\right|_{33}\det X_{44}\;\ee
which is, indeed, a polynomial (see Theorem \ref{thmgdet}, Part (ii)). Let us recall that,
for a matrix $C$ with commutative entries, a quasideterminant is a
ratio of classical determinants \cite{GGRW05}: $$\det
C=(-1)^{a+b}|C|_{ab}\det(C^{a:b})\,.$$ When applying this result
iteratively to the determinants in (\ref{QuatGradDet}), we get $$\det|X^{1\ldots k,1\ldots
k}|_{k+1\;k+1}=\pm\prod_{i=0}^{r_{k+1}-1}\left|\lp|X^{1\ldots
k,1\ldots k}|_{k+1\;k+1}\rp^{1\ldots i:1\ldots
i}\right|_{i+1\;i+1}\,.$$ Corollary \ref{HPAugmented} now entails
that the rational expression (\ref{QuatGradDet}) coincides with $$\pm
|X|_{11}|X^{1:1}|_{22}\ldots x_{NN}=\pm\, D(X)=\pm
|X|_{i_1j_1}|X^{i_1:j_1}|_{i_2j_2}\ldots
x_{i_Nj_N}=\pm\,D_{IJ}(X)\,,$$ see Proposition \ref{Arbitrary},
for any permutations $I=(i_1,\ldots,i_N), J=(j_1,\ldots,j_N)$ of
$(1,\ldots, N)$. However, see Equation (\ref{DieuDet2}), for
appropriate permutations $I,J$, the product $\pm\,D_{IJ}(X)\in\R$
is polynomial and thus coincides with $\gdet(X)$.
\end{proof}

\subsection{Graded Determinant of Even Homogeneous Matrices of Arbitrary Degree}

In this section, $A$ denotes a $(\Z_2)^n_0$-graded commutative
associative unital algebra, such that each subspace
$A^{\zg}$ contains at least one invertible element.
Every Clifford algebra  satisfies the required property
since it is graded division algebra, see Section~\ref{CliS}.\medskip

Consider a homogeneous matrix
$X\in\sss{M}^{\zg}(\mathbf{r}_0;A),$
where $\zg\in(\Z_2)^n_0$ is not necessarily equal to $0$.
Every such matrix can be written (in many different ways) in the form
$X= q\, X_0$, where $X_0$ is homogeneous of degree $0$ and $q \in A$ is invertible.
We define the {\it graded determinant} of $X$
by
\be\label{gdetNonZero}
\gdet(X):= q^{|\mathbf{r}|} \gdet(X_0)\ee
with values in $A^{|\mathbf{r}|\,\zg}$.

Let us first check that the graded determinant is \textit{well-defined}.
Given two invertible elements  $q, q'\in{}A^{\zg}$, one has two different expressions:
$X= (q\I) X_0  = (q'\I) X_0'\,.$
Since
$$ X_0= (q^{-1}\I) (q'\I) X_0'= \lp q^{-1}q'\I\rp X_0'\,, $$
where both factors of the {\small RHS} are of degree 0, we obtain
$$
q^{|\mathbf{r}|}\gdet(X_0)
= q^{|\mathbf{r}|}(q^{-1}q')^{|\mathbf{r}|}\gdet(X_0')=
q'^{|\mathbf{r}|}\gdet(X_0')\,.
$$
Therefore, Formula (\ref{gdetNonZero}) is independent of the choice elements $q$.

\begin{prop}
\label{01Thm}
 The graded determinant (\ref{gdetNonZero}) is multiplicative:
$$\gdet(XY)=\gdet(X)\cdot\gdet(Y)\,.$$
 for any purely even homogeneous $(|\mathbf{r}|\times |\mathbf{r}|)$-matrices $X,Y$,
 if and only if $|\mathbf{r}|=0,1$ $(\op{mod} 4)$.
\end{prop}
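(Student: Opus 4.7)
The plan is to reduce multiplicativity to a scalar sign computation by writing each homogeneous matrix as a scalar times a degree-$0$ matrix. Let $X = q X_0 \in \sss{M}^{\zg}(\mathbf{r}_0; A)$ and $Y = q' Y_0 \in \sss{M}^{\zd}(\mathbf{r}_0; A)$ with $X_0, Y_0$ of degree $0$ and $q, q'$ invertible (possible by the standing hypothesis that each $A^\zg$ contains an invertible element). First I would compute the entry $(XY)_{\alpha\gamma}$ directly from the $A$-module structure (\ref{AmoduleStruc}): this produces sign factors $(-1)^{\la \zg, w_\alpha\ra}$ and $(-1)^{\la \zd, w_\beta\ra}$, and, after commuting $q'$ past the entry $(X_0)_{\alpha\beta}$ of degree $w_\alpha + w_\beta$, an additional factor $(-1)^{\la w_\alpha + w_\beta, \zd\ra}$. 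The two $w_\beta$-dependent signs cancel, leaving exactly the sign $(-1)^{\la \zg+\zd, w_\alpha\ra}$ prescribed by the $A$-module structure, which yields the clean identity
\[ XY = (qq')(X_0 Y_0). \]

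Because $X_0 Y_0$ is again of degree $0$, Theorem \ref{thmgdet} applies and gives $\gdet(X_0 Y_0) = \gdet(X_0)\gdet(Y_0)$. From the definition (\ref{gdetNonZero}) we obtain
\[ \gdet(XY) = (qq')^{|\mathbf{r}|}\gdet(X_0)\gdet(Y_0) \qquad \text{and} \qquad \gdet(X)\gdet(Y) = q^{|\mathbf{r}|}(q')^{|\mathbf{r}|}\gdet(X_0)\gdet(Y_0), \]
where on the right one uses that $\gdet(X_0), \gdet(Y_0) \in A^0$ commute without sign with $(q')^{|\mathbf{r}|}$. The question thus reduces to comparing $(qq')^{|\mathbf{r}|}$ with $q^{|\mathbf{r}|}(q')^{|\mathbf{r}|}$. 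From the graded commutativity relation $q' q = (-1)^{\la \zg, \zd\ra} q q'$, a counting of transpositions gives
\[ (qq')^n = (-1)^{\binom{n}{2}\la \zg, \zd\ra} q^n (q')^n. \]

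Multiplicativity for all admissible $X, Y$ is therefore equivalent to $\binom{|\mathbf{r}|}{2}\la \zg, \zd\ra \equiv 0 \pmod 2$ for every pair $\zg, \zd \in (\Z_2)^n_0$. The elementary congruence $\binom{n}{2} \equiv 0 \pmod 2 \iff n(n-1) \equiv 0 \pmod 4 \iff n \equiv 0, 1 \pmod 4$ yields the sufficiency. For necessity, when $|\mathbf{r}| \equiv 2, 3 \pmod 4$ one must exhibit degrees $\zg, \zd \in (\Z_2)^n_0$ with $\la \zg, \zd\ra = 1$ together with invertible elements of these degrees. For $n \geq 2$ such degrees exist (e.g.\ $\zg = (0,1,1)$ and $\zd = (1,0,1)$ in $(\Z_2)^3_0$, with obvious analogues in larger $n$), and the standing hypothesis supplies invertible $q, q'$. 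Concrete $X, Y$ of these degrees with nonvanishing graded determinants then witness the failure of multiplicativity.

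The delicate point is the first step: one has to verify that the $w_\beta$-dependent signs coming from the $A$-module structure of the matrix product cancel against those coming from the graded commutation of $q'$ past entries of $X_0$. Once the identity $XY = (qq')(X_0 Y_0)$ is established, the remainder of the proof is a short sign-counting argument followed by the elementary arithmetic above.
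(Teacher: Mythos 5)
Your proof is correct and follows essentially the same route as the paper's: write $X=q\,X_0$, $Y=q'\,Y_0$, use $XY=(qq')(X_0Y_0)$ together with multiplicativity of $\gdet$ in degree $0$ (Theorem \ref{thmgdet}) to reduce everything to comparing $(qq')^{|\mathbf{r}|}$ with $q^{|\mathbf{r}|}(q')^{|\mathbf{r}|}$, i.e. to the sign $(-1)^{\binom{|\mathbf{r}|}{2}\la\zg,\zd\ra}$, and then to the congruence $|\mathbf{r}|\equiv 0,1 \pmod 4$. If anything you are more explicit than the paper, which suppresses the factor $\la\zg,\zd\ra$ and the need for witness degrees in the necessity direction; the only slip is the claim that such degrees exist for $n\ge 2$ --- the pairing vanishes identically on $(\Z_2)^2_{\,0}=\{(0,0),(1,1)\}$, so even degrees $\zg,\zd$ with $\la\zg,\zd\ra=1$ (such as your $(0,1,1),(1,0,1)$) first appear for $n\ge 3$.
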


\begin{proof}
Recall that the $A$-module structure (\ref{AmoduleStruc}) of the space
$\sss{M}(\mathbf{r};A)$ is compatible
with the associative algebra structure in the sense that, for any
$a,b\in A$, and matrices $X\in\sss{M}^{x}(\mathbf{r};A)$, and
$Y\in\sss{M}(\mathbf{r};A)$, we have
$$(aX)(bY)=(-1)^{\la\tilde b,x\ra}(ab)(XY)\,.$$

Let $X$ and $Y$ be two purely even graded matrices
of even degree $\zg_{\ell}$ and $\zg_m$, respectively. We then
have
$$
\gdet(X)\gdet(Y)=q_{\ell}^{|\mathbf{r}|}q_m^{|\mathbf{r}|}\gdet(X_0)\gdet(Y_0)
$$
and, since $XY=(q_{\ell}q_m\I)(X_0Y_0)$, we
get
$$\gdet(XY)=(q_{\ell}q_m)^{|\mathbf{r}|}\gdet(X_0)\gdet(Y_0)=
(-1)^{\tiny\frac{|\mathbf{r}|(|\mathbf{r}|-1)}{2}}\,
q_{\ell}^{|\mathbf{r}|}q_m^{|\mathbf{r}|}\,\gdet(X_0)\gdet(Y_0)\,.$$
Therefore, multiplicativity is equivalent to the condition
$(-1)^{\tiny\frac{|\mathbf{r}|(|\mathbf{r}|-1)}{2}}=1$,  that holds if and only if
$|\mathbf{r}|=0,1$ $(\op{mod} 4)$.
\end{proof}

\begin{rem}
 It is
well-known that the classical super determinant can be extended to
odd matrices, only if the numbers $p$ of even and $q$ of odd
dimensions coincide, hence only if the total dimension $|\mathbf{r}|=p+q=0$
$\op{(mod\; 2)}$. Although our situation is not completely
analogous, this can explain that a condition on the total dimension shows up
in our situation.
\end{rem}

\section{Examples of Quaternionic $(\Z_2)^n$-Graded Determinants}\label{ExSect}

In this last section we present several examples of matrices, their
traces and determinants, in the $(\Z_2)^3$-graded case. A natural
source of such matrices is provided by endomorphisms of modules
over the classical algebra $\mathbb{H}$ of quaternions
equipped with the $(\Z_2)^3_0$-grading (\ref{DegH}). Although this section
is based on the general theory developed in the present work, it can be
read independently and might provide some insight into the more abstract aspects of this text.

\subsection{\label{QuatZero}Quaternionic Matrices of Degree Zero}

The examples given in this section are obtained by straightforward
computations that we omit.

\subsubsection{Matrix Dimension $|\mathbf{r}|=4=1+1+1+1$}\label{Ex1Sec}

The first interesting case of $(\Z_2)^3$-graded matrices is that
of dimension $|\mathbf{r}|=4$. More precisely, let $V$ be a real
$4$-dimensional vector space, graded by the even
elements of $(\Z_2)^3\,$:
\begin{equation}
\label{SpaceV}
V=V_{(0,0,0)} \oplus V_{(0,1,1)} \oplus V_{(1,0,1)}
\oplus V_{(1,1,0)}\,.
\end{equation}
Each of the preceding subspaces is $1$-dimensional. We then define a
$(\Z_2)^3$-graded $\mathbb{H}$-module $M=V\otimes_\R\mathbb{H}$. A homogeneous degree $(0,0,0)$ endomorphism of $M$ is then
represented by a matrix of the form
$$
X= \left(
 \begin{array}{l|l|l|l}
x&a\,\qi&b\,\qj&c\,\qk\\[6pt]
 \hline
d\,\qi\Top&y&e\,\qk&f\,\qj\\[6pt]
 \hline
g\,\qj\Top&h\,\qk&z&\ell\,\qi\\[6pt]
 \hline
m\,\qk\Top&n\,\qj&p\,\qi&w
 \end{array}
 \right)\,,
$$

\vspace{2mm}\noindent where the coefficients $x,a,\ldots,w$ are
real numbers and where $\qi,\qj,\qk\in\mathbb{H}$ stand for the
standard basic quaternions.

\label{Ex1111} The graded trace of $X$ is, in the considered situation of a purely even grading and a degree (0,0,0) matrix, just the usual trace $\gtr(X)=x+y+z+w\;.$ The graded determinant is given by
\begin{equation}
\label{ExDet1}
\begin{array}{rcrcrcrcrcrcl}
\G\!\!\det\left(X\right)&=&
        xyzw   & + & xy\ell p & + & xehw  & + & xfhp & -  & xe\ell n  & + & xfzn\\[4pt]
& &   adzw & + & ad\ell p & + & aegw & + & afgp & + & ae\ell m & -  & afzm\\[4pt]
& & -bdhw & + & bd\ell n & + & bygw & + & bfgn  & + & by\ell m & + & bfhm\\[4pt]
& &   cdhp & + & cdzn      & - & cygp   & + & cegn & + & cyzm    &
+ & cehm\,.
\end{array}
\end{equation}

\vspace{2mm}

 The signs look at first sight quite
surprising. However, in this quaternionic degree $0$ case,
$$\left|\,\G\!\!\det\left(X\right)\right|=\mathrm{D}\!\det\left(X\right)\,,$$ where $\Ddet$
denotes the Dieudonn\'e determinant.
Note also that (\ref{ExDet1}) is a particular case of formula (\ref{poly2}).

\subsubsection{Matrix Dimension $|\mathbf{r}|=4=0+2+1+1$}\label{Ex2Sec}

When choosing other dimensions for the homogeneous subspaces of the
$4$-dimensional real vector space $V$, see
(\ref{SpaceV}), namely
$$
V_{(0,0,0)}=0\,, \qquad \dim V_{(0,1,1)}=2\,, \qquad
\dim V_{(1,0,1)}=\dim V_{(1,1,0)}=1\,,
$$
we obtain a different type of matrix representation of degree $(0,0,0)$
endomorphisms of the $\qH$-module $M=V\otimes_{\R}\qH$:
$$
X= \left(
 \begin{array}{ll|l|l}
x&a&b\,\qk&c\,\qj\\[6pt]
d&y&e\,\qk&f\,\qj\\[6pt]
 \hline
g\,\qk\Top&h\,\qk&z&\ell\,\qi\\[6pt]
 \hline
m\,\qj\Top&n\,\qj&p\,\qi&w
 \end{array}
 \right)\,.
$$

\vspace{2mm}The graded determinant of $X$ is then given by
$$
\begin{array}{rcrcrcrcrcrcl}
\G\!\!\det\left(X\right)&=&
        xyzw   & + & xy\ell p & + & xehw  & + & xfhp & -  & xe\ell n  & + & xfzn\\[4pt]
& &  - adzw & - & ad\ell p & - & aegw & - & afgp & + & ae\ell m & -  & afzm\\[4pt]
& & -bdhw & + & bd\ell n & + & bygw & + & bfgn  & - & by\ell m & - & bfhm\\[4pt]
& &  -cdhp & - & cdzn      & + & cygp   & - & cegn & + & cyzm    &
+ & cehm\, .
\end{array}
$$
The signs are of course different from those in
(\ref{ExDet1}). The graded determinant is multiplicative, i.e.,
$$\gdet (XY)=\gdet\left(X\right)\cdot \gdet\left(Y\right)\,,
$$ (this property can be checked by direct computation) and it satisfies the Liouville formula
$$\gdet\left(\exp(\ze X)\right)=\exp\left(\gtr(\ze X)\right)\,,$$ where $\ze$ denotes a degree zero nilpotent parameter.

\subsubsection{Matrix Dimension $|\mathbf{r}|=d+d+d+d$}

In this example, the graded components of the space (\ref{SpaceV})
are of equal dimension $d$. Then, there exists an embedding of the
quaternion algebra $\mathbb{H}$ into the algebra of quaternionic
matrices of homogeneous degree $(0,0,0)$. Indeed, consider
$q=x+a\qi+b\qj+c\qk$ and set
$$
X_q= \left(
 \begin{array}{l|l|l|l}
x\Top&a\qi&b\qj&c\qk\\[6pt]
\hline
a\qi\Top&x&c\qk&b\qj\\[6pt]
 \hline
b\qj\Top&c\qk&x&a\qi\\[6pt]
 \hline
c\qk\Top&b\qj&a\qi&x
 \end{array}
 \right)\,,
 $$
\vspace{2mm}\noindent where the blocks are $(d\times{}d)$-matrices proportional to the
identity.\medskip

In this case, the graded determinant is
$
\gdet(X_q)=||q||^{2d}\,.
$

\subsection{\label{uatNonZero} Homogeneous Quaternionic Matrices of Nonzero Degrees}

In this last subsection, the $(\Z_2)^3$-graded space (\ref{SpaceV}) is of dimension
$$
|\mathbf{r}|=r_1+r_2+r_3+r_4\,,
\qquad
|\mathbf{r}|=0,1\;(\text{mod } 4 )\,,
$$
where the $r_i$ are the dimensions of the four homogeneous subspaces.
Let us emphasize that the condition $|\mathbf{r}|=0,1\;(\text{mod }4 )$ is necessary and sufficient
for consistency.\medskip

\subsubsection{Multiplying by a Scalar.}

If $q$ denotes a nonzero homogeneous quaternion (i.e., it is a nonzero multiple of an element of the standard basis of $\qH$) and if $X$ is a quaternionic matrix of degree $(0,0,0)$,
then
$$
\gdet(q\,X)=q^{|\mathbf{r}|}\,\gdet(X)\,.
$$
Since every homogeneous quaternionic matrix, of any even degree, is of the form $q\,X$, this definition allows
to calculate the determinants from the results of Subsection \ref{QuatZero}.\medskip

Let us emphasize that the multiplication of a graded matrix $X$ by a homogeneous scalar $q$ obeys a nontrivial sign rule.\medskip

(a) For instance, in the case of the decomposition $(1,1,1,1)$, one has

$$\label{SignRule1}
\qi\left(
\begin{array}{c|c|c|c}
1&&&\\
\hline
&1&&\\
\hline
&&1&\\
\hline &&&1
\end{array}
\right)= \left(
\begin{array}{c|c|c|c}
\qi&&&\\
\hline
&\,\qi\,&&\\
\hline
&&\!\!-\qi\!\!&\\
\hline &&&\!\!-\qi\!
\end{array}
\right)\,, \qquad \qj\left(
\begin{array}{c|c|c|c}
1&&&\\
\hline
&1&&\\
\hline
&&1&\\
\hline &&&1
\end{array}
\right)= \left(
\begin{array}{c|c|c|c}
\qj&&&\\
\hline
&\!\!-\qj\!\!&&\\
\hline
&&\,\qj\,&\\
\hline &&&\!\!-\qj\!
\end{array}
\right)\,,
$$
and similarly for $\qk$, with $-$ signs at the second and the third blocks.

(b) For the decomposition $(0,2,1,1)$,

$$\label{SingRule2}\qj\left(
\begin{array}{cc|c|c}
1&&&\\
&1&&\\
\hline
&&1&\\
\hline &&&1
\end{array}
\right)= \left(
\begin{array}{cc|c|c}
\!\!-\qj\!\!&&&\\
&\!\!-\qj\!\!&&\\
\hline
&&\,\qj\,&\\
\hline &&&\!\!-\qj\!
\end{array}
\right)\,,\quad \qk\left(
\begin{array}{cc|c|c}
1&&&\\
&1&&\\
\hline
&&1&\\
\hline &&&1
\end{array}
\right)= \left(
\begin{array}{cc|c|c}
-\qk&&&\\
&\,-\qk\,&&\\
\hline
&&\!\!-\qk\!\!&\\
\hline &&&\qk\!
\end{array}
\right)\,,
$$
and similarly for $\qi$,  with $-$ signs at the third and the fourth blocks.

\subsubsection{An Example in Dimension $|\mathbf{r}|=1+1+2+1$}

Consider the example
$$\qi\,\I=\qi\,\left(
\begin{array}{c|c|cc|c}
1&&&&\\\hline
&1&&&\\
\hline &&1&&\\&&&1&\\ \hline &&&&1
\end{array}
\right)= \left(
\begin{array}{c|c|cc|c}
\qi&&&&\\\hline
&\qi&&&\\
\hline &&-\qi&&\\&&&-\qi&\\ \hline &&&&-\qi
\end{array}
\right)\; \in\sss{M}^{(011)}((1,1,2,1);\qH)\,.$$
According to the definition (\ref{gdetNonZero}), one has: $\gdet(\qi\I)=\qi^5=\qi$.
Applying (heuristically)
Liouville's formula, we find
$$
\label{gdetNonZero02}\gdet(q\,\I)=\gdet(q\,\I_N)=\exp\lp\gtr\lp
\qi{\tiny\frac{\zp}{2}}\;\I_N\rp\rp=\exp\lp
\qi{\tiny}\frac{5\zp}{2}\rp=\qi\,,
$$
in full accordance with the definition.

\subsubsection{The Diagonal Subalgebra $\mathbb{H}$}

The diagonal $(|\mathbf{r}|\times{}|\mathbf{r}|)$-matrices
\begin{equation}
\label{IJK} I= \left(
\begin{array}{rcl}
\qi&&\\
&\ddots&\\
&&\qi
\end{array}
\right)\,, \qquad J= \left(
\begin{array}{rcl}
\qj&&\\
&\ddots&\\
&&\qj
\end{array}
\right)\,, \qquad K= \left(
\begin{array}{rcl}
\qk&&\\
&\ddots&\\
&&\qk
\end{array}
\right)
\end{equation}
are homogeneous of degree $(0,1,1),\;(1,0,1),\;(1,1,0)$,
respectively. These matrices $I,J,K$, together with the identity
matrix, span a subalgebra of the algebra of quaternionic graded matrices, which is isomorphic to $\mathbb{H}$.\medskip

\label{TRIJK} (a) For the matrices \eqref{IJK}, the graded trace is

$$\begin{array}{rcl}
\gtr(I)&=&\left(r_1+r_2-r_3-r_4\right)\qi \,,\\[6pt]
\gtr(J)&=&\left(r_1-r_2+r_3-r_4\right)\qj \,,\\[6pt]
\gtr(K)&=&\left(r_1-r_2-r_3+r_4\right)\qk \,.
\end{array}
$$

\noindent\vspace{3mm} In particular, for the decomposition $|\mathbf{r}|=1+1+1+1$, one obtains
$$
\gtr\left(I\right)=\gtr\left(J\right)=\gtr\left(K\right)=0\,,
$$
while for $|\mathbf{r}|=0+2+1+1$,
$$
\gtr\left(I\right)=0\,,
\qquad \gtr\left(J\right)=-2\qj\,,
\qquad \gtr\left(K\right)=-2\qk\,.
$$

(b) The graded determinant of the matrices \eqref{IJK} is as
follows:
$$
\gdet\left(I\right)=\qi^{\left(r_1+r_2-r_3-r_4\right)}\,, \qquad
\gdet\left(J\right)=\qj^{\left(r_1-r_2+r_3-r_4\right)}\,, \qquad
\gdet\left(K\right)=\qk^{\left(r_1-r_2-r_3+r_4\right)}\,.
$$

For example, if $|\mathbf{r}|=1+1+1+1$, one has
$$\G\!\!\det\left(I\right)=\G\!\!\det\left(J\right)=\G\!\!\det\left(K\right)=1\,.$$
If $|\mathbf{r}|=0+2+1+1$, then
$$
\gdet\left(I\right)=1\,, \qquad
\gdet\left(J\right)=\gdet\left(K\right)=-1\,.
$$

\medskip
\medskip

{\bf Acknowledgments}.
We are pleased to thank Sophie Morier-Genoud, Christian Duval, Dimitri Gurevich and
Hovannes Khudaverdian
for enlightening discussions and Dimitry Leites for valuable comments (in particular, to have made us aware of Nekludova's work).
T.C. thanks the
Luxembourgian NRF for support via AFR grant 2010-1 786207.
The research of N.P. was supported by Grant
GeoAlgPhys 2011-2013 awarded by the University of Luxembourg.

\bigskip



\begin{thebibliography}{Dillo83}

\bibitem[1]{Stu}
E. Study,
{\it Zur Theorie der linearen Gleichungen},
Acta Math. {\bf 42} (1920), 1--61.
\bibitem[2]{Die}
J. Dieudonn\'e,
{\it Les d\'eterminants sur un corps non commutatif},
Bull. Soc. Math. France {\bf 71} (1943), 27--45.
\bibitem[3]{Asl}
H. Aslaksen,
{\it Quaternionic determinants},
Math. Intelligencer {\bf 18} (1996), 57--65.
\bibitem[4]{GRW03}
I. Gelfand, V. Retakh, R. Wilson,
{\it Quaternionic quasideterminants and determinants},
in: Lie groups and symmetric spaces,
Amer. Math. Soc. Transl., Ser. 2, 210, Amer. Math. Soc., Providence, RI, 2003,
111--123.
\bibitem[5]{Lyc}
V. Lychagin,
{\it Colour calculus and colour quantizations},
Acta Appl. Math. {\bf 41} (1995), 193--226.
\bibitem[6]{AM}
H. Albuquerque, S. Majid,
{\it Quasialgebra structure of the octonions}, J. Algebra
{\bf 220} (1999), 188--224.
\bibitem[7]{AM1}
H. Albuquerque, S. Majid,
{\it Clifford algebras obtained by twisting of group algebras},
J. Pure Appl. Algebra {\bf 171} (2002), 133--148.
\bibitem[8]{MO}
S. Morier-Genoud, V. Ovsienko,
{\it Well, Papa, can you multiply triplets?},
Math. Intell.
{\bf 31} (2009), 1--2.
\bibitem[9]{OM}
S. Morier-Genoud, V. Ovsienko,
{\it Simple graded commutative algebras},
J. Alg. {\bf 323} (2010), 1649--1664.
\bibitem[10]{GR07}
J. Grabowski, M. Rotkiewicz, {\it Higher vector bundles and multi-graded symplectic manifolds}, preprint arXiv: 0702772.
\bibitem[11]{GKP11b}
J. Grabowski, A. Kotov, N. Poncin, {\it Geometric structures encoded in the Lie structure of an Atiyah algebroid}, Transformation Groups {\bf 16(1)} (2011), 137--160.
\bibitem[12]{GKP10a}
J. Grabowski, A. Kotov, N. Poncin, {\it Lie superalgebras of differential operators}, preprint arXiv: 1011.1804.
\bibitem[13]{GKP10b}
J. Grabowski, A. Kotov, N. Poncin, {\it The Lie Superalgebra of a Supermanifold}, J. Lie Theo. {\bf 20} (2010), 739--749.
\bibitem[14]{GP04}
J. Grabowski, N. Poncin, {\it Automorphisms of quantum and classical Poisson algebras}, Compositio Math. {\bf 140} (2004), 511--527.
\bibitem[15]{GKP11a} J. Grabowski, D. Khudaverdyan, N. Poncin,
{\it Loday algebroids and their supergeometric interpretation}, preprint arXiv: 1103.5852.
\bibitem[16]{GGRW05}
I. Gelfand, S. Gelfand, V. Retakh, R. Wilson,
{\it Quasideterminants}, Adv. Math. {\bf 193} (2005), 56--141.
\bibitem[17]{KV}
H. Khudaverdian, T. Voronov,
{\it Berezinians, exterior powers and recurrent sequences},
Lett. Math. Phys. {\bf 74} (2005), 201--228.
\bibitem[18]{Sch}
M. Scheunert,
{\it Generalized Lie algebras}, J. Math. Phys. {\bf 20} (1979), 712--720.
\bibitem[19]{ptrzvk}
D.A. Leites, Supermanifold theory,  Karelia Branch of the
USSR Acad. of Sci., Petrozavodsk, 1983, 199 pp.  (in Russian).
\bibitem[20]{SoS} D. Leites (ed.), Seminar on supersymmetry (v.  $1$. Algebra
and Calculus: Main chapters), (J.~Bernstein, D.~Leites, V.~Molotkov,
V.~Shander), MCCME, Moscow, 2011, 410 pp. (in Russian; a version in English
is in preparation but available for perusal).
\bibitem[21]{Lei80}
D. A. Leites,
{\it Introduction to the theory of supermanifolds}, Russian Math. Surveys {\bf 35,1} (1980), 3--57.
\bibitem[22]{VAR} {V.S. Varadarajan},
Supersymmetry for Mathematicians: An Introduction, Courant Lecture Notes, 11,
Amer. Math. Soc., Courant Institute of Mathematical
Sciences, 2004.
\bibitem[23]{OM11}
S. Morier-Genoud, V. Ovsienko,
{\it A Series of Algebras Generalizing the Octonions and Hurwitz-Radon Identity},
Commun. Math. Phys., {\bf 306} (2011).
\bibitem[24]{Olv06}
P. J. Olver, {\it On multivariate interpolation}, {Stud. Appl.
Math.} {\bf 116} ({2006}), 201--240.
\bibitem[25]{GR91}
I. Gelfand, V. Retakh,
{\it Determinants of matrices over noncommutative rings},
Funct. Anal. Appl. {\bf 25} (1991), no. 2, 91--102.
\bibitem[26]{CCF11} C. Carmeli, L. Caston, R. Fioresi, Mathematical Foundations of Supersymmetry, EMS Ser. Lect. Math., European Math. Soc., Zurich, 2011.
\bibitem[27]{DM99}
P. Deligne, J. Morgan, Notes on Supersymmetry (following J. Bernstein), in: Quantum Fields and Strings. A Course for Mathematicians, {1}, AMS, 1999.
\bibitem[28]{Ber87}
F.A. Berezin,
Introduction to Superanalysis, edited by A.A. Kirillov, D. Reidel Publishing Company, Dordrecht (Holland), 1987.
\bibitem[29]{Man88}
Yu. Manin, Gauge field theory and complex geometry, Springer Verlag, Berlin, 1988.





\end{thebibliography}
\end{document}